\def\ElsevierStyle
\newcommand{\citeappx}[1]{}
\newtheorem{proof}{Proof}
\newtheorem{theorem}{Theorem}[section]
\newtheorem{lemma}[theorem]{Lemma}
\newtheorem{corollary}[theorem]{Corollary}
\newtheorem{remark}[theorem]{Remark}
\newcommand{\huvhI}{\huvh^I}
\newcommand{\uvhI}{\uvh^I}
\newcommand{\vvhI}{\vvh^I}
\newcommand{\BvhI}{\Bvh^I}
\newcommand{\CvhI}{\Cvh^I}
\newcommand{\hEshI}{\hEsh^I}
\newcommand{\DshI}{\Dsh^I}
\newcommand{\pshI}{\psh^I}
\newcommand{\qshI}{\qsh^I}
\newcommand{\xivh}{\bm{\xi}_{\hh}}
\newcommand{\etavh}{\bm{\eta}_{\hh}}
\newcommand{\DIVh}{\DIV_{\hh}}
\newcommand{\ROTvh}{\ROTv_{\hh}}
\newcommand{\EOD}{\end{document}}
\newcommand{\TERM}[1]{\textbf{(#1)}}
\newcommand{\DOFS} [2]{$\mathbf{(#1_{#2})}$}
\newcommand{\DG} {DG}
\newcommand{\REAL}  {\mathbbm{R}}
\newcommand{\bv}{\bm{b}}
\renewcommand{\cv}{\bm{c}}
\newcommand{\fv}{\bm{f}}
\newcommand{\gv}{\bm{g}}
\newcommand{\nv}{\bm{n}}
\newcommand{\qv}{\bm{q}}
\newcommand{\tv}{\bm{t}}
\newcommand{\uv}{\bm{u}}
\newcommand{\vv}{\bm{v}}
\newcommand{\wv}{\bm{w}}
\newcommand{\xv}{\bm{x}}
\newcommand{\yv}{\bm{y}}
\newcommand{\Bv}{\bm{B}}
\newcommand{\Cv}{\bm{C}}
\newcommand{\Dv}{\bm{D}}
\newcommand{\Ev}{\bm{E}}
\newcommand{\Jv}{\bm{J}}
\newcommand{\as}{a}
\newcommand{\bs}{b}
\newcommand{\cs}{c}
\newcommand{\fs}{f}
\newcommand{\gs}{g}
\newcommand{\ms}{m}
\newcommand{\ps}{p}
\newcommand{\qs}{q}
\renewcommand{\ss}{s}
\newcommand{\ts}{t}
\newcommand{\us}{u}
\newcommand{\vs}{v}
\newcommand{\ws}{w}
\newcommand{\xs}{x}
\newcommand{\ys}{y}
\newcommand{\Bs}{B} 
\newcommand{\Cs}{C}
\newcommand{\Ds}{D}
\newcommand{\Es}{E}
\newcommand{\Gs}{G}
\newcommand{\Js}{J}
\newcommand{\Ms}{M}
\newcommand{\Ps}{P}
\newcommand{\Ts}{T}
\newcommand{\xsE}{x_{\E}}
\newcommand{\ysE}{y_{\E}}
\newcommand{\xsP}{x_{\P}}
\newcommand{\ysP}{y_{\P}}
\newcommand{\matA}{\mathbbm{A}}
\newcommand{\calD}{\mathcal{D}}
\newcommand{\calE}{\mathcal{E}}
\newcommand{\calG}{\mathcal{G}}
\newcommand{\calH}{\mathcal{H}}
\newcommand{\calI}{\mathcal{I}}
\newcommand{\calK}{\mathcal{K}}
\newcommand{\calL}{\mathcal{L}}
\newcommand{\calP}{\mathcal{P}}
\newcommand{\calR}{\mathcal{R}}
\newcommand{\calS}{\mathcal{S}}
\newcommand{\calT}{\mathcal{T}}
\newcommand{\calV}{\mathcal{V}}
\newcommand{\calX}{\mathcal{X}}
\newcommand{\calEP}{\calE_{\P}}
\newcommand{\xiv}{{\bm\xi}}
\newcommand{\KER}{\textrm{ker}}
\newcommand{\restrict}[2]{{#1}_{|#2}}
\newcommand{\ABS}    [1]{|#1|}
\newcommand{\scal}   [2]{\big (#1,#2\big)}
\newcommand{\Scal}   [2]{\Big (#1,#2\Big)}
\newcommand{\scalP}  [2]{\big (#1,#2\big)_{\P}}
\newcommand{\ScalP}  [2]{\Big (#1,#2\Big)_{\P}}
\newcommand{\scalVh}  [2]{\big (#1,#2\big)_{\Vh}}
\newcommand{\ScalVh}  [2]{\Big (#1,#2\Big)_{\Vh}}
\newcommand{\scalEh}  [2]{\big (#1,#2\big)_{\Eh}}
\newcommand{\ScalEh}  [2]{\Big (#1,#2\Big)_{\Eh}}
\newcommand{\scalTVh}   [2]{\big(#1,#2\big)_{\TVh}}
\newcommand{\scalTVhP}  [2]{\big(#1,#2\big)_{\TVh(\P)}}
\newcommand{\scalTVhNa} [2]{\big[#1,#2\big]_{\TVh}}
\newcommand{\scalTVhPNa}[2]{\big[#1,#2\big]_{\TVh(\P)} }
\newcommand{\ScalTVh}   [2]{\Big(#1,#2\Big)_{\TVh}}
\newcommand{\ScalTVhNa} [2]{\Big[#1,#2\Big]_{\TVh}}
\newcommand{\scalPh}  [2]{\big (#1,#2\big)_{\Ph}}
\newcommand{\ScalPh}  [2]{\Big (#1,#2\Big)_{\Ph}}
\newcommand{\scalVhP} [2]{\big (#1,#2\big)_{\Vh(\P)}}
\newcommand{\scalEhP}  [2]{\big (#1,#2\big)_{\Eh(\P)}}
\newcommand{\nlen}{\hspace{-0.2mm}}
\newcommand{\snorm}  [2]{\big\vert#1\big\vert_{#2}}
\newcommand{\NORM}   [2]{\|#1\|_{#2}}
\newcommand{\norm}   [2]{\big\|#1\big\|_{#2}}
\newcommand{\TNORM}  [2]{\vert\nlen\vert\nlen\vert#1\vert\nlen\vert\nlen\vert_{{}_{#2}}}
\newcommand{\Dt}{\Delta\ts}
\def\trait #1 #2 #3 {\vrule width #1pt height #2pt depth #3pt}
\def\fin{\hfill
        \trait .3 5 0
        \trait 5 .3 0
        \kern-5pt
        \trait 5 5 -4.7
        \trait 0.3 5 0
\medskip}
\newcommand{\ENDPROOF}{\fin}
\newcommand{\BEGINPROOF}{\emph{Proof}.~~}
\newcommand\reallywidehat[1]{%
\savestack{\tmpbox}{\stretchto{%
  \scaleto{%
    \scalerel*[\widthof{\ensuremath{#1}}]{\kern-.6pt\bigwedge\kern-.6pt}%
    {\rule[-\textheight/2]{1ex}{\textheight}}
  }{\textheight}%
}{0.5ex}}%
\stackon[1pt]{#1}{\tmpbox}%
}
\newcommand{\DIV} {\textrm{div}\,}
\newcommand{\ROT} {\textrm{rot}\,}
\newcommand{\ROTv}{\mathbf{rot}\,}
\newcommand{\HONE}  {H^1}
\newcommand{\HONEzr}{H^1_0}
\newcommand{\LTWO}  {L^2}
\newcommand{\LTWOzr}{L^2_0}
\newcommand{\PS}[1] {\mathbbm{P}_{#1}}
\newcommand{\PSv}[1] {\mathbf{P}_{#1}}
\newcommand{\RT} [1]{\textrm{RT}_{#1}}
\newcommand{\RTz}   {\textrm{RT}_{0}}
\newcommand{\CS}[1] {C^{#1}}
\newcommand{\HDIV}   [1]{H  (\textrm{div}; #1)}
\newcommand{\HROT}   [1]{H  (\textrm{rot}; #1)}
\newcommand{\HROTv}  [1]{H  (\textbf{rot}; #1)}
\newcommand{\HROTvzr}[1]{H_0(\textbf{rot}; #1)}
\newcommand{\Honev}  [1]{\left[H^1  (#1)\right]^2}
\newcommand{\Honevzr}[1]{\left[H_0^1  (#1)\right]^2}
\newcommand{\Gperp}  [2]{\calG_{#1}^{\perp}\left(#2\right)}
\newcommand{\Bspace} [1]{\mathbbm{B}\left(#1\right)}
\newcommand{\Gspace} [2]{\calG_{#1}\left(#2\right)}
\newcommand{\Vvh} {\textbf{V}_{\hh}}
\newcommand{\Uvh} {\textbf{U}_{\hh}}
\newcommand{\PirP}{\Pi^{\ROTv}_{\P}}
\newcommand{\PinP}{\Pi^{\nabla}_{\P}}
\newcommand{\PizP}{\Pi^{0}_{\P}}
\renewcommand{\P} {\textsf{P}}            
\newcommand  {\E} {\textsf{e}}            
\newcommand  {\V} {\textsf{v}}            
\newcommand  {\T} {\textsf{T}}            
\newcommand{\Vp}  {\V_{1}}
\newcommand{\Vpp} {\V_{2}}
\newcommand{\hh}{h}
\newcommand{\Th}{\Omega_{\hh}}
\newcommand{\xvP}{\xv_{\P}}        
\newcommand{\xvE}{\xv_{\E}}        
\newcommand{\xvV}{\xv_{\V}}        
\newcommand{\hP}{\hh_{\P}}
\newcommand{\hE}{\hh_{\E}}
\newcommand{\mP}{\ABS{\P}}
\newcommand{\mE}{\ABS{\E}}
\newcommand{\nor}  {\mathbf{n}}
\newcommand{\fsh}{\fs_{\hh}}
\newcommand{\psh}{\ps_{\hh}}
\newcommand{\qsh}{\qs_{\hh}}
\newcommand{\Esh}{\Es_{\hh}}
\newcommand{\hEs}{\widehat{\Es}}
\newcommand{\hEsh}{\widehat{\Es}_{\hh}}
\newcommand{\Dsh}{\Ds_{\hh}}
\newcommand{\gsh}{\gs_{\hh}}
\newcommand{\gvp}{\gv^\perp}
\newcommand{\hJsh}{\widehat{\Js}_{\hh}}
\newcommand{\Jhb}{\Js_{\hh,b}}
\newcommand{\Jsh}{\Js_{\hh}}
\newcommand{\hJs}{\widehat{\Js}}
\newcommand{\zrv}{{\bm 0}}
\newcommand{\uvh}{\uv_{\hh}}
\newcommand{\huvh}{\widehat{\uv}_{\hh}}
\newcommand{\huv}{\widehat{\uv}}
\newcommand{\vvh}{\vv_{\hh}}
\newcommand{\xvh}{\xv_{\hh}}
\newcommand{\yvh}{\yv_{\hh}}
\newcommand{\Bvh}{\Bv_{\hh}}
\newcommand{\Cvh}{\Cv_{\hh}}
\newcommand{\Dvh}{\Dv_{\hh}}
\newcommand{\fvh}{\fv_{\hh}}
\newcommand{\ash}{\as_{\hh}}
\newcommand{\bsh}{\bs_{\hh}}
\newcommand{\ashzr}{\as_{\hh,0}}
\newcommand{\Vh}{\calV_{\hh}}
\newcommand{\Eh}{\calE_{\hh}}
\newcommand{\Ph}{\calP_{\hh}}
\newcommand{\TVh}{\mathcal{TV}_{\hh}}
\newcommand{\TVhzr}{\mathcal{TV}_{\hh,0}}
\newcommand{\Phzr}{\mathcal{P}_{\hh,0}}
\newcommand{\Vhzr}{\calV_{\hh,0}}
\newcommand{\IVh}{\calI^{\Vh}}
\newcommand{\IEh}{\calI^{\Eh}}
\newcommand{\IPh}{\calI^{\Ph}}
\newcommand{\ITVh}{\calI^{\TVh}}
\newcommand{\ITVhP}{\calI^{\TVh}_\P}
\newcommand{\IVhP}{\calI^{\Vh}_{\P}}
\newcommand{\IEhP}{\calI^{\Eh}_{\P}}
\newcommand{\PiVhPLS}{\Pi^{LS}_{\P}}
\newcommand{\PiVhPpw}{\Pi^{pw}_{\P}}
\newcommand{\Pih}{\Pi_h}
\newcommand{\PiEhRT}{\Pi^{RT}}
\newcommand{\PiEhRTP}{\Pi^{RT}_{\P}}
\newcommand{\Xh}{\calX_{\hh}}
\newcommand{\Xhzr}{\calX_{\hh,0}}
\newcommand{\GXh}{\mathfrak{X}_{\hh}}
\newcommand{\GXhzr}{\mathfrak{X}_{\hh,0}}
\newcommand{\CurlNorm}[1]{\TNORM{#1}{\Dt,\ROTv}}
\newcommand{\DivNorm} [1]{\TNORM{#1}{\Dt,\DIV}}
\newcommand{\GradNorm} [1]{\TNORM{#1}{\Dt,\nabla}}
\begin{document}


\begin{frontmatter}
  
  \title{The virtual element method for the coupled system of
    magneto-hydrodynamics}
  
  \author[SNA]{S.~Naranjo~Alvarez}
  \author[VB] {V.~Bokil}
  \author[VG] {V.~Gyrya}
  \author[GM] {and G.~Manzini}
  
  \address[SNA]{
    Department of Mathematics,
    Oregon State University,
    Corvallis, OR 97331
    USA,
    \emph{e-mail: naranjos@math.oregonstate.edu}
  }
  \address[VB]{
    Department of Mathematics,
    Oregon State University,
    Corvallis, OR 97331
    USA,
    \emph{e-mail: bokilv@math.oregonstate.edu}
  }
  \address[VG]{
    Group T-5,
    Theoretical Division,
    Los Alamos National Laboratory,
    Los Alamos, 87545 NM,
    USA;\\
    \emph{e-mail: vitaliy\_gyrya@lanl.gov}
  }
  \address[GM]{
    Group T-5,
    Theoretical Division,
    Los Alamos National Laboratory,
    Los Alamos, 87545 NM,
    USA;\\
    \emph{e-mail: gmanzini@lanl.gov}
  }
  
  \begin{abstract}
    In this work, we review the framework of the Virtual Element
    Method (VEM) for a model in magneto-hydrodynamics (MHD), that
    incorporates a coupling between electromagnetics and fluid flow,
    and allows us to construct novel discretizations for simulating
    realistic phenomenon in MHD.
    First, we study two chains of spaces approximating the
    electromagnetic and fluid flow components of the model.
    Then, we show that this VEM approximation will yield divergence
    free discrete magnetic fields, an important property in any
    simulation in MHD.
    We present a linearization strategy to solve the VEM approximation
    which respects the divergence free condition on the magnetic
    field.
    This linearization will require that, at each non-linear
    iteration, a linear system be solved.
    We study these linear systems and show that they represent
    well-posed saddle point problems.
    We conclude by presenting numerical experiments exploring the
    performance of the VEM applied to the subsystem describing the
    electromagnetics.
    The first set of experiments provide evidence regarding the speed
    of convergence of the method as well as the divergence-free
    condition on the magnetic field.
    In the second set we present a model for magnetic reconnection in
    a mesh that includes a series of hanging nodes, which we use to
    calibrate the resolution of the method.
    The magnetic reconnection phenomenon happens near the center of
    the domain where the mesh resolution is finer and high resolution
    is achieved.
  \end{abstract}
  
\end{frontmatter}



\section{Introduction}\label{Sec:Intro}

The number of applications involving electrically charged and
magnetized fluids, for example plasmas, has ``skyrocketed'' in the last
decades and great efforts have been devoted to the development of
predictive mathematical models.
One approach that has withstood the test of time and has become
``standard'' in the area of plasma physics is the area called
Magneto-HydroDynamics (MHD), which studies the behavior and the magnetic properties of electrically conducting fluids.  
The system of equations that describe MHD are a coupling between an electromagnetic submodel 
and a fluid flow submodel.
The electromagnetic submodel in MHD is normally based on
Maxwell's equations while the fluid flow submodel relies on conservation
principles such as mass and momentum conservation.
These two submodels are nonlinearly coupled.
Indeed, mass density and momentum distribution in a plasma are
determined by the Lorentz force, which, in turn, is generated by 
the same plasma particles moving in the self-consistent
electromagnetic field.
The details of the MHD model, its derivation and properties are
nowadays well-understood and explained in many textbooks and review
papers, e.g.,
\cite{Davidson:2002,Moreau:2013}.

The topic of this chapter is the review of a novel discretization
method for an MHD model, in the framework of the Virtual
Element Method (VEM), that has been recently proposed ~\cite{paper1}.
In the development of this method we will fix the approximation degree.

VEM was originally proposed as a variational reformulation of the
the \emph{nodal} mimetic finite difference (MFD) 
method~\cite{Brezzi-Buffa-Lipnikov:2009,BeiraodaVeiga-Lipnikov-Manzini:2011}
for solving diffusion problems on unstructured polygonal meshes in a
finite element setting.
A survey of the MFD method can be found in the review
paper~\cite{Lipnikov-Manzini-Shashkov:2014} and the research
monograph~\cite{BeiraodaVeiga-Lipnikov-Manzini:2014}.


Solving partial differential equations (PDEs) on polygonal and polyhedral
meshes has become a central and important issue in the last decades.
In fact, the generality of the admissible meshes makes the VEM highly
versatile and very useful when the mesh must be adapted to the
characteristics of the problem.
For example, we can mention problems where the domain boundary deforms
in time, or there are oddly shaped material interfaces to which the
mesh must be conformal, or the mesh needs to be locally refined in
those parts of the domain requiring greater accuracy as in adaptive
mesh refinement strategies.
In all such situations, the mesh refinement process may result in
highly skewed meshes or meshes with highly irregular structures and a
numerical method must be capable of handling these traits in order to
be robust and provide an accurate approximation to the solution of a
partial differential equation.

The VEM inherits the great mesh flexibility of the MFD in a setting
similar to the finite element method (FEM), so that results and
techniques from FEM can be imported over to VEM.
Moreover, VEM makes possible to formulate numerical
approximations of arbitrary order and arbitrary regularity to PDEs in
two and three dimensions on meshes that other methods often consider as pathological. 
Because of its origins, VEM is intimately connected with other finite
element approaches and the fact that VEM is a FEM implies some
important advantages over other discretization methods such as the finite
volume methods and the finite difference methods.
The connection between VEM and finite elements on
polygonal/polyhedral meshes was thoroughly investigated
in~\cite{Manzini-Russo-Sukumar:2014,Cangiani-Manzini-Russo-Sukumar:2015,DiPietro-Droniou-Manzini:2018},
between VEM and discontinuous skeletal gradient discretizations
in~\cite{DiPietro-Droniou-Manzini:2018}, and between VEM and
BEM-based FEM method
in~\cite{Cangiani-Gyrya-Manzini-Sutton:2017:GBC:chbook}.

The main difference between VEM and FEM is that VEM does
not require an explicit knowledge of basis functions that
generates the finite element approximation space.
The formulation of the method and its practical implementations are
based on suitable polynomial projections that are always computable
from a careful choice of the degrees of freedom.
VEM was first proposed for the Poisson
equation~\cite{BeiraodaVeiga-Brezzi-Cangiani-Manzini-Marini-Russo:2013}
and, then, extended to convection-reaction-diffusion problems with
variable coefficients
in~\cite{BeiraodaVeiga-Brezzi-Marini-Russo:2016b}.
The effectiveness of the virtual element approach is reflected in the
many significant applications that have been developed in less than a
decade see, for example,~\cite{%
BeiraodaVeiga-Manzini:2015, Berrone-Pieraccini-Scialo-Vicini:2015,%
Mora-Rivera-Rodriguez:2015,%
Paulino-Gain:2015,%
Antonietti-BeiraodaVeiga-Scacchi-Verani:2016,%
BeiraodaVeiga-Chernov-Mascotto-Russo:2016,%
BeiraodaVeiga-Brezzi-Marini-Russo:2016b,%
Cangiani-Georgoulis-Pryer-Sutton:2016,%
Perugia-Pietra-Russo:2016,%
Wriggers-Rust-Reddy:2016,%
Manzini-Lipnikov-Moulton-Shashkov:2017,%
Certik-Gardini-Manzini-Vacca:2018:ApplMath:journal,%
Dassi-Mascotto:2018,%
Benvenuti-Chiozzi-Manzini-Sukumar:2019:CMAME:journal,%
Antonietti-Manzini-Verani:2019:CAMWA:journal,%
Certik-Gardini-Manzini-Mascotto-Vacca:2020}.
Numerical dispersion can also be greatly reduced on carefully selected
polygonal meshes, see \cite{Joaquim-Scheer:2016,Ding-Yang:2017}.
In these works, the Finite Difference Time Domain (FDTD) method is
applied to a grid of hexagonal prisms and yields much less numerical
dispersion and anisotropy than on using regular hexahedral grids where
such method is normally considered.

Finally, the divergence of the magnetic field is zero in the Maxwell
equations thus reflecting the absence of magnetic monopoles.
Classical numerical discretizations fail to capture this property when the
discrete versions of the divergence and rotational operators do not
annihilate each other at the level of the zero machine precision, thus
leaving a remainder that can significantly be compounded during a
simulation.
The consequence of the violation of this divergence-free constraint
has thoroughly been investigated in the literature and it was seen
that the numerical simulations are prone to significant
errors~\cite{Brackbill-Barnes:1980,Brackbill:1985,Dai-Woodward:1998,Toth:2000},
as fictitious forces and an unphysical behavior may
appear~\cite{Dai-Woodward:1998}.
Efforts have been devoted to the development of \emph{divergence-free} techniques.
For example, in \cite{Dedner-Kemm-Kroner-Munz-Schnitzer-Wesenberg:2002} 
the divergence equation
$\nabla\cdot\Bv=0$ is taken into account through a Lagrange multiplier
that is additionally introduced in the set of the unknowns;
in~\cite{Kuzmin-Klyushnev:2020}, the divergence-free condition relies on
special flux limiters; in~\cite{Jiang:1998} a special energy
functional is minimized by a least squares finite element method.
Instead, the VEM considered in this chapter provides a numerical
approximation of the magnetic field that is intrinsically divergence
free as a consequence of a de~Rham inequality chain.
The VEM described in the papers of References~\cite{%
  BeiraodaVeiga-Brezzi-Dassi-Marini-Russo:2017,
  BeiraodaVeiga-Brezzi-Dassi-Marini-Russo:2018-CMAME,
  BeiraodaVeiga-Brezzi-Dassi-Marini-Russo:2018-SINUM} 
are also pertinent to this issue.

This chapter is structured as follows.
In Section~\ref{sec:MathForms}, we present the system of equations of
the continuous MHD model and introduce its discrete virtual element
approximation.
In Section~\ref{sec:VirtualElements}, we review the formal definition
and properties of the finite dimensional functional spaces of the
formulation of VEM.
Here, we also discuss the computability of the orthogonal projection
operators and the possibility of using oblique projection operators,
which are orthogonal with respect to a different inner product.
In Section~\ref{sec:EnergyEstimate}, we present a number of energy
estimates that provide evidence of the stability of the method.
In Section~\ref{sec:Linearization}, we review a possible linearization
strategy for solving the nonlinear system that results from
virtual element approximation of the MHD model and prove that the
approximate magnetic field is divergence free.
In Section~\ref{sec:WellPosedness}, we discuss the well-posedness of
the linear solver in the setting of saddle-point problems.
In Section~\ref{sec:NumericalExperiments}, we assess the convergence behavior of the method and show an application of the VEM to the numerical modeling of a magnetic reconnection phenomenon.
Finally, in Section~\ref{sec:Conclusions}, we give the full picture about the proposed method we outline.
\section{Mathematical Formulation}
\label{sec:MathForms}
Let the computational domain $\calD$ be an open, bounded, polygonal
subset of $\REAL^3$.
Further assume that there is a magnetized fluid contained in this domain.
We denote by $\overrightarrow{\us}$, $\overrightarrow{\Bs}$, $\overrightarrow{\Es}$ and $\ps$ 
the velocity, magnetic and electric fields and the pressure of such a fluid.
The evolution of these quantities is governed by the following system of differential equations:
\begin{subequations}
  \begin{align}
    \mbox{Incompressibility of the fluid}:     \quad & \nabla\cdot \overrightarrow{\us} = 0,                                                 \\[0.5em]
    \mbox{Conservation of momentum}: \quad & \frac{\partial}{\partial t}\overrightarrow{\us}-R_e^{-1}\Delta\overrightarrow{\us}-\overrightarrow{\Js}\times\overrightarrow{\Bs}+\nabla\ps =\overrightarrow{\fs},  \\[0.5em]
    \mbox{Faraday's Law}:               \quad & \frac{\partial}{\partial t}\overrightarrow{\Bs}+\nabla\times\overrightarrow{\Es} =\overrightarrow{ 0},                              \\[0.5em]
    \mbox{Ohm's Law}:                   \quad & \overrightarrow{\Js} = \overrightarrow{\Es} + \overrightarrow{\us}\times\overrightarrow{\Bs},                                                      \\[0.5em]
    \mbox{Amp{\`e}re's Law}:                \quad & \overrightarrow{\Js} - R_{m}^{-1}\nabla\times\overrightarrow{\Bs} = \overrightarrow{0},                                            
  \end{align}
\end{subequations}
We denote the two dimensional vector whose components are the $x,y$ components of a three dimensional vector using bold.
Thus we denote
\begin{equation*}
    \overrightarrow{\Bs} = \begin{pmatrix}
      \Bv\\ \Bs_z
    \end{pmatrix},\quad 
    \overrightarrow{\Es} = \begin{pmatrix}
      \Ev\\ \Es_z
    \end{pmatrix},\quad
    \overrightarrow{\us} = \begin{pmatrix}
      \uv\\ \us_z
    \end{pmatrix},\quad
    \overrightarrow{\Js} = \begin{pmatrix}
      \Jv\\ \Js_z
    \end{pmatrix},\quad
    \overrightarrow{\fs} = \begin{pmatrix}
      \fv\\ \fs_z
    \end{pmatrix}.
\end{equation*}
In this chapter we will consider that the $z-$component of the magnetic  field is exactly zero.
Moreover, we will also consider that the $x$ and $y$ components of the magnetic and electric fields and the $z-$component of the velocity field do not vary in the $z-$ direction.
In summary, we are assuming that
\begin{equation*}
    \Bs_z = \frac{\partial}{\partial z}\Bv= \frac{\partial}{\partial z}\Ev =\frac{\partial}{\partial z}\us_z= 0.
\end{equation*}
The consequence is that the dynamics only occurs in two dimensions effectively reducing the dimensionality of the problem.
Our goal will be to attain approximations to the $x$ and $y$ components of the electric and magnetic fields.

Consider an arbitrary, non-empty, cross-section parallel to the $x,y-$plane of $\calD$ denoted by $\Omega$. 
The domain $\Omega$ is embedded in $\REAL^2$ so each point $p\in\Omega$ has a fixed $z-$value.
Applying the aforementioned set of assumptions allows us to predict the dynamics in $\Omega$ as being ruled by:
\begin{subequations}\label{eq:MHD all equations}
  \begin{align}
    \mbox{Incompressibility of the fluid}:     \quad & \DIV\uv = 0,             \label{eq:flowStrongConsMass} \\[0.5em]
    \mbox{Conservation of momentum}: \quad & \frac{\partial}{\partial t}\uv-R_e^{-1}\Delta\uv-\Js\times\Bv+\nabla\ps =\fv, \label{eq:flowConsMomentum}   \\[0.5em]
    \mbox{Faraday's Law}:               \quad & \frac{\partial}{\partial t}\Bv+\ROTv\Es ={\bm 0},                            \label{eq:flowStrongFaraday}  \\[0.5em]
    \mbox{Ohm's Law}:                   \quad & \Js = \Es + \uv\times\Bv,                                                    \label{eq:flowStrongOhmsLaw}  \\[0.5em]
    \mbox{Amp{\`e}re's Law}:                \quad & \Js - R_{m}^{-1}\ROT\Bv = {\bm 0},                                             \label{eq:flowStrongAmperesLaw}
  \end{align}
\end{subequations}
Consider a scalar function $\fs:\Omega\to\REAL$ and 
vector functions 
\begin{align*}
    \gv,\fv:\Omega\to\REAL^2, \qquad \text{where} \quad
    \fv = \begin{pmatrix}f_x\\f_y\end{pmatrix}, \quad 
    \gv = \begin{pmatrix}g_x\\g_y\end{pmatrix},
\end{align*}
with the sub-indices denoting components of the vector values functions $\fv$ and $\gv$, rather than differentiation.
We define two versions of a cross product:
\begin{equation}
  \fs\times\gv = \begin{pmatrix}-\fs g_y\\ \ \ \fs g_x\end{pmatrix},\quad 
  \fv\times\gv = f_xg_y-f_yg_x.
\end{equation}
The two dimensional curl and divergence operators are defined as:
\begin{align*}
  \ROTv\Es =
  \begin{pmatrix}
    \displaystyle
    \frac{\partial}{\partial y}\Es\\[1em]
    \displaystyle
    -\frac{\partial}{\partial x}\Es
  \end{pmatrix},
  \quad
  \ROT\Bv
  = \frac{\partial}{\partial x}\Bs_y
  - \frac{\partial}{\partial y}\Bs_x
  \quad\textrm{and}\quad
  \DIV\uv = \frac{\partial}{\partial x}u_x+\frac{\partial}{\partial y} u_y.
\end{align*}
The initial conditions we prescribe onto the system are:
\begin{equation}
  \uv(0) = \uv_0
  \quad\mbox{and}\quad
  \Bv(0) = \Bv_0.
\end{equation}
The initial field $\Bv_0$ must be divergence free, as
Gauss's Law for the magnetic field requires that $\Bv$ remain divergence free throughout its evolution.
Gauss's law, i.e., the divergence free nature of $\Bv$, is not
explicitly stated in the system of MHD equations.
This is due to the fact that, under the assumption that $\Bv_0$ is divergence free, $\DIV\Bv=0$ is a consequence of Faraday's Law, implying that $\Bv$ is solenoidal for all time. We have
\begin{equation}
  \frac{\partial}{\partial t}\left( \DIV\Bv \right)
  = \DIV\left( \frac{\partial}{\partial t}\Bv \right)
  = \DIV\left( -\ROTv\Es \right) = 0.
\end{equation}
Hence, 
\begin{equation}
    \DIV\Bv=\DIV\Bv_0=0.
\end{equation}
Such condition is a further
evidence of the fact that the
magnetic field is divergence
free and the violation of this
condition will lead to a
nonphysical description of
a MHD phenomenon..
We close the MHD system by adding the boundary conditions
\begin{equation}
  \uv = \uv_b
  \quad\mbox{and}\quad
  \Es = \Es_b
  \quad\mbox{on~}\partial\Omega.
  \label{eq:flowbc}
\end{equation}
On using the divergence theorem and the incompressibility condition, we find
that
\begin{equation}
  \int_{\partial\Omega}\uv\cdot\nv\ d\ell = \int_{\Omega}\DIV\uv\ dA = 0,
  \label{eq:flowcondonuv}
\end{equation}
which implies the consistency condition
\begin{equation}
  \int_{\partial\Omega}\uv_b\cdot\nv\ d\ell=0
\end{equation}
on the boundary velocity field $\uv_b$.
\subsection{Weak formulation}

\medskip
In this section, we present a weak formulation of
problem~\eqref{eq:MHD all equations}.
Such a formulation requires the 
definition of the following inner products and norms.

We use standard notation that, for the sake of completeness, we will describe in what follows. For a pair of sufficiently regular real-valued functions $\fs,\gs:\Omega\to\REAL$ 
or vector valued-functions $\fv,\gv:\Omega\to\REAL^2$ 
we define
\begin{equation}
  \scal{\fs}{\gs} = \int_{\Omega} \fs\gs\ dx,\quad
  \scal{\fv}{\gv} = \int_{\Omega} \fv\cdot\gv\ dx.
\end{equation}
We will denote the $\LTWO-$norms by
\begin{subequations}
  \begin{align}
    & \NORM{\fs}{0,\Omega}:=\left(\int_\Omega |\fs|^2\ dx\right)^{1/2},\\
    &\NORM{\fv}{0,\Omega}:=\left(\int_{\Omega}|\fv|^2\ dx\right)^{1/2}.
  \end{align}
\end{subequations}
The spaces $\LTWO(\Omega)$ and $[\LTWO(\Omega)]^2$ will consist of all those scalar and vector functions, respectively, 
that have finite $\LTWO-$norms. 
Likewise, the $H^1-$norm sand the corresponding spaces $H^1(\Omega)$ and $[H^1(\Omega)]^2$
are defined below. We have
\begin{subequations}
  \begin{align}
    &\NORM{\fs}{1,\Omega}:=\left(\NORM{\fs}{0,\Omega}^2+\NORM{\nabla\fs}{0,\Omega}^2\right)^{1/2},\\
    &\NORM{\fv}{1,\Omega}:=\left(\NORM{\fv}{0,\Omega}^2+\NORM{\nabla\fv}{0,\Omega}^2\right)^{1/2}.
  \end{align}
\end{subequations}
The setting, in space, will require the following functional spaces:
\begin{subequations}\label{eq:def:space spaces}
  \begin{align}
    \HONE(\Omega) &= \left\{\vs\in\LTWO(\Omega):\nabla\vs\in[\LTWO(\Omega)]^2\right\},
    \label{eq:H1:def}
    \\[0.5em]
    \HROTv{\Omega} &= \left\{\Ds\in\LTWO(\Omega):
    \ROTv\Ds \in \left[\LTWO(\Omega)\right]^2\right\},
    \label{eq:Hrotv:def}
    \\[0.5em]
    \HDIV{\Omega} &= \left\{
    \Cv\in\left[\LTWO(\Omega)\right]^2:
    \DIV\Cv\in\LTWO(\Omega)
    \right\},
    \label{eq:Hdiv:def}
    \\[0.5em]
    \LTWOzr(\Omega) &= \left\{
    \qs\in\LTWO(\Omega):\int_\Omega \qs\ dA =0
    \right\},
    \label{eq:L20:def}
    \\[0.5em]
    \HONEzr(\Omega) &= 
    \left\{
    \vs\in\HONE:\restrict{\vs}{\partial\Omega} = 0
    \right\},
    \label{eq:H10:def}
    \\[0.5em]
    \HROTvzr{\Omega} &= \left\{\Ds\in\HROTv{\Omega}:
    \restrict{\Ds}{\partial\Omega}=0
    \right\},
    \label{eq:Hrot0:def}
  \end{align}
\end{subequations}
Each of the function spaces in \eqref{eq:def:space spaces} are endowed with its natural norm.
Let us refer to a generic version of the space from \eqref{eq:def:space spaces}
as $S(\Omega)$ and to its natural norm as $\|\cdot\|_S$.
We say that a function  
$f:[0,T]\to S(\Omega)$
is continuous in time if it is continuous with respect to the natural norm $\|\cdot\|_S$.
The space of all time continuous functions $f:[0,T]\to S(\Omega)$ is denoted as 
$C(0,T; S(\Omega))$.
Thus,
%
%
%
%
\begin{subequations}
  \begin{align}   
    &\CS{1}\left(0,T;\Honev{\Omega}\right)=
    \Big\{\vv:[0,T]\to\Honev{\Omega}:
    \vv\mbox{~and~}\frac{\partial\vv}{\partial t}\mbox{~are~continuous}
    \Big\},
    \label{eq:C1-H1v:def}
    \\[0.5em]
    &\CS{}\left(0,T;\LTWOzr(\Omega)\right)=
    \left\{\qs:[0,T]\to\LTWOzr(\Omega):
    \qs\mbox{ is continuous}
    \right\},
    \label{eq:C0-L20:def}
    \\[0.5em]
    & \CS{}\left(0,\Ts,\HROTv{\Omega}\right)
    :=\left\{ \Es:[0,T]\to\HROTv{\Omega}:\Es\mbox{ is continuous}\right\},
    \label{eq:C0-Hrotv:def}
    \\[0.5em]
    &\nonumber\CS{1}\left(0,T;\HDIV{\Omega}\right) = 
    \Big\{ \Bv:[0,T]\to\HDIV{\Omega}:\\
    &\hspace{6cm}\Bv\mbox{ and }\frac{\partial}{\partial t}\Bv\mbox{ are continuous}\Big\}.
    \label{eq:C0-Hdiv:def}
  \end{align}
\end{subequations}
\medskip
\noindent

The weak form of
problem~\eqref{eq:flowStrongConsMass}-\eqref{eq:flowStrongAmperesLaw}
reads as: \\
\emph{Find} \\
\begin{tabular}{l l}
  \hspace{1cm}
  $\uv \in \CS{1}\left(0,T;\Honev{\Omega}\right)$, 
  \hspace{1cm} & 
  $\Bv \in \CS{1}\left(0,T;\HDIV{\Omega}\right)$,
  \\ \hspace{1cm}
  $\Es \in \CS{ }\left(0,T;\HROTvzr{\Omega}\right)$, 
  &
  $\ps \in \CS{ }\left(0,T;\LTWOzr(\Omega)\right)$,
\end{tabular}\\
\emph{such that}
\begin{subequations}
   \label{eq:flowContVarForm}
  \begin{align}
    &\Scal{\frac{\partial}{\partial t}\uv}{\vv}
    +R_e^{-1}\Scal{\nabla\uv}{\nabla\vv}
    -\Scal{\Js\times\Bv}{\vv}
    -\Scal{p}{\DIV\vv} = \Scal{\fv}{\vv},
    \label{eq:flowContVarFormStokes}
    \\[0.5em]
    &\Scal{\DIV\uv}{q} = 0,
    \label{eq:flowContVarFormDivFreeu}
    \\[0.5em]
    &\Scal{\frac{\partial }{\partial t}\Bv}{\Cv}
    +\Scal{\ROTv \Es}{\Cv} = 0,
    \label{eq:flowContVarFormFaraday}
    \\[0.5em]
    &\Scal{\Js}{\Ds}-R_m^{-1}\Scal{\Bv}{\ROTv \Ds} = 
    0,
    \label{eq:ContVarFormAmpereOhm}
    \\[0.5em]
    &
    \Js = \Es+\uv\times\Bv,\quad
    \uv(\cdot,0)=\uv_0,\quad\Bv(\cdot,0) = \Bv_0\mbox{ with }\DIV\Bv_0=0,
  \end{align}
\end{subequations}
\emph{for any}
$\vv \in \Honevzr{\Omega}$, 
$\Cv \in \HDIV{\Omega}$, 
$\Ds \in \HROTvzr{\Omega}$, and 
$\qs \in \LTWOzr(\Omega)$.
In this formulation we are making implicit that 
\begin{equation}
    \uv  = \uv_b\quad\mbox{and}\quad
    \Es = \Es_b
    \quad\mbox{along}\quad 
    \partial\Omega.
\end{equation}\section{The Virtual Element Method}
\label{sec:VirtualElements}

In the VEM formulation, that will be presented, we will define conforming finite-dimensional subspaces of the spaces 
in \eqref{eq:def:space spaces}.
To do this we introduce a mesh of the domain $\Omega$ with size $h>0$.
Then, we define mesh-dependent finite dimensional virtual element spaces:
\begin{subequations}
  \begin{align}
    &\Ph\subset \LTWO(\Omega),\quad \Vh\subset\HROTv{\Omega},\quad
    \Eh\subset\HDIV{\Omega}\quad   \TVh\subset\Honev{\Omega}.\\[0.5em]
    &\Phzr\subset\LTWOzr(\Omega),\quad
    \Vhzr\subset\HROTvzr{\Omega},\quad
    \TVhzr\subset\Honevzr{\Omega},
  \end{align}
\end{subequations}
with the obvious inclusions
\begin{equation}
  \TVhzr\subset\TVh,\quad
  \Vhzr\subset\Vh,\quad
  \Phzr\subset\Ph.
\end{equation}
These spaces will be formally defined in
Subsections~\ref{subsubsec:NodalSpace},\ref{subsec:EdgeSpace},\ref{subsec:CellSpace}
and \ref{subsec:ElementsFluidFlow}, respectively.
We endow these finite dimensional spaces with the inner products
\begin{subequations}
  \begin{align}
    &\scalTVh{\uvh}{\vvh}\approx\scal{\uvh}{\vvh},\quad
    \scalTVhNa{\uvh}{\vvh} \approx
    \scal{\nabla\uvh}{\nabla\vvh},\\[0.5em]
    &\scalEh{\Bvh}{\Cvh}\approx\scal{\Bvh}{\Cvh},\quad\scalVh{\Esh}{\Dsh}\approx\scal{\Esh}{\Dsh},\\[0.5em]
    &\scalPh{\psh}{\qsh}\approx\scal{\psh}{\qsh},
  \end{align}
\end{subequations}
which approximate the corresponding inner products in $\LTWO(\Omega)$
and $\HONE(\Omega)$ and their vector variants.
In the formulation of the method, we will find it convenient to use a
set of local and global interpolation operators embedding the
continuous spaces into their discrete versions.
We denoted the local operators referring to a specific mesh element
$\P$ as $\ITVhP,\IEhP,\IVhP$ and $\IPh$ and the corresponding global
ones $\ITVh,\IEh,\IVh$ and $\IPh$.

\medskip
Having defined the functional spaces the functional spaces , we now
turn our attention to the time variable.
Te begin we introduce a time-step $\Dt>0$ and the time staggering
parameter $0 \le\theta\le 1$.
The approximate solutions will be considered at time steps given by
\begin{equation*}
  t^n = n\Dt,\quad t^{n+\theta}= (n+\theta)\Dt.
\end{equation*}
These approximations are abbreviated by
$\uvh^{n}=\uvh(\cdot,\ts^{n})$,
$\psh^{n+\theta}=\psh(\cdot,\ts^{n+\theta})$,
$\Bvh^{n}=\Bvh(\cdot,\ts^{n})$, and
$\Esh^{n+\theta}=\Esh(\cdot,\ts^{n+\theta})$ for the time-dependent
vector fields $\uvh$ and $\Bvh$ and the scalar fields $\Esh$ and
$\psh$, respectively, approximating $\uv$, $\Bv$, $\Es$ and $\ps$.
These vector fields are the solution of the virtual element method,
which reads as:\\
\emph{ Find $\big\{(\uvh^n,\Bvh^n)\big\}_{n=0}^N\subset\TVh\times\Eh$}
\emph{and
$\big\{(\Esh^{n+\theta},\psh^{n+\theta}))\big\}_{n=0}^{N-1}\subset
\Vh\times\Phzr$}, \emph{such that}
\begin{subequations}
  \label{eq:flowVEMVarForm}
  \begin{align}
    \ScalTVh{\frac{\uvh^{n+1}-\uvh^n}{\Dt}}{\vvh}+
    R_e^{-1}
    \ScalTVhNa{\uvh^{n+\theta}}{\vvh}
    &+\underbrace{\ScalVh{\Jsh^{n+\theta}}{\IVh(\vvh\times\PiEhRT\Bvh^{n+\theta})}}_{\textbf{(*)}}
    \nonumber\\
    \hspace{3cm}
    -\ScalPh{\psh^{n+\theta}}{\DIV\vvh}&=\ScalTVh{\fvh}{\vvh},
    \label{eq:flowVEMVarFormConsMom} 
    \\
    \ScalPh{\DIV\uvh^{n+\theta}}{\qsh}&=0,
    \label{eq:flowVEMVarFormConsMass}
    \\[0.5em]
    \ScalEh{\frac{\Bvh^{n+1}-\Bvh^{n}}{\Dt}}{\Cvh}+
    \ScalEh{\ROTv\Esh^{n+\theta}}{\Cvh}&=0,
    \label{eq:flowVEMVarFormFaraday}
    \\[0.5em]
    \scalVh{\Jsh^{n+\theta}}{\Dsh}
    -
    R_m^{-1}\scalEh{\Bvh^{n+\theta}}{\ROTv\Dsh}
    &=0,
    \label{eq:flowVEMVarFormOhmAmpere}
  \end{align}
\end{subequations}
\emph{for all
$\vvh\in\TVhzr,\Cvh\in\Eh,\Dsh\in\Vhzr$ and $\qsh\in\Phzr$.} 
We define
\begin{align}
  \Jsh^{n+\theta} &:= \Esh^{n+\theta} +\IVh(\uvh^{n+\theta}\times\PiEhRT\Bvh^{n+\theta}),
  \label{eq:flowDiscCurrentDensity}
\end{align}
and the fractional step quantities $\uvh^{n+\theta}$,
$\Bvh^{n+\theta}$ 
through linear (in time) interpolations
\begin{subequations}
  \begin{align}
    \uvh^{n+\theta} &:= (1-\theta)\uvh^n+\theta\uvh^{n+1}, \\
    \Bvh^{n+\theta} &:= (1-\theta)\Bvh^n+\theta\Bvh^{n+1}. 
  \end{align}
\end{subequations}
The initial conditions are given as
\begin{align}
  \uvh^0=\ITVh(\uv_0),\quad\Bvh^0 &= \IEh(\Bv_0)\quad \mbox{ with }\quad \DIV\Bv_0=0.
\end{align}

\medskip
Here, we implicitly assume that for all $t\in [0,T]$:
\begin{align*}
  \quad \Esh(t) = \IVh(\Es_b(t))\quad\mbox{and}\quad
  \uvh(t)= \ITVh(\uv_b(t))\quad\mbox{along}\quad\partial\Omega.
\end{align*}
Here we use $\Es_b$ and $\uv_b$ as the extensions to the continuous boundary conditions to the interior of $\Omega$.

The term labeled as (*) in \eqref{eq:flowVEMVarFormConsMom} is
produced by the approximation
\begin{align*}
  -\Scal{\Js\times\Bv}{\vv}
  =       \Scal{\Js}{\vv\times\Bv}
  \approx \ScalVh{\Jsh}{\IVh(\vvh\times\Bvh)}.
\end{align*}
The reason why we use this discretization will be made clear in
Section~\ref{sec:EnergyEstimate}, where we present the stability
estimates in the $\LTWO(\Omega)$ norm.
It is important to note that when $\theta=0$ the scheme, in time, is a
forward Euler step.
Whereas if $\theta=1$ the scheme is a Backward Euler step.

\subsection{Mesh notation and regularity assumptions}
\label{subsec:MeshAssumtions}
In this subsection, we present the main notation and the regularity
assumptions that we will make on the mesh.

For ease of exposition, we assume that the computational domain
$\Omega$ be an open, bounded, connected subset of $\REAL^{2}$ with
polygonal boundary $\Gamma$.
We consider the family of domain partitionings
$\mathcal{T}=\{\Th\}_{\hh\in\calH}$.
Every partition $\Th$, the \emph{mesh}, is a finite collection of
polygonal elements $\P$, which are such that
$\overline{\Omega}=\cup_{\P\in\Th}\overline{\P}$.

\medskip
For a polygonal element $\P\in\Th$, we denote the boundary of $\P$ by
$\partial\P$, the outward unit normal to the boundary by $\nor_\P$,
its diameter by $\hP=\max_{\xv,\yv\in\P}\ABS{\xv-\yv}$, and its area by $\mP$.
Each elemental boundary $\partial\P$ is formed by a sequence of
one-dimensional non-intersecting straight edges $\E$ with length $\hE$
and midpoint $\xvE=(\xsE,\ysE)^T$.

%
To enforce mesh regularity
we require that there exists $\rho\ge 0$ independent of the mesh size
$h>0$, such that
\begin{itemize}
\item[]\textbf{(M1)}\,: every polygonal cell $\P\in \Th$ is
  star-shaped with respect to every point of some disk of radius
  $\rho\hP$;
\item[]\textbf{(M2)}\,: every edge $\E\in\partial\P$ of cell
  $\P\in\Th$ satisfies $\hE\geq\rho\hP$.
\end{itemize}

The regularity assumptions \textbf{(M1)}-\textbf{(M2)} allow us to use
meshes with cells having quite general geometric shapes.
For example, non-convex cells or cells with hanging nodes on their
edges are admissible.
Nonetheless, these assumptions have some important implications such
as: $(\mathrm{i})$ every polygonal element is \textit{simply
  connected}; $(\mathrm{ii})$ the number of edges of each polygonal
cell in the mesh family $\{\Th\}_{\hh}$ is uniformly bounded;
$(\mathrm{iii})$ a polygonal element cannot have \emph{arbitrarily
small} edges with respect to its diameter $\hP\leq\hh$ for $\hh\to0$
and inequality $\hP^2\leq\Cs(\rho)\mP\hP^2$ holds, with the obvious
dependence of constant $\Cs(\rho)$ on the mesh regularity factor
$\rho$.

\begin{remark}
It is worth mentioning that virtual element methods on polygonal
meshes possibly containing ``small edges'' have been considered
in~\cite{Brenner-Sung:2018} for the numerical approximation of the
Poisson problem.
The work in~\cite{Brenner-Sung:2018} extends the results
in~\cite{BeiraodaVeiga-Lovadina-Russo:2017}
for the original two-dimensional virtual element method to the version
of the virtual element method in
\cite{Ahmad-Alsaedi-Brezzi-Marini-Russo:2013}
that can also be applied to problems in three dimensions, see
\cite{da2017high}.
Finally, we note that assumptions \textbf{(M1)}-\textbf{(M2)} above
also imply that the classical polynomial approximation theory in
Sobolev spaces holds~\cite{Brenner-Scott:2008}.
While these assumptions are the minimal necessary to develop
theoretical analysis, in practice they can be significantly weakened.
\end{remark}
\subsection{The Nodal Space}
\label{subsubsec:NodalSpace}
Consider the cell $\P$ of the polygonal mesh $\Th$.
The formal definition of the nodal elemental space is
\begin{align}
  \Vh(\P):=\Big\{~
  \Dsh\in\HROTv{\P}:
  &
  \restrict{\Dsh}{\partial\P}\in\CS{}(\partial\P),\,
  \Dsh\in\PS{1}(\E)\,\,
  \forall\E\in\partial\P,\,
  \\
  &
  \ROT\ROTv\Dsh=0\textrm{~~in}~\P 
  ~\Big\}.
  \label{eq:Vh:def}
\end{align}
Every function $\Dsh\in\Vh(\P)$ is uniquely determined by the set of
degrees of freedom:
\begin{itemize}
\item[]\TERM{V}{} the vertex values $\Dsh(\V)$ at the nodes $\V$ of
  cell $\P$.
\end{itemize}
These are represented by blue disks centered at the nodes, see the
sample picture in Figure~\ref{fig:VertexDOF}.

\begin{figure}[t]
  \centering
  \includegraphics[scale=0.5]{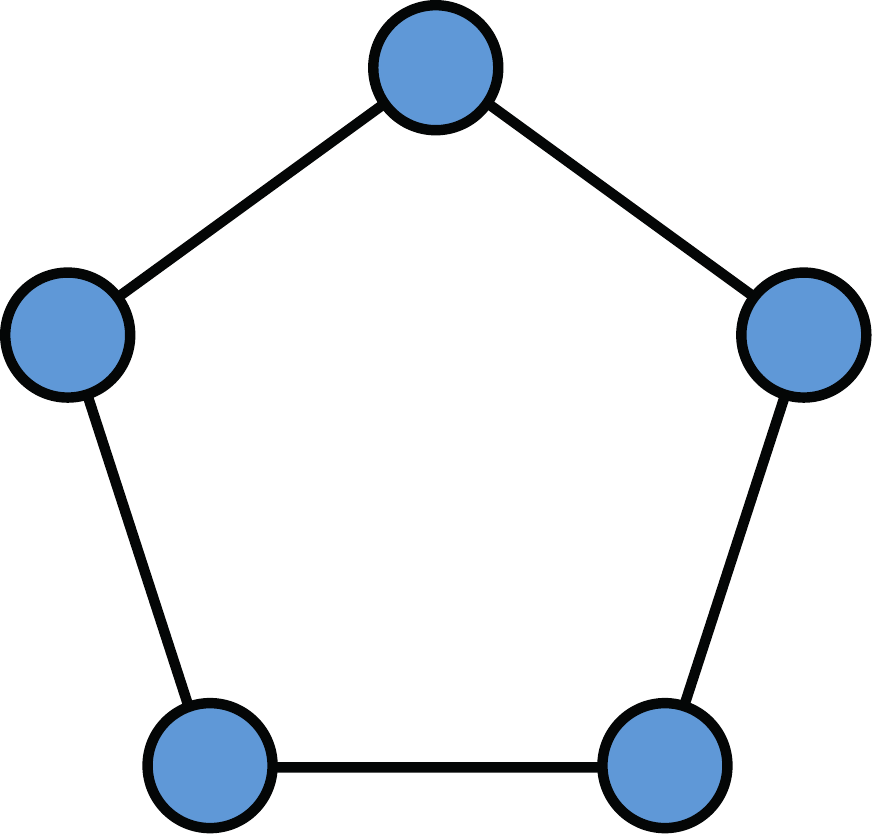}
  \caption{Representation of the degrees of freedom of functions in $\Vh(\P)$.}
  \label{fig:VertexDOF}
\end{figure}
As in the classic finite element method, this property is referred to
as the \emph{unisolvency} of the degrees of freedom \TERM{V}{}, see
\cite{monk2003finite}.
Accordingly, every function in $\Vh(\P)$ corresponds to one and only
one set of degrees of freedom and, conversely, every set of degrees of
freedom corresponds to one and only one function in $\Vh(\P)$.
To formally state this property we define the operators
$\calR_\V:\CS{\infty}(\Omega)\to\REAL^N$ such that for any
$\Ds\in\CS{\infty}(\Omega)$ the image $\calR_\V(\Ds)$ is the array of
degrees of freedom of $\Ds$.
This function can be continuously extended to the full space
$\HROTv{\P}$.
The next theorem states the unisolvency of the finite element previously described.

\medskip
\begin{theorem}\label{Thm:VhUnisolvency}
Define $\calK_\V:\Vh(\P)\to\REAL^N$ as the restriction of $\calR_\V$ to $\Vh(\P)$.
Then, $\calK_\V$ is bijective.
\end{theorem}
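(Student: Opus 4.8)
The plan is to exploit the fact that the differential constraint in the definition of $\Vh(\P)$ is just the Laplace equation in disguise. Expanding the operators with the sign conventions fixed earlier one checks $\ROT\ROTv\Dsh=-\Delta\Dsh$, and for a scalar field $\ROTv\Dsh\in[\LTWO(\P)]^2$ holds if and only if $\nabla\Dsh\in[\LTWO(\P)]^2$; hence $\HROTv{\P}=\HONE(\P)$ for scalars, the boundary trace $\restrict{\Dsh}{\partial\P}$ is well defined, and $\Vh(\P)$ is exactly the space of $\Dsh\in\HONE(\P)$ that are harmonic in $\P$ and whose trace is continuous and affine on each edge of $\partial\P$. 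With this reading, a function in $\Vh(\P)$ is completely determined by its trace on $\partial\P$ together with the harmonic extension into $\P$, and the trace, being continuous and piecewise affine, is itself completely determined by the vertex values $\Dsh(\V)$. So the proof splits into checking injectivity and surjectivity of $\calK_\V$ separately.

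For injectivity I would take $\Dsh\in\Vh(\P)$ with $\calK_\V(\Dsh)=\bm 0$, \ie\ $\Dsh$ vanishing at every vertex of $\P$. On each edge $\E$, $\restrict{\Dsh}{\E}$ is affine and vanishes at both endpoints, hence is identically zero; by continuity of the boundary trace this forces $\restrict{\Dsh}{\partial\P}=0$, so $\Dsh\in\HONEzr(\P)$. Testing $\Delta\Dsh=0$ against $\Dsh$ and integrating by parts over $\P$ then gives $\NORM{\nabla\Dsh}{0,\P}^2=0$, whence $\Dsh$ is constant and, having zero trace, $\Dsh\equiv 0$. Thus $\calK_\V$ is injective.

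For surjectivity, given an array $\bm d=(d_1,\dots,d_N)\in\REAL^N$ indexed by the vertices $\V_1,\dots,\V_N$ of $\P$, I would let $g\in\CS{}(\partial\P)$ be the unique function that is affine on each edge with $g(\V_i)=d_i$, pick any lifting $\widetilde g\in\HONE(\P)$ of $g$, and solve $\int_\P\nabla w\cdot\nabla\vs=-\int_\P\nabla\widetilde g\cdot\nabla\vs$ for $w\in\HONEzr(\P)$ for all $\vs\in\HONEzr(\P)$, which is well posed by Lax--Milgram. Then $\Dsh:=\widetilde g+w\in\HONE(\P)=\HROTv{\P}$ is harmonic, its trace equals $g$ (continuous, affine on each edge), so $\Dsh\in\Vh(\P)$ and $\calK_\V(\Dsh)=\bm d$. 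Together with injectivity this proves that $\calK_\V$ is bijective, and incidentally that $\dim\Vh(\P)=N$.

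The only genuinely non-elementary ingredients are the well-posedness of the harmonic Dirichlet problem on $\P$ and the validity of Green's formula used in the injectivity step; both rest on $\P$ being a bounded Lipschitz (indeed star-shaped, by \textbf{(M1)}) domain, which is classical elliptic theory. Everything else is the trivial observation that an affine function on a segment is pinned down by its two endpoint values, glued together with continuity across the vertices, so I do not expect any real obstacle here.
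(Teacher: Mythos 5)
Your proposal is correct, but note that the paper itself does not actually prove Theorem~\ref{Thm:VhUnisolvency}: its ``proof'' is only a pointer to the reference \cite{BeiraodaVeiga-Brezzi-Marini-Russo:2016a}, so you have supplied the argument that the paper outsources. Your route is essentially the standard one from that literature, and every step checks out: with the paper's sign conventions one indeed has $\ROT\ROTv\Dsh=-\Delta\Dsh$, and for scalar functions $\ROTv\Dsh$ is just the rotated gradient, so $\HROTv{\P}$ coincides with $\HONE(\P)$ and $\Vh(\P)$ is the space of harmonic functions on $\P$ with continuous, edgewise affine trace. Injectivity then follows from the energy identity for the homogeneous Dirichlet problem (the distributional harmonicity gives $\int_\P\nabla\Dsh\cdot\nabla\vs=0$ for all $\vs\in\HONEzr(\P)$, so no boundary-term subtleties arise), and surjectivity from existence of the harmonic extension of the piecewise affine boundary datum via a lifting plus Lax--Milgram; the only point you leave implicit is that a continuous, edgewise affine function on $\partial\P$ does admit an $\HONE(\P)$ lifting, which holds because such a datum lies in $H^{1/2}(\partial\P)$ on the Lipschitz polygon guaranteed by \textbf{(M1)}--\textbf{(M2)}. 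What your write-up buys over the paper is a self-contained proof that also yields $\dim\Vh(\P)=N$ explicitly; what the citation buys the paper is brevity and coverage of the general-order case treated in \cite{BeiraodaVeiga-Brezzi-Marini-Russo:2016a}, of which your argument is the lowest-order instance.
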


\BEGINPROOF
Proof of the above theorem is provided in \cite{BeiraodaVeiga-Brezzi-Marini-Russo:2016a}.
\ENDPROOF

The result of Theorem~\ref{Thm:VhUnisolvency}
Allows us to define the mapping  $\IVh_\P:\HROTv{\P}\to\Vh(\P)$ given by $\IVh_\P = \calK_\V^{-1}\circ\calR_\V$.

We endow the space $\Vh(\P)$ with an $\LTWO$-like inner product.
The usual strategy for the construction of an inner product in a
finite dimensional space relies on the $\LTWO$-orthogonal projection
onto linear polynomials on $\P$, denoted by
$\PizP{}:\Vh(\P)\to\PS{1}(\P)$.
Unfortunately, this projector is not computable in $\Vh(\P)$.
To have a computable orthogonal projection operator we could change
the definition of the space as in the so called 
``enhancement approach''~\cite{BeiraodaVeiga-Brezzi-Marini-Russo:2014,Ahmad-Alsaedi-Brezzi-Marini-Russo:2013,BeiraodaVeiga-Brezzi-Marini-Russo:2016a}.
This strategy will, effectively, change the definition of the space $\Vh(\P)$ calling into question whether or not an important De-Rham complex hold, see Subsection~\ref{subsec:ElectroMagDeRham}.
The reality is that such a diagram holds even in the enhanced scenario, see~\cite{da2018lowest}.
However, we were not aware of this enhanced diagram, instead, we follow a different strategy through a special polynomial
reconstruction operator $\Pi_\P:\Vh(\P)\to\PS{1}(\P)$ satisfying the following
three properties:

\begin{itemize}
\item[]\TERM{P1}{} 
  $\Pi_\P\Dsh$ is computable only from the degrees of freedom of $\Dsh\in \Vh(\P)$;

  \smallskip
\item[]\TERM{P2}{} 
  $\Pi_\P$ preserves all linear polynomials, 
  i.e., for any $\Dsh\in\PS{1}(\P)$, $\Pi_\P\Dsh=\Dsh$;

  \smallskip
\item[]\TERM{P3}{} 
  $\Pi_\P$ is a bounded operator with respect to $L^2$ norm
  with the upper bound constant $\Cs_{\Pi}$ independent of the mesh resolution $\hh$, 
  i.e., for any $\Dsh\in \Vh(\P)$
  \begin{equation}
    \NORM{\Pi_\P\Dsh}{0,\P} \le \Cs_{\Pi}\NORM{\Dsh}{0,\P}.
  \end{equation}
\end{itemize}
We can use this projector to define an inner product in the space $\Vh$ that will allow us to approximate $\LTWO-$inner product as they appear in \eqref{eq:flowVEMVarForm}. In Subsection~\ref{subsubsec:ObliqueProj},  we discuss three possible
implementations of the polynomial reconstruction operator.

We define
\begin{equation}
  \scalVhP{\Esh}{\Dsh} = 
  \scal{\Pi_\P\Esh}{\Pi_\P\Dsh} + 
  \calS^\V\big( (1-\Pi_\P)\Esh, (1-\Pi_\P)\Dsh \big),
  \label{eq:Vh(P)InProdDef}
\end{equation}
where $\calS^\V$ is the stabilization bilinear form.
According to the standard VEM construction, $\calS^\V$ can be
\emph{any} bilinear form for which there exist two real constants
$s_*$ and $s^*$ independent of $\hh$ such that 
\begin{align}
  s_*\NORM{\Dsh}{0,\P}^2
  \leq\calS^\V(\Dsh,\Dsh)\leq
  s^*\NORM{\Dsh}{0,\P}^2
  \quad\forall\Dsh\in\ker\Pi\cap\Vh(\P).
  \label{eq:Vhsstardefs}
\end{align}
In practice, we can design the stabilization as
in~\cite{Mascotto:2018,Dassi-Mascotto:2018}.
This inner product defines the norm in $\Vh(\P)$ given by
$\TNORM{\Dsh}{\Vh(\P)}^2=\scalVhP{\Dsh}{\Dsh}$.
When the projection operator $\Pi$ 
satisfies properties \TERM{P1}{}-\TERM{P3}{}, 
the inner product~\eqref{eq:Vh(P)InProdDef} satisfies two fundamental
properties summarized in the following theorem.

\begin{theorem}
  \label{Thm:EquivalentInProdsVh(P)}
  The inner product $\scalVhP{\,\cdot\,}{\,\cdot\,}$ defined
  in~\eqref{eq:Vh(P)InProdDef} satisfies
  \begin{itemize}
  \item\textbf{Linear consistency}:
    \begin{align}
      \scalVhP{\ps}{\qs} = \scal{\ps}{\qs}
      \quad\forall\ps,\qs\in\PS{1}(\P)\subset\Vh(\P).
      \label{eq:Vh(P)accuracy}
    \end{align}
  \item\textbf{Stability}: there exists two real constants $\alpha_*$
    and $\alpha^*>0$ independent of $\hh$ and $\P$ such that
    \begin{align}
      \alpha_*\NORM{\Dsh}{0,\P}^2
      \leq\TNORM{\Dsh}{\Vh(\P)}^2\leq
      \alpha^*\NORM{\Dsh}{0,\P}^2.
      \quad\forall \Dsh\in\Vh(\P)^2.
      \label{eq:Vh(P)stability}
    \end{align}
  \end{itemize}
\end{theorem}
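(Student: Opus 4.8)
The plan is to obtain both assertions directly from the structural properties \TERM{P1}{}--\TERM{P3}{} of the reconstruction operator $\Pi_\P$ together with the stabilization bounds~\eqref{eq:Vhsstardefs}. The one preliminary fact I would record first is that $\Pi_\P$ is a \emph{projection} onto $\PS{1}(\P)$: indeed $\Pi_\P$ maps $\Vh(\P)$ into $\PS{1}(\P)$, and by \TERM{P2}{} it is the identity there, so $\Pi_\P^2\Dsh=\Pi_\P(\Pi_\P\Dsh)=\Pi_\P\Dsh$. Hence $(1-\Pi_\P)\Dsh\in\ker\Pi_\P\cap\Vh(\P)$ for every $\Dsh\in\Vh(\P)$, which is precisely the class of functions to which~\eqref{eq:Vhsstardefs} applies; this is what makes the stabilization term controllable in both directions.

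For the \emph{linear consistency} part, take $\ps,\qs\in\PS{1}(\P)$. By \TERM{P2}{} we have $\Pi_\P\ps=\ps$ and $\Pi_\P\qs=\qs$, so $(1-\Pi_\P)\ps=(1-\Pi_\P)\qs=0$ and the stabilization contribution in~\eqref{eq:Vh(P)InProdDef} vanishes. What remains is $\scalVhP{\ps}{\qs}=\scal{\Pi_\P\ps}{\Pi_\P\qs}=\scal{\ps}{\qs}$, which is exactly~\eqref{eq:Vh(P)accuracy}.

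For the \emph{stability} part I would split $\Dsh=\Pi_\P\Dsh+(1-\Pi_\P)\Dsh$ and estimate the two pieces. For the upper bound, setting $\Esh=\Dsh$ in~\eqref{eq:Vh(P)InProdDef}, the right-hand inequality of~\eqref{eq:Vhsstardefs} gives
\begin{equation*}
  \TNORM{\Dsh}{\Vh(\P)}^2
  \le \NORM{\Pi_\P\Dsh}{0,\P}^2 + s^*\NORM{(1-\Pi_\P)\Dsh}{0,\P}^2 ,
\end{equation*}
and then \TERM{P3}{} together with the triangle inequality ($\NORM{(1-\Pi_\P)\Dsh}{0,\P}\le(1+\Cs_{\Pi})\NORM{\Dsh}{0,\P}$) yields the right inequality of~\eqref{eq:Vh(P)stability} with $\alpha^*=\Cs_{\Pi}^2+s^*(1+\Cs_{\Pi})^2$. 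For the lower bound, note that $\NORM{\Pi_\P\Dsh}{0,\P}^2$ is one of the two nonnegative summands of $\TNORM{\Dsh}{\Vh(\P)}^2$, so $\NORM{\Pi_\P\Dsh}{0,\P}\le\TNORM{\Dsh}{\Vh(\P)}$, while the left-hand inequality of~\eqref{eq:Vhsstardefs} gives $\NORM{(1-\Pi_\P)\Dsh}{0,\P}^2\le s_*^{-1}\calS^\V\big((1-\Pi_\P)\Dsh,(1-\Pi_\P)\Dsh\big)\le s_*^{-1}\TNORM{\Dsh}{\Vh(\P)}^2$; combining these via the triangle inequality and $(a+b)^2\le 2a^2+2b^2$ gives $\NORM{\Dsh}{0,\P}^2\le 2(1+s_*^{-1})\TNORM{\Dsh}{\Vh(\P)}^2$, i.e.\ the left inequality of~\eqref{eq:Vh(P)stability} with $\alpha_*=\big(2(1+s_*^{-1})\big)^{-1}$.

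Since $\Cs_{\Pi}$, $s_*$ and $s^*$ are, by the hypotheses \TERM{P3}{} and~\eqref{eq:Vhsstardefs} (and the mesh-regularity assumptions \textbf{(M1)}--\textbf{(M2)} that make those bounds uniform over the mesh family), independent of $\hh$ and of $\P$, so are $\alpha_*$ and $\alpha^*$. I do not expect any serious obstacle here: the only point that is not purely mechanical is recognizing that $\Pi_\P$ is idempotent, so that $(1-\Pi_\P)\Dsh$ belongs to $\ker\Pi_\P\cap\Vh(\P)$ and~\eqref{eq:Vhsstardefs} may legitimately be invoked; after that, both bounds follow from the triangle inequality and the boundedness estimate \TERM{P3}{}.
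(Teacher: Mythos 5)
Your proof is correct and follows essentially the same route as the paper's: linear consistency from \TERM{P2}{} making the stabilization term vanish, and stability from the splitting $\Dsh=\Pi_\P\Dsh+(1-\Pi_\P)\Dsh$ combined with the triangle inequality, the bounds in~\eqref{eq:Vhsstardefs} and \TERM{P3}{}. Your explicit observation that $\Pi_\P$ is idempotent, so that $(1-\Pi_\P)\Dsh\in\ker\Pi_\P\cap\Vh(\P)$ and~\eqref{eq:Vhsstardefs} may legitimately be invoked, is a point the paper leaves implicit, and your constants $\alpha_*=\big(2(1+s_*^{-1})\big)^{-1}$ and $\alpha^*=\Cs_{\Pi}^2+s^*(1+\Cs_{\Pi})^2$ are correct (the paper's factor $2\max(1,s^*)$ in the lower bound should in fact read $2\max(1,s_*^{-1})$).
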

\BEGINPROOF
  To prove the \emph{linear consistency}~\eqref{eq:Vh(P)accuracy},
  take two polynomial functions $\ps,\qs\in\PS{1}(\P)$.
  Property \TERM{P2}{} implies that $\Pi\ps=\ps$ and $\Pi\qs=\qs$.
  So, the stabilization term in~\eqref{eq:Vh(P)InProdDef} is zero, and
  we find that
  \begin{align*}
    \scalVhP{\ps}{\qs} = \scalP{\Pi_\P\ps}{\Pi_\P\qs} = \scalP{\ps}{\qs}.
  \end{align*}
  
  To prove the lower bound of the \emph{stability}
  condition~\eqref{eq:Vh(P)stability}, we add and subtract
  $\Pi_\P\Dsh$, apply the triangular inequality, the left-most
  inequality in~\eqref{eq:Vhsstardefs} and note that 
  \begin{align*}
    \NORM{\Dsh}{0,\P}^2
    &\leq \big(\NORM{\Pi_\P\Dsh}{0,\P}+\NORM{(1-\Pi_\P)\Dsh}{0,\P}\big)^2
    \leq 2\big(\NORM{\Pi_\P\vvh}{0,\P}^2 + \NORM{(1-\Pi_\P)\Dsh}{0,\P}^2 \big)
    \\[0.5em]
    &
    \leq 2\max(1,s^*)\Big( \scalP{\Pi_\P\Dsh}{\Pi_\P\Dsh} + \calS^{\V}\big( \big(1-\Pi_\P\big)\Dsh, \big(1-\Pi_\P\big)\Dsh \big) \Big)
    \\[0.5em]
    &
    = (\alpha_*)^{-1}\scalVhP{\Dsh}{\Dsh}
    = (\alpha_*)^{-1}\TNORM{\Dsh}{\Vh(P)}^2,
  \end{align*}
  where the lower bound $\alpha_*$ in \eqref{eq:Vh(P)stability} is given by $\alpha_*^{-1} = 2\max(1,s^*)$.
  
  To obtain the upper bound in \eqref{eq:Vh(P)stability}, we first
  note that property \TERM{P3}{} implies that:
  \begin{align*}
    &\calS^\V\big( (1-\Pi_\P)\Dsh, (1-\Pi_\P)\Dsh \big)
    \leq s^*\NORM{(1-\Pi_\P)\Dsh}{0,\P}^2
    \leq s^*\big( \NORM{\Dsh}{0,\P} + \NORM{\Pi_\P\Dsh}{0,\P} \big)^2
    \\[0.5em]
    &\qquad
    \leq s^*\big( \NORM{\Dsh}{0,\P} + \Cs_{\Pi}\NORM{\Dsh}{0,\P} \big)^2
    \leq s^*(1+\Cs_{\Pi})^2\NORM{\Dsh}{0,\P}^2.
  \end{align*}
To conclude this theorem we use
  \begin{align*}
    \TNORM{\Dsh}{\Vh(\P)}^2
    &=    \scalP{\Pi_\P\Dsh}{\Pi_\P\Dsh} + \calS^\V\big( (1-\Pi_\P)\Dsh, (1-\Pi_\P)\Dsh \big)
    \\[0.5em]
    &
    \leq \Cs_{\Pi}^{2}\NORM{\Dsh}{0,\P}^2 + s^*(1+\Cs_{\Pi})^2\NORM{\Dsh}{0,\P}^2
    \leq \alpha^*\NORM{\Dsh}{0,\P}^2,
  \end{align*}
  where the upper bound $\alpha^*$ in \eqref{eq:Vh(P)stability} is given by $\alpha^*=\max(\Cs_{\Pi}^2,s^*(1+\Cs_{\Pi})^2)$.
  
  Note that both lower and upper bounds $\alpha_*$ and $\alpha^*$ 
  depend only on the upper bound $\Cs_{\Pi}$ of the projection operator $\Pi$ and are independent of mesh resolution $\hh$.
 \ENDPROOF

The global space $\Vh$ is the subset of functions in $\HROTv{\Omega}$
whose restriction to any element $\P\in\Th$ belongs to $\Vh(\P)$.
Formally, we write that 
\begin{equation}
  \Vh =
  \left\{ 
  \Dsh\in\HROTv{\Omega}:
  \forall\P\in\Omega_h\quad
  \restrict{\Dsh}{\P} \in\Vh(\P)
  \right\}.
\end{equation}
We endow the global space $\Vh$ with
the global inner product
\begin{equation}
  \scalVh{\Esh}{\Dsh}
  = \sum_{\P\in\Omega_h}\scalVhP{\restrict{\Esh}{\P}}{\restrict{\Dsh}{\P}}
  \quad\forall\Esh,\Dsh\in\Vh,
  \label{eq:VhGlobalInProd}
\end{equation}
and the global norm $\NORM{\Dsh}{\Vh}^2=\scal{\Dsh}{\Dsh}$
The global inner product inherits the properties of accuracy and
stability from the elemental inner product that are stated in
Theorem~\ref{Thm:EquivalentInProdsVh(P)}.
\begin{corollary}
  \label{Coro:EquivalentInProdsVh(P)}
  The inner product $\scalVh{\,\cdot\,}{\,\cdot\,}$ defined
  in~\eqref{eq:VhGlobalInProd} has the two properties:
  \begin{itemize}
  \item\textbf{Linear consistency}:
    \begin{align}
      \scalVh{\ps}{\qs} = \scal{\ps}{\qs}
      \quad\forall\ps,\qs\in\PS{1}(\Th)\subset\Vh(\P).
      \label{eq:Vh(P)accuracy global}
    \end{align}
  \item\textbf{Stability}: there exists two real constants $\alpha_*$
    and $\alpha^*>0$ independent of $\hh$ and $\P$ such that
    \begin{align}
      \alpha_*\NORM{\Dsh}{0}^2
      \leq\TNORM{\Dsh}{\Vh(\P)}^2\leq
      \alpha^*\NORM{\Dsh}{0}^2
      \quad\forall \Dsh\in\Vh(\P)^2,
      \label{eq:Vh(P)stability global}
    \end{align}
  \end{itemize}
where the lower and upper bound constants $\alpha_*$ and $\alpha^*>0$ 
are the same constants introduced in~Theorem~\ref{Thm:EquivalentInProdsVh(P)}
and $\PS{1}(\Th)$ is the space of piecewise linear polynomials built on the mesh $\Th$.
\end{corollary}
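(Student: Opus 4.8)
The plan is to obtain both properties of the global inner product directly from their elemental counterparts in Theorem~\ref{Thm:EquivalentInProdsVh(P)}, by summing over the mesh elements. The two facts that make this work are the additivity of the $\LTWO$ inner product and norm over the partition $\Th$, and — crucially — that the elemental constants $\alpha_*,\alpha^*$ furnished by Theorem~\ref{Thm:EquivalentInProdsVh(P)} depend only on $\Cs_{\Pi}$ and the stabilization bounds in \eqref{eq:Vhsstardefs}, hence are independent of $\P$ and of $\hh$.

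For the linear consistency, I would take $\ps,\qs\in\PS{1}(\Th)\subset\Vh$ and note that on every element $\P\in\Th$ the restrictions $\restrict{\ps}{\P},\restrict{\qs}{\P}$ are linear polynomials on $\P$, so they lie in $\PS{1}(\P)\subset\Vh(\P)$. Applying the elemental identity \eqref{eq:Vh(P)accuracy} on each $\P$ gives $\scalVhP{\restrict{\ps}{\P}}{\restrict{\qs}{\P}}=\scalP{\restrict{\ps}{\P}}{\restrict{\qs}{\P}}$; summing over $\P\in\Th$, using the definition \eqref{eq:VhGlobalInProd} of $\scalVh{\cdot}{\cdot}$ on the left and $\scal{\ps}{\qs}=\sum_{\P\in\Th}\scalP{\restrict{\ps}{\P}}{\restrict{\qs}{\P}}$ on the right, yields \eqref{eq:Vh(P)accuracy global}.

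For the stability bounds, I would write $\TNORM{\Dsh}{\Vh}^2=\scalVh{\Dsh}{\Dsh}=\sum_{\P\in\Th}\scalVhP{\restrict{\Dsh}{\P}}{\restrict{\Dsh}{\P}}=\sum_{\P\in\Th}\TNORM{\restrict{\Dsh}{\P}}{\Vh(\P)}^2$, apply the elemental two-sided estimate \eqref{eq:Vh(P)stability} term by term, and use $\NORM{\Dsh}{0}^2=\sum_{\P\in\Th}\NORM{\restrict{\Dsh}{\P}}{0,\P}^2$, which holds because $\Th$ partitions $\Omega$ up to a set of measure zero. Pulling the element‑independent constants $\alpha_*$ and $\alpha^*$ out of the sums then gives \eqref{eq:Vh(P)stability global} with exactly the same constants as in Theorem~\ref{Thm:EquivalentInProdsVh(P)}.

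There is no genuine obstacle here: the argument is a routine telescoping over elements. The only point worth stressing — and the reason the corollary is worth stating separately — is the uniformity of the elemental constants: were $\alpha_*,\alpha^*$ to depend on $\P$ or to degenerate as $\hh\to0$, the summation would not preserve a clean two‑sided spectral equivalence. Since Theorem~\ref{Thm:EquivalentInProdsVh(P)} (through properties \TERM{P1}{}--\TERM{P3}{}) guarantees that dependence only through $\Cs_{\Pi}$, this issue does not arise and the proof is complete.
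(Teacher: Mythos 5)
Your proposal is correct and is exactly the argument the paper intends: the paper's proof simply states that the corollary follows immediately from Theorem~\ref{Thm:EquivalentInProdsVh(P)}, and your element-wise summation, using the additivity of the $\LTWO$ quantities over $\Th$ and the $\P$- and $\hh$-independence of $\alpha_*,\alpha^*$, is the routine way to make that "immediately" precise. Nothing further is needed.
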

\BEGINPROOF
Proof of this Corollary follows immediately from the results of Theorem~\\ref{Thm:EquivalentInProdsVh(P)}.
\ENDPROOF

Finally, we introduce the \emph{global interpolation operator}
$\IVh:\HROTv{\Omega}\to\Vh$, whose restriction to any cell coincides
with the elemental interpolation operator:
\begin{equation}
  \restrict{\IVh(\Dsh)}{\P} =
  \IVhP(\restrict{\Dsh}{\P})
  \quad
  \forall\Dsh\in\Vh\,\P\in\Th.
\end{equation}

\subsubsection{The polynomial reconstruction operators}
\label{subsubsec:ObliqueProj}

We discuss here three alternative choices for the oblique projections that can be used to approximate inner products in the space $\Vh(\P)$.

\medskip
\noindent
\textbf{(I)~Elliptic Projection operator.}
We denote this projection operator as 
\begin{equation}
\PirP{}:\Vh(\P)\to\PS{1}(\Omega).
\end{equation}
For $\Dsh\in\Vh(\P)$ the elliptic projection operator is the solution
to the variational problem
\begin{subequations}
  \begin{align}
    \int_{\P}\ROTv\left(\Dsh-\PirP\Dsh\right)\cdot\ROTv\qs\ dA &= 0
    \quad\forall\qs\in\PS{1}(\P),
    \label{eq:VarFormPirP:A}
    \\[0.5em]
    \Ps_0\big(\Dsh-\PirP\Dsh\big) &= 0,    
    \label{eq:VarFormPirP:B}
  \end{align}
\end{subequations}
where we use the additional projector
$\Ps_0(\Dsh)=\sum_{\V\in\partial\P}\Dsh(\V)$ on $\PS{0}(\P)$ to remove
the kernel of operator $\ROTv$.
The linear polynomial $\PirP{}\Dsh$ is computable because the integral
quantities
\begin{align}
  \quad\int_\P \ROTv\Dsh\cdot\ROTv\qs\ dA
  \quad\forall\qs\in\PS{1}(\P),
\end{align}
are computable from the degrees of freedom \TERM{V} of $\Dsh$.
To prove this statement we use the Green's theorem, note that
$\ROT\ROTv\qs=0$, since $\qs\in\PS{1}(\P)$, and split the integral on
$\partial\P$ in the summation of edge integrals to obtain
\begin{align*}
  \int_{\P}\ROTv\Dsh\cdot\ROTv\qs\ dA
  &= \int_{\P}\Dsh\ROT\ROTv\qs\ dA
  + \int_{\partial\P}\Dsh\tv\cdot\ROTv\qs\ d\ell
  \\
  &= \sum_{\E\in\partial\P}\int_{\E}\Dsh\tv\cdot\ROTv\qs\ d\ell,
\end{align*}
where $\tv$ is the unit tangent vector parallel to $\E$.
The edge integrals are computable because $\tv\cdot\ROTv\qs$ is a
known function in $\PS{0}(\E)$ and we can interpolate the trace
$\restrict{\Dsh}{\E}\in\PS{1}(\E)$ using the evaluation of $\Dsh$ at
the vertices of edge $\E$, which are known from the degrees of freedom
\TERM{V}{}.

\medskip
\textbf{(II)~Least Squares polynomial reconstruction operator.}
The second reconstruction operator that we consider is denoted as
\begin{equation}
    \PiVhPLS:\Vh(\P)\to\PS{1}(\Omega).
\end{equation}
For a function $\Dsh\in\Vh(\P)$, the linear polynomial $\PiVhPLS\Dsh$
is the solution of the Least Squares problem
\begin{align*}
  \PiVhPLS\Dsh(\xv) := \textrm{argmin}_{\qs\in\PS{1}(\P)} \sum_{\V\in\partial\P}\ABS{\Dsh(\xvV)-\qs(\xvV)}^2.
\end{align*}
The solution to this problem has a closed form that can be easily
written as follows.
Let $\big\{\ms_1,\ms_2,\ms_3\big\}$ be the scaled monomial basis of
$\PS{1}(\P)$, which is given by:
\begin{align*}
  \ms_1(\xs,\ys) = 1,\quad
  \ms_2(\xs,\ys) = \frac{\xs-\xsP}{\hP},
  \quad\mbox{and}\quad
  \ms_3(\xs,\ys) = \frac{\ys-\ysP}{\hP},
\end{align*}
where $\xvP=(\xsP,\ysP)^T$ is the position vector of the barycenter of
$\P$.
Let 
\begin{align*}
  \PiVhPLS\Dsh(\xs,\ys) = \as\ms_1(\xs,\ys) + \bs\ms_2(\xs,\ys) + \cs\ms_3(\xs,\ys).
\end{align*}
We denote the position vector of the $i$-th vertex $\V_i$ by $\xv_i$,
for $i=1,\ldots,N$, where $N$ is the number of vertices of $\P$.
Then, the coefficient vector $\xiv=(\as,\bs,\cs)^T$ is the solution of
the system
\begin{equation}
  \matA\xiv=\bv
  \textrm{~~with~~}
  \matA
  = \begin{pmatrix}
    \ms_1(\xv_1) & \ms_2(\xv_1) & \ms_3(\xv_1) \\
    \ms_1(\xv_2) & \ms_2(\xv_2) & \ms_3(\xv_2) \\
    \vdots      & \vdots       & \vdots      \\
    \ms_1(\xv_N) & \ms_2(\xv_N) & \ms_3(\xv_N) \\
  \end{pmatrix},
  \quad
  \bv =
  \begin{pmatrix}
    \Dsh(\xv_1)\\ \Dsh(\xv_2)\\ \vdots \\ \Dsh(\xv_N)
  \end{pmatrix}.
\end{equation}
Since $\matA$ is a maximum rank matrix, the array of the solution
coefficients is given by $\xiv=(\matA^T\matA)^{-1}\matA^T\bv$.

\medskip
\noindent
\textbf{(III)~Galerkin Interpolation operator.}
The final projector that we consider in this chapter is denoted as
$\PiVhPpw$ and is the piecewise linear Galerkin interpolation on a
triangular partition of $\P$.
If $\P$ is a convex polygon, we can easily build such triangular
partition by connecting its vertices and the barycenter given by the
convex linear combination
\begin{align*}
  \xvV^* = \sum_{\V\in\partial\P} \alpha_{\V}\xvV,
\end{align*}
for some suitable choices of the coefficients $\alpha_{\V}$ that are such
that $0\leq\alpha_{\V}\leq1$ for every $\V$ and
$\sum_{\V\in\partial\P}\alpha_{\V}=1$.
If $\P$ is only star-shaped but not necessarily convex, we can still
define an inner point $\V^*$ by a different choice of the coefficients
$\alpha_{\V}$.
For a given function $\Dsh\in\Vh(\P)$, we assume that
\begin{align*}
  \PiVhPpw\Dsh(\xvV^*) = \sum_{\V\in\partial\P}\alpha_{\V}\Dsh(\V)
  \quad\textrm{and}\quad
  \PiVhPpw\Dsh(\xvV) = \Dsh(\xvV)\quad\forall\V\in\partial\P.
\end{align*}
Then, in every triangle $\T$ with vertices $\Vp$, $\Vpp$ and $\V^*$,
we define $\PiVhPpw\Dsh(\xv)$ as the linear interpolant of the values
$\Dsh(\Vp)$, $\Dsh(\Vpp)$, and $\Dsh(\V^*)$.

\subsection{The Edge Space}
\label{subsec:EdgeSpace}

The next virtual element space that we consider is the finite
dimensional counterpart of $\HDIV{\Omega}$.
This space was introduced
in~\cite{BeiraodaVeiga-Brezzi-Marini-Russo:2016a}.
Like before, we begin by defining a local space over a cell $\P$.
The formal definition reads as
\begin{align}
  \Eh(\P) :=
  \Big\{
  \Cvh\in\HDIV{\P}\cap\HROT{\P}:\,
  & \DIV\Cvh\in\PS{0}(\P),\;\;\ROT\Cvh = 0,
  \nonumber\\
  & \restrict{\Cvh}{\E}\cdot\nv\in\PS{0}(\E)
  \,\,\forall\E\in\partial\P
  \Big\}.
  \label{eq:Eh:def}
\end{align}
Every virtual element function $\Cvh\in\Eh(\P)$ is characterized by
the following set of degrees of freedom
\begin{itemize}
\item[]\TERM{E} the average of the normal flux on each edge:
  \begin{align*}
    \forall\E\in\partial\P:\quad
    \frac{1}{|\E|}\int_\E\Cvh\cdot\nv\ d\ell.
  \end{align*}
\end{itemize}
These are represented by red arrows pointing out the edges of the cell
$\P$, see the sample picture in Figure~\ref{fig:EdgesDOF}.

\begin{figure}[t]
  \centering
  \includegraphics[scale=0.5]{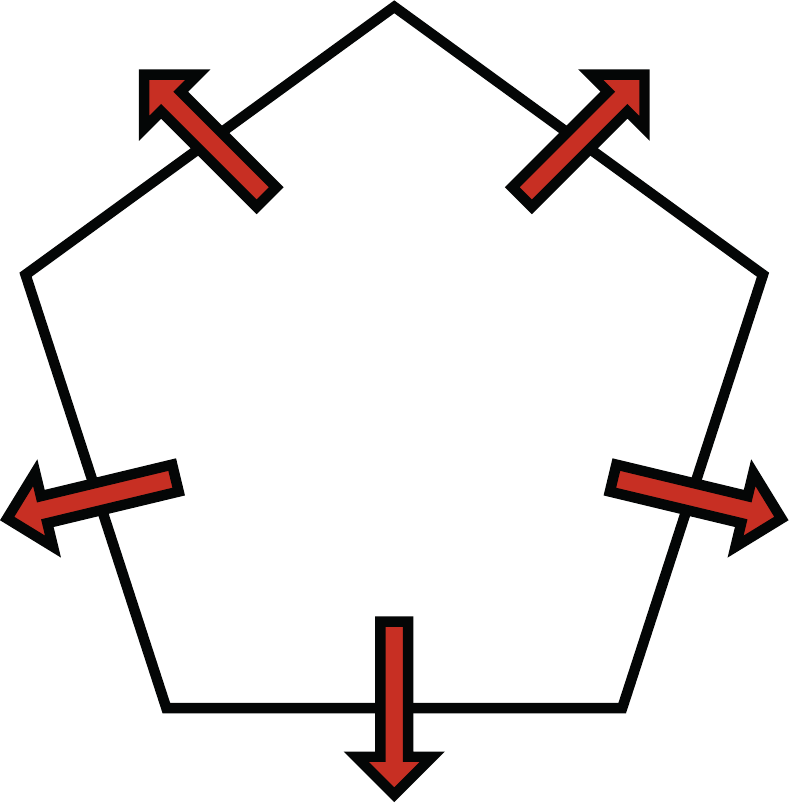}
  \caption{Representation of the degrees of freedom of functions in $\Eh(\P)$.}
  \label{fig:EdgesDOF}
\end{figure}
In order to properly state the property of unisolvency we introduce
$\calR_\E:\HDIV{\P}\to\REAL^N$ such that for any $\Cv\in\HDIV{\P}$ the
array $\calR_\E(\Cv)$ is the array of degrees of freedom of $\Cv$.
This result is stated below
\begin{theorem}\label{Thm:EhUnisolvency}
Let $\calK_\E:\Eh(\P)\to\REAL^N$ be the restriction of $\calR_\E$.
Then, $\calK_\E$ is bijective.
\end{theorem}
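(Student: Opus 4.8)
The plan is to argue exactly as for unisolvency of the nodal space in Theorem~\ref{Thm:VhUnisolvency}: first establish that $\calK_\E$ is injective by a kernel argument, then that it is surjective by an explicit construction, so that bijectivity follows at once (and, as a byproduct, $\Eh(\P)$ turns out to be finite dimensional with $\dim\Eh(\P)=N$, where $N$ is the number of edges of $\P$). Two preliminary remarks will be used throughout. First, by the mesh assumption~\textbf{(M1)} the element $\P$ is simply connected, so any field in $[\LTWO(\P)]^2$ with vanishing scalar $\ROT$ on $\P$ is the gradient of some $\phi\in\HONE(\P)$. Second, for $\Cvh\in\Eh(\P)$ the normal component $\restrict{\Cvh}{\E}\cdot\nv$ is constant along each edge $\E$, hence the degree of freedom \TERM{E} on $\E$ equals that constant, and the array $\calR_\E(\Cvh)$ simply records the edgewise normal fluxes.

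\textbf{Injectivity.} Suppose $\Cvh\in\Eh(\P)$ with $\calK_\E(\Cvh)=\zrv$, i.e. $\Cvh\cdot\nv=0$ on every edge and therefore on all of $\partial\P$. Since $\ROT\Cvh=0$ on the simply connected set $\P$, write $\Cvh=\nabla\phi$ with $\phi\in\HONE(\P)$; then $\Delta\phi=\DIV\Cvh$ equals a constant $c$, while $\nabla\phi\cdot\nv=0$ on $\partial\P$. The divergence theorem gives $c\,\mP=\int_\P\Delta\phi\,dA=\int_{\partial\P}\nabla\phi\cdot\nv\,d\ell=0$, so $c=0$; thus $\phi$ solves $\Delta\phi=0$ in $\P$ with homogeneous Neumann data, hence $\phi$ is constant on the connected set $\P$ and $\Cvh=\nabla\phi=\zrv$.

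\textbf{Surjectivity.} Given $\gv=(g_\E)_{\E\in\partial\P}\in\REAL^N$, set $c:=\mP^{-1}\sum_{\E\in\partial\P}g_\E\,\mE$ and let $g$ be the piecewise-constant function on $\partial\P$ equal to $g_\E$ on $\E$. Then $\int_{\partial\P}g\,d\ell=c\,\mP$, which is precisely the compatibility condition for the Neumann problem ``$\Delta\phi=c$ in $\P$, $\nabla\phi\cdot\nv=g$ on $\partial\P$'', understood in the usual weak sense; this problem has a solution $\phi\in\HONE(\P)$, unique up to an additive constant. Setting $\Cvh:=\nabla\phi$, one checks directly that $\Cvh\in\HDIV{\P}\cap\HROT{\P}$, that $\ROT\Cvh=0$, that $\DIV\Cvh=\Delta\phi=c\in\PS{0}(\P)$, and that $\restrict{\Cvh}{\E}\cdot\nv=g_\E\in\PS{0}(\E)$ for every edge; hence $\Cvh\in\Eh(\P)$ and $\calK_\E(\Cvh)=\gv$.

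Since $\calK_\E$ is then a linear injection into $\REAL^N$ we get $\dim\Eh(\P)\le N$, and since it is onto $\REAL^N$ we get $\dim\Eh(\P)\ge N$; therefore $\calK_\E$ is a bijection. I expect the surjectivity step to be the main obstacle: it rests on the solvability and uniqueness (modulo constants) of the inhomogeneous Neumann problem on the polygon $\P$, which I would settle by Lax--Milgram on $\HONE(\P)/\REAL$, and on verifying that the constructed $\Cvh=\nabla\phi$ genuinely belongs to $\Eh(\P)$ --- the only nonobvious membership requirement, namely that $\Cvh\cdot\nv$ be edgewise constant, being immediate here since the prescribed Neumann datum $g$ is piecewise constant. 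A fully detailed write-up could alternatively refer to the original construction of this space in~\cite{BeiraodaVeiga-Brezzi-Marini-Russo:2016a}.
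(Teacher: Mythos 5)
Your proof is correct, but note that the paper itself does not actually prove this theorem: exactly as for Theorem~\ref{Thm:VhUnisolvency}, the unisolvency of the edge space is simply quoted (the space and its degrees of freedom come from~\cite{BeiraodaVeiga-Brezzi-Marini-Russo:2016a}), and the statement is immediately used to define $\IEhP$. Your self-contained argument is the standard one and it holds up: for injectivity, vanishing degrees of freedom give $\Cvh\cdot\nv=0$ on all of $\partial\P$ because the normal trace is edgewise constant; $\ROT\Cvh=0$ on the simply connected element (guaranteed by \textbf{(M1)}) yields $\Cvh=\nabla\phi$, the divergence theorem kills the constant $\DIV\Cvh$, and the homogeneous Neumann problem forces $\phi$ to be constant, so $\Cvh=\zrv$. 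For surjectivity, the compatibility condition you verify is exactly what makes the Neumann problem with piecewise-constant datum solvable by Lax--Milgram on $\HONE(\P)/\REAL$, and $\Cvh=\nabla\phi$ lies in $\Eh(\P)$ since its distributional divergence is the constant $c$, its rot vanishes, and its (a priori only $H^{-1/2}$) normal trace coincides with the prescribed edgewise constants, so the \TERM{E} dofs reproduce the given vector. Two cosmetic remarks: the closing dimension count is redundant, since a linear map that is injective and surjective is already bijective; and uniqueness of $\phi$ up to constants is not needed for the argument, only existence. What your approach buys over the paper's is a complete proof at the cost of invoking well-posedness of the Neumann problem; what the paper's (implicit) route buys is brevity by deferring to the original VEM reference.
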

In view of the unisolvency of $\Eh(\P)$, we define the interpolation
operator $\IEhP = \calK_\E^{-1}\circ\calR_\E$.

Next, we define an important projector in the space $\Eh(\P)$, namely
the orthogonal projections $\PizP{}:\Eh(\P)\to\PS{0}(\P)$ whose image
are the solution to the variational problem
\begin{equation}
  \scalP{\Cvh-\PizP{}\Cvh}{\qv} = 0
  \qquad\text{for all}\quad \qv\in\big[\PS{0}(\P)\big]^2,
  \label{eq:EhProjDefs:A}
\end{equation}
for every $\Cvh\in\Eh(\P)$, and where $\RT{0}(\P)$ is the space of
vector-valued functions over $\P$ defined as
This projector are computable using the degrees of freedom \TERM{E}.
Pick $\qv\in[\PS{0}(\P)]^2$, $\ps\in\PS{1}(\P)$ where the scalar polynomial $\ps$ is
chosen so that $\qv=\nabla\ps$.
Then, we apply the Green theorem and we find that
\begin{equation}
  \int_{\P}\Cvh\cdot\qv\ dA =
  \int_{\P}\Cvh\cdot\nabla\ps\ dA =
  -\int_{\P} (\DIV\Cvh)\ps\ dA
  +\int_{\partial\P} \ps\Cvh\cdot\nv\ d\ell
  \label{eq:EhComputeProjs}
\end{equation}
for all $\Cvh\in\Eh(\P)$.
We split the integral on $\partial\P$ in the summation of line
integrals
\begin{equation}\label{eq:SplitLineInt}
  \int_{\partial\P} \Cvh\cdot\nv\ps\ d\ell = 
  \sum_{\E\in\partial\P}\restrict{(\Cvh\cdot\nv)}{\E}\int_{\E}\ps\ d\ell,
\end{equation}
and we note that $\restrict{\Cvh\cdot\nv}{\E}$ is constant on each
edge $\E\in\partial\P$, cf. space definition~\eqref{eq:Eh:def}, and
coincides with the evaluation in \TERM{E}{}.
In turn, we compute $\DIV\Cvh\in\PS{0}(\P)$ by applying the divergence
theorem:
\begin{align}
  \DIV\Cvh =\frac{1}{|\P|}\int_{\partial\P}\Cvh\cdot\nv\ d\ell
  = \frac{1}{\mP}\sum_{\E\in\partial\P}\mE\left(\frac{1}{\mE}\int_{\E} \Cvh\cdot\nv\ d\ell\right).
\end{align}
Note that the polynomial $\ps$ is determined by the relation
$\nabla\ps=\qv$ and is defined up to an additive constant factor.
If we choose this constant factor equal to the elemental average of
$\ps$ on $\P$, so that $\int_\P\ps\ dA=0$, we can make the area
integral vanish since
\begin{equation}
  \int_\P(\DIV\Cvh)\ps\ dA
  = \restrict{(\DIV\Cvh)}{\P}\int_{\P}\ps\ dA = 0.
\end{equation}
In conclusion, the information about a virtual element function $\Cvh$
in the space $\Eh(\P)$ that we need to compute the projection $\PizP{}\Cvh$ can be read off the degrees of
freedom of $\Cvh$.

We can use $\PizP{}$ to define the inner product:
\begin{align}
  \scalEhP{\Bvh}{\Cvh}
  = \scal{\PizP{}\Bvh}{\PizP{}\Cvh}
  + \calS^\E((\calI-\PizP{})\Bvh,(\calI-\PizP{})\Cvh)
  \label{eq:EhInProdDef}
\end{align}
for every possible pair of virtual element functions
$\Bvh,\Cvh\in\Eh(\P)$.
As before, the stabilization form $\calS^\E$ can be \emph{any}
continuous bilinear form for which there exists two strictly positive
constants $s_*$ and $s^*$ independent of $\hh$ such that
\begin{align*}
  s_*\NORM{\Cvh}{0,\P}^2
  \leq\calS^\E(\Cvh,\Cvh)\le
  s^*\NORM{\Cvh}{0,\P}^2
  \quad\forall\Cvh\in\Eh(\P)\cap\ker\PizP{}\cap\Eh(\P).
\end{align*}
Practical implementations of $\calS^\E$can be designed according
with~\cite{Mascotto:2018,Dassi-Mascotto:2018} for more examples.
The constants $s_*$ and $s^*$ are different from those in equation
\eqref{eq:Vhsstardefs}.
%
This inner product defines the norm 
\begin{align}
  \TNORM{\Cvh}{\Eh(\P)} = \scalEhP{\Cvh}{\Cvh}^{1/2}
  \quad\forall\Cvh\in\Eh(\P),
  \label{eq:EhPNorm}
\end{align}
and the two fundamental properties of $\PS{0}$-consistency an
stability hold as stated in the following theorem.

\medskip
\begin{theorem}
  \label{Thm:EhPInProdProperties}
  The inner product $\scalEhP{\,\cdot\,}{\,\cdot\,}$ defined
  in~\eqref{eq:EhInProdDef} has the two properties:
  \begin{itemize}
  \item\textbf{$\PS{0}$-consistency}:
    \begin{align}
      \scalEhP{\Cvh}{\qv} = \scal{\Cvh}{\qv}
      \quad\forall\Cvh\in\Eh(\P),\,\qv\in[\PS{0}(\P)]^2
      \label{eq:Eh(P):accuracy}
    \end{align}
  \item\textbf{Stability}: there exists two real constants $\beta_*$
    and $\beta^*>0$ independent of $\hh$ and $\P$ such that
    \begin{align}
      \beta_*\NORM{\Cvh}{0,\P}^2
      \leq\TNORM{\Cvh}{\Eh(\P)}^2\leq
      \beta^*\NORM{\Cvh}{0,\P}^2
      \quad\forall\Cvh\in\Eh(\P).
      \label{eq:Eh(P):stability}
    \end{align}
  \end{itemize}
\end{theorem}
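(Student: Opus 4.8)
The plan is to mirror the proof of Theorem~\ref{Thm:EquivalentInProdsVh(P)} step by step, now with the oblique reconstruction $\Pi_\P$ replaced by the genuine $\LTWO(\P)$-orthogonal projection $\PizP{}$ of~\eqref{eq:EhProjDefs:A} and the nodal stabilization $\calS^\V$ replaced by the edge stabilization $\calS^\E$. Since $\PizP{}$ is orthogonal and idempotent, two of the three estimates come out slightly shorter than their $\Vh(\P)$ counterparts, and one of them even gets a stronger conclusion. Throughout, $\scal{\cdot}{\cdot}$ denotes the $\LTWO(\P)$ inner product, as in~\eqref{eq:EhInProdDef}.

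For the $\PS{0}$-consistency~\eqref{eq:Eh(P):accuracy} I would fix $\Cvh\in\Eh(\P)$ and $\qv\in[\PS{0}(\P)]^2$ and first note that $\PizP{}$ fixes constant vector fields, so $(\calI-\PizP{})\qv=\zrv$; hence the stabilization term in~\eqref{eq:EhInProdDef} drops out by bilinearity of $\calS^\E$, leaving $\scalEhP{\Cvh}{\qv}=\scal{\PizP{}\Cvh}{\qv}$. Then, because the residual $\Cvh-\PizP{}\Cvh$ is $\LTWO(\P)$-orthogonal to $[\PS{0}(\P)]^2$ by~\eqref{eq:EhProjDefs:A} and $\qv$ lies in that space, $\scal{\PizP{}\Cvh}{\qv}=\scal{\Cvh}{\qv}$, which is the claim. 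I would also remark that this is stronger than the linear-consistency property of $\Vh(\P)$ (where only the case of two polynomial arguments could be recovered): the one-sided statement works here precisely because $\PizP{}$ is an orthogonal projection.

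For the stability~\eqref{eq:Eh(P):stability} the lower bound follows the $\Vh(\P)$ template: add and subtract $\PizP{}\Cvh$, use the triangle inequality and $(a+b)^2\le 2a^2+2b^2$ to get $\NORM{\Cvh}{0,\P}^2\le 2\NORM{\PizP{}\Cvh}{0,\P}^2+2\NORM{(\calI-\PizP{})\Cvh}{0,\P}^2$, and then bound the second summand by the stabilization through its left-hand inequality. The one point that needs care is verifying that this inequality may indeed be applied to $(\calI-\PizP{})\Cvh$, i.e.\ that this field lies in $\ker\PizP{}\cap\Eh(\P)$: it is killed by $\PizP{}$ because $\PizP{}$ is idempotent, and it belongs to $\Eh(\P)$ because $\PizP{}\Cvh\in[\PS{0}(\P)]^2\subset\Eh(\P)$ and $\Eh(\P)$ is a linear space. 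This yields $\NORM{\Cvh}{0,\P}^2\le 2\max(1,1/s_*)\,\TNORM{\Cvh}{\Eh(\P)}^2$, i.e.\ $\beta_*=\frac12\min(1,s_*)$. For the upper bound, orthogonality removes any need for an analogue of property~\TERM{P3}{}: both $\PizP{}$ and $\calI-\PizP{}$ have operator norm one on $\LTWO(\P)$, so $\TNORM{\Cvh}{\Eh(\P)}^2=\NORM{\PizP{}\Cvh}{0,\P}^2+\calS^\E\big((\calI-\PizP{})\Cvh,(\calI-\PizP{})\Cvh\big)\le\NORM{\Cvh}{0,\P}^2+s^*\NORM{(\calI-\PizP{})\Cvh}{0,\P}^2\le(1+s^*)\NORM{\Cvh}{0,\P}^2$, i.e.\ $\beta^*=1+s^*$.

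I do not expect a genuine obstacle: this is essentially a transcription of the proof already given for $\Vh(\P)$, simplified by the orthogonality of $\PizP{}$. The two things I would make a point of stating explicitly are the membership $(\calI-\PizP{})\Cvh\in\ker\PizP{}\cap\Eh(\P)$ used above and, as in the $\Vh(\P)$ case, the observation that $\beta_*$ and $\beta^*$ depend only on the stabilization constants $s_*,s^*$, hence are independent of the mesh size $\hh$ and of the element $\P$.
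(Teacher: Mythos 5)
Your proposal is correct and follows essentially the same route as the paper, which simply refers back to the proof of Theorem~\ref{Thm:EquivalentInProdsVh(P)} and notes that the orthogonality of $\PizP{}$ both strengthens the consistency statement (only one argument needs to be polynomial) and removes the need for a boundedness hypothesis like \TERM{P3}{}. Your explicit constants $\beta_*=\tfrac12\min(1,s_*)$, $\beta^*=1+s^*$ and the check that $(\calI-\PizP{})\Cvh\in\ker\PizP{}\cap\Eh(\P)$ are sound additions but not a different argument.
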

\begin{proof}
  We omit the proof of this Theorem since it is
  essentially the same as the one presented for
  Theorem~\ref{Thm:EquivalentInProdsVh(P)}.
  We just note that here the orthogonality of the projector $\PizP{}P{}$
  makes a more general result possible as the consistency condition is
  verified if at least one and not necessarily both of the entries of
  $\scalEhP{\Cvh}{\qv}$ is a (vector-valued) polynomial field.
  In fact, if $\qv\in\big[\PS{2}(\P)\big]^2$ we have that
  $\calS^\E((\calI-\PizP{})\Cvh,(\calI-\PizP{})\qv)=0$ because
  $\PizP{}\qv=\qv$.
  Then, the definition of the orthogonal projection $\PizP{}$, which
  is also polynomial-preserving, implies that
  \begin{align}
    \scalEhP{\Bvh}{\Cvh}
    = \scalP{\PizP{}\Cvh}{\PizP{}\qv}
    = \scalP{\Cvh}{\PizP{}\qv}
    = \scalP{\Cvh}{\qv}
  \end{align}
  for all $\Cvh\in\Eh(\P)$ and $\qv\in\PS{0}(\P)$.
\end{proof}

\medskip
We introduce the global virtual element space $\Eh$ built on
the mesh $\Th$ by pasting together the elemental spaces $\Eh(\P)$
built on all cells $\P$:
\begin{align*}
  \Eh = \big\{
  \Cvh\in\HDIV{\Omega}:\,
  \restrict{\Cvh}{\P}\in\Eh(\P)
  \,\,\forall\P\in\Th
  \big\}.
\end{align*}
We endow this space with the inner product
\begin{align}
  \scalEh{\Bvh}{\Cvh}
  = \sum_{\P\in\Th}\scalEhP{\restrict{\Bvh}{\P}}{\restrict{\Cvh}{\P}}
  \quad\forall\Bvh,\Cvh\in\Eh,
  \label{eq:EhInProdGlobal}
\end{align}
and the induced norm
\begin{align}
  \TNORM{\Cvh}{\Eh}^2 = \scalEh{\Cvh}{\Cvh}
  \quad\forall\Cvh\in\Eh.
  \label{eq:EhNormGlobal}
\end{align}
As for the nodal space, this global inner product and associated norm
satisfy the fundamental properties of \emph{$\PS{0}$-consistency} and
\emph{stability}, which we state in the next corollary.
These properties imply the exactness of the inner product defined in
\eqref{eq:EhInProdGlobal} on the piecewise constant functions and that
the norm defined in~\eqref{eq:EhInProdDef} is equivalent to the
$\LTWO$ norm.
We omit the proof since these properties are an immediate consequence
of Theorem~\ref{Thm:EhPInProdProperties}.

\medskip
\begin{corollary}\label{Cor:EhEquivNormsGlobal}
  The inner product $\scalEh{\,\cdot\,}{\,\cdot\,}$ defined
  in~\eqref{eq:EhInProdGlobal} has the two properties:
  \begin{itemize}
  \item\textbf{Linear consistency}:
    \begin{align}
      \scalEhP{\Cvh}{\qv} = \scal{\Cvh}{\qv}
      \quad\forall\Cvh\in\Eh,\,\qv\in[\PS{0}(\Th)]^2.
      \label{eq:Eh:accuracy}
    \end{align}
  \item\textbf{Stability}: there exists two real constants $\beta_*$
    and $\beta^*>0$ independent of $\hh$ such that
    \begin{align}
      \beta_*\NORM{\Cvh}{0,\P}^2
      \leq\TNORM{\Cvh}{\Eh(\P)}\leq
      \beta^*\NORM{\Cvh}{0,\P}^2
      \quad\forall\Cvh\in\Eh(\P).
      \label{eq:Eh:stability}
    \end{align}
  \end{itemize}
\end{corollary}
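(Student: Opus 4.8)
The plan is to obtain both properties by summing the corresponding elemental statements of Theorem~\ref{Thm:EhPInProdProperties} over the cells of $\Th$; no new estimate is required. The only point that needs attention is that the elemental constants $\beta_*$ and $\beta^*$ are independent of the cell $\P$ --- which Theorem~\ref{Thm:EhPInProdProperties} guarantees, ultimately through the $\hh$- and $\P$-independence of $\Cs_{\Pi}$ under the mesh regularity assumptions \textbf{(M1)}--\textbf{(M2)} --- so that they can be pulled out of a sum over a mesh of arbitrarily large cardinality without degradation.

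For the \emph{linear consistency}, I would fix $\Cvh\in\Eh$ and $\qv\in[\PS{0}(\Th)]^2$ and note that, by the very definitions of these spaces, $\restrict{\Cvh}{\P}\in\Eh(\P)$ and $\restrict{\qv}{\P}\in[\PS{0}(\P)]^2$ for every $\P\in\Th$. Applying the elemental $\PS{0}$-consistency~\eqref{eq:Eh(P):accuracy} on each cell gives $\scalEhP{\restrict{\Cvh}{\P}}{\restrict{\qv}{\P}}=\scalP{\restrict{\Cvh}{\P}}{\restrict{\qv}{\P}}$; summing over $\P\in\Th$, using the definition~\eqref{eq:EhInProdGlobal} of $\scalEh{\cdot}{\cdot}$ on the left-hand side and the additivity $\scal{\cdot}{\cdot}=\sum_{\P\in\Th}\scalP{\cdot}{\cdot}$ of the $\LTWO(\Omega)$ inner product over the (essentially disjoint) cells on the right-hand side, yields~\eqref{eq:Eh:accuracy}.

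For the \emph{stability}, I would apply the elemental bounds~\eqref{eq:Eh(P):stability} to $\restrict{\Cvh}{\P}$ on each cell, giving $\beta_*\NORM{\Cvh}{0,\P}^2\le\TNORM{\restrict{\Cvh}{\P}}{\Eh(\P)}^2\le\beta^*\NORM{\Cvh}{0,\P}^2$, and then sum over $\P\in\Th$. Since $\beta_*$ and $\beta^*$ do not depend on $\P$, they factor out of the sums, and using $\sum_{\P\in\Th}\NORM{\Cvh}{0,\P}^2=\NORM{\Cvh}{0,\Omega}^2$ together with the definition~\eqref{eq:EhNormGlobal} of the global norm $\TNORM{\cdot}{\Eh}$ gives the global norm equivalence with the very same constants. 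The step I expect to be the only real ``obstacle'' is purely a matter of reading the statement correctly: the $\P$-subscripted quantities written in the corollary must be interpreted as their global counterparts ($\NORM{\cdot}{0}$ over all of $\Omega$ and $\TNORM{\cdot}{\Eh}$), which is exactly what the uniform-in-$\P$ character of the elemental inequalities makes legitimate.
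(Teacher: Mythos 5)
Your proposal is correct and follows the same route as the paper, which omits the proof precisely because the corollary is an immediate consequence of Theorem~\ref{Thm:EhPInProdProperties}: one sums the elemental consistency and stability statements over the cells of $\Th$, with the $\P$-independence of the constants allowing them to pass to the global bounds unchanged. Your additional remark that the $\P$-subscripted norms in the statement should be read as their global counterparts is a fair reading of what the corollary intends.
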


Next, we introduce the \emph{global interpolation operator}
$\IEhP:\HDIV{\Omega}\to\Vh$.
This operator is defined by gluing together its respective
elemental definitions, so that 
\begin{align*}
  \restrict{\IEh(\Cvh)}{\P}=\IEhP(\restrict{\Cvh}{\P})
\end{align*}

To end this subsection we will define a second orthogonal projection that we will use to approximate a term unique to MHD.
Consider a cell $\P$ and define $\PiEhRTP:\Eh(\P)\to\RT{0}(\P)$.
Given $\Cvh\in\Eh(\P)$ the image $\PiEhRTP\Cvh$ is the solution to the variational formulation
\begin{align}
\scalP{\Cvh-\PiEhRTP\Cvh}{\qv} &= 0
  \qquad\text{for all}\quad \qv\in\RT{0}(\P),
  \label{eq:EhProjDefs:B}
\end{align}
where
\begin{align*}
  \RT{0}(\P) = \left\{
   \as\left(\begin{array}{c}1\\0\end{array}\right)
  +\bs\left(\begin{array}{c}0\\1\end{array}\right)
  +\cs\left(\begin{array}{c}x\\y\end{array}\right)
  :\,
  \as,\bs,\cs\in\REAL
  \right\}.
\end{align*}
This projector is also computable using only the degrees of freedom in $\Eh(\P)$.
The strategy is the same as the one presented for $\PizP$. 
Consider $\qv\in\RT{0}(\P)$ and $\ps\in\PS{2}(\P)$ with $\int_{\P} p dA = 0$ such that $\nabla\ps = \qv$.
The terms in Green's Theorem~\eqref{eq:EhComputeProjs} can be computed as before with the only difference being that the quadrature rule used in \eqref{eq:SplitLineInt} needs to be exact for quadratic polynomials.

The \emph{global orthogonal projector}
$\PiEhRT:\Eh\to\RTz{}(\Th)$, where
$\RTz{}(\Th)=\big\{\qv\in\HDIV{\Omega}:\restrict{\qv}{\P}\in\RTz{}(\P)\big\}$ is defined as
\begin{equation}
    \restrict{(\PiEhRT\Cvh)}{\P}=\PiEhRTP(\restrict{\Cvh}{\P})
  \quad\forall\P\in\Th.
\end{equation}

We use $\PiEhRT$ to approximate the term "$\uv\times\Bv$" as can be
evidenced in the MHD variational formulation~\eqref{eq:flowVEMVarForm}.
The main issue with the aforementioned term is that we only have
access to the fluxes of the magnetic field across the edges while the
inner product in the variational formulation requires nodal
evaluations.
We amend this inconsistency by projecting the magnetic field onto the
space of vector polynomial fields $\RTz{}(\P)$ and extract the
necessary evaluations from this projection.
We note that we could use $\PizP$ to extract these vertex evaluations.
However, more complex MHD models have terms of the form
\begin{equation}
  \scal{(\ROTv\Bv)\times\Bv}{\Ds}.
\end{equation}
Such a quantity cannot be estimated using $\PizP{}$ since the codomain of
this projector is the space of constants and their curl is zero.
In this case using the projector $\PiEhRT$ is ideal for low order
approximations.

\subsection{The cell space}
\label{subsec:CellSpace}

The final space that we need to define for the electromagnetic part is
the space of piecewise constant functions on $\Th$, i.e., the space of
constant polynomials in every element $\P$:
\begin{align}
  \Ph = \big\{
  \qsh\in\LTWO(\Omega):\,
  \restrict{\qsh}{\P}\in\PS{0}(\P)\,\,
  \forall\P\in\Th
  \big\}.
  \label{eq:Ph:def}
\end{align}
The degrees of freedom of a function $\qsh\in\Ph$ are given by 
\begin{itemize}
\item[]\TERM{D} the elemental averages of $\qsh$ over every cell
  $\P\in\Th$
  \begin{align}
    \frac{1}{\mP}\int_{\P}\qsh\ dA.
  \end{align}
\end{itemize}
These are represented by red disks in the interior of the cell $\P$,
see the sample picture in Figure~\ref{fig:CellDOF}.
\begin{figure}[t]
  \centering
  \includegraphics[scale=0.5]{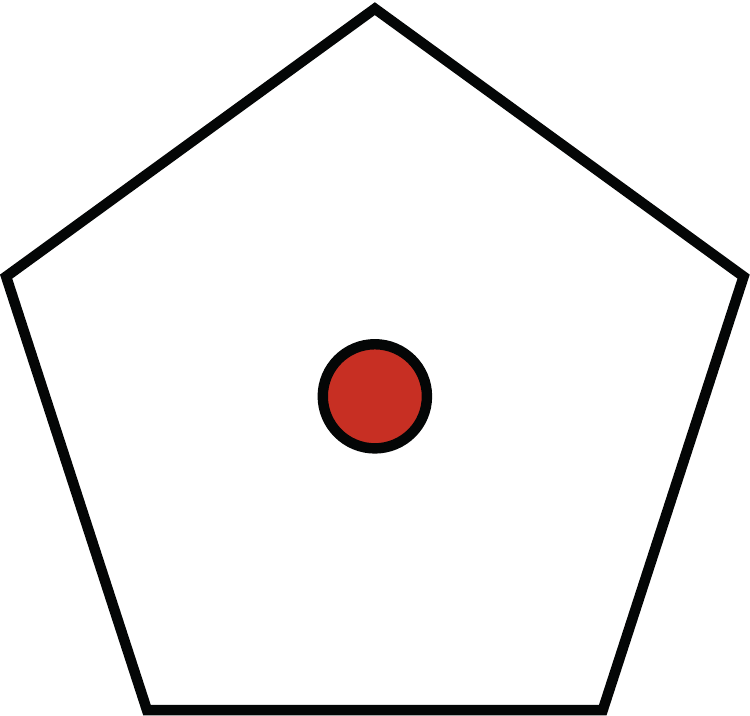}
  \caption{Representation of the degrees of freedom of functions in $\Ph(\P)$.}
  \label{fig:CellDOF}
\end{figure}
It is straightforward
to see that such
degrees of freedom are
unisolvent in $\Ph$.
In fact, the constant value given by restricting a function $\qsh$ to
a cell is precisely the degree of freedom of $\qsh$ associated with
that cell.
We endow the elemental space $\Ph$ with the inner product
\begin{align*}
  \scalPh{\psh}{\qsh}
  = \sum_{\mP}\mP\restrict{\psh}{\P}\restrict{\qsh}{\P}
  \quad\forall\psh,\qsh\in\Ph,
\end{align*}
which is the $\LTWO(\Omega)$ inner product of two piecewise constant
functions.
This inner product induces the norm
\begin{align*}
  \TNORM{\qsh}{\Ph}^2 = \scalPh{\qsh}{\qsh},
  \quad\forall\qsh\in\Ph,
\end{align*}
which is the $\LTWO(\Omega)$-norm restricted to the functions of
$\Ph$, so that
\begin{align*}
  \TNORM{\qsh}{\Ph}=\NORM{\qsh}{0,\Omega}
  \quad\forall\qsh\in\Ph.
\end{align*}
Finally, we define the \emph{global interpolation operator}
$\IPh:\LTWO(\Omega)\to\Ph$ such that for every $\qs\in\LTWO(\Omega)$
we have:
\begin{align}
  \restrict{(\IPh\qs)}{\P} = \frac{1}{|\P|}\int_\P\qsh\ dA
  \quad\forall\P\in\Th.
\end{align}

\subsection{The de~Rham complex.}
\label{subsec:ElectroMagDeRham}

In the previous sections we introduced and discussed the virtual
element spaces $\Vh$, $\Eh$ and $\Ph$.
It is well-known that the spaces $\HROTv{\Omega},\HDIV{\Omega}$ and
$\LTWO(\Omega)$ form the de~Rham chain
\begin{align}
  \begin{CD}
    \HROTv{\Omega} @> \ROTv >> \HDIV{\Omega} @> \DIV >> \LTWO(\Omega).
  \end{CD}
  \label{eq:HROTHDIVL2Chain}
\end{align}
If $\Omega$ is simply connected, the chain is exact, see \cite{Munkres:2018}.
Equivalently, we can say that
\begin{align*}
  \ROTv\HROT{\Omega} = 
  \big\{
  \Cv\in\HDIV{\Omega}:\DIV\Cv = 0
  \big\}.
\end{align*}
In the spirit of constructing a discrete version of the continuous
problem, the spaces $\Vh,\Eh$ and $\Ph$ also form a similar exact
de~Rham chain
\begin{align}\label{eq:VhEhPhChain}
  \begin{CD}
    \Vh @> \ROTv >> \Eh @> \DIV >> \Ph.
  \end{CD}
\end{align}
This chain was first introduced in
\cite{BeiraodaVeiga-Brezzi-Marini-Russo:2016}, and explored in more
details and generality in
\cite{BeiraodaVeiga-Brezzi-Marini-Russo:2016a}.
It reveals that the set of degrees of freedom are transformed in
accordance with the following diagram:\\

\medskip
\begin{minipage}{0.9\textwidth}
  \hspace{1.1cm}
  \begin{picture}(0,100)
    \put(0,10){\includegraphics[width=.2\textwidth]{fig00}}
    \put(100,40){$\xrightarrow{\;\;\;\;\;\;\ROTv\;\;\;\;\;\;\;}$}
    %
    \put(160,10){\includegraphics[width=.2\textwidth]{fig01}}
    \put(250,40){$\xrightarrow{\;\;\;\;\;\;\DIV\;\;\;\;\;\;\;}$}
    %
    \put(300,10){\includegraphics[width=.2\textwidth]{fig02}}
  \end{picture}
\end{minipage}

\medskip
First we want to show that the chain in \eqref{eq:VhEhPhChain} is
well-defined.
This is to say that two important inclusions hold.
The first is presented in the following lemma.
\begin{lemma}
  \label{lem:ElectroMagDeRhamImageOfROT}
  Let $\Vh$ and $\Eh$ be the virtual element spaces
  defined in~\eqref{eq:Vh:def} and \eqref{eq:Eh:def} respectively.
  Then, it holds that
  \begin{align}
    \ROTv\Vh\subset\Eh.
    \label{eq:lemImROTeq}
  \end{align}
\end{lemma}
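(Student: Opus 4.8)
The plan is to use the fact that $\ROTv$ is a purely local differential operator, so the inclusion reduces to an element-by-element check plus a short global-conformity argument. Given $\Dsh\in\Vh$, set $\Cvh:=\ROTv\Dsh$; since the weak gradient localizes, $\restrict{\Cvh}{\P}=\ROTv\big(\restrict{\Dsh}{\P}\big)$ for every $\P\in\Th$. By the definition of $\Eh$ it then suffices to prove two things: that $\Cvh\in\HDIV{\Omega}$, and that $\restrict{\Cvh}{\P}\in\Eh(\P)$ for each $\P$, with $\Eh(\P)$ as in \eqref{eq:Eh:def}.

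First I would settle the global regularity. Because $\Dsh\in\HROTv{\Omega}$ we have $\Cvh=\ROTv\Dsh\in[\LTWO(\Omega)]^2$, and the distributional identity $\DIV\ROTv\equiv 0$ (the mixed second distributional derivatives of $\Dsh$ commute) gives $\DIV\Cvh=0$ in $\LTWO(\Omega)$, hence $\Cvh\in\HDIV{\Omega}$. Equivalently — and this is the version one may prefer to spell out — $\HROTv{\Omega}$ coincides with $\HONE(\Omega)$ for scalar functions, so $\Dsh$ has single-valued traces on interior edges; the two one-sided restrictions of $\Dsh$ to a shared edge $\E$ are elements of $\PS{1}(\E)$ agreeing in $L^2(\E)$, hence equal, and the edge formula below then shows that the normal component of $\Cvh$ matches across $\E$.

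Next I would verify, for a fixed $\P\in\Th$, the three defining conditions of $\Eh(\P)$. The condition $\DIV\Cvh\in\PS{0}(\P)$ is immediate since $\DIV\Cvh=0$; the condition $\ROT\Cvh=0$ holds because $\ROT\Cvh=\ROT\ROTv\Dsh$, which vanishes in $\P$ by the very definition \eqref{eq:Vh:def} of $\Vh(\P)$ (this also places $\Cvh$ in $\HROT{\P}$, completing $\Cvh\in\HDIV{\P}\cap\HROT{\P}$). The remaining, and conceptually the only delicate, point is the edge condition $\restrict{\Cvh}{\E}\cdot\nv\in\PS{0}(\E)$: here I would invoke the pointwise identity $(\ROTv\Dsh)\cdot\nv_\E=\partial_{\tv_\E}\Dsh$ on each $\E\in\partial\P$, i.e. the normal component of the vector curl of $\Dsh$ equals the tangential derivative of $\Dsh$ along $\E$ (up to the orientation convention relating $\tv_\E$ and $\nv_\E$). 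Since $\restrict{\Dsh}{\E}\in\PS{1}(\E)$ by the definition of $\Vh(\P)$, its tangential derivative along $\E$ is a constant, so $\restrict{\Cvh}{\E}\cdot\nv\in\PS{0}(\E)$. Combining the three local properties with $\Cvh\in\HDIV{\Omega}$ gives $\Cvh\in\Eh$, which is \eqref{eq:lemImROTeq}.

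The main obstacle is not any substantial computation but rather the bookkeeping around the edge identity: one must pin down which rotation carries $\nv_\E$ to $\tv_\E$ and check that the conclusion ``tangential derivative of a degree-one trace is constant'' is insensitive to that choice. Everything else follows directly from the definitions and the elementary identities $\DIV\ROTv=0$ and $\ROT\ROTv=-\Delta$ together with the harmonicity built into $\Vh(\P)$.
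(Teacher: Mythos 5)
Your proof is correct and takes essentially the same route as the paper: an element-by-element verification of the three defining conditions of $\Eh(\P)$, using $\DIV\ROTv\Dsh=0$, the constraint $\ROT\ROTv\Dsh=0$ built into \eqref{eq:Vh:def}, and the identity $(\ROTv\Dsh)\cdot\nv=\nabla\Dsh\cdot\tv$ so that the linear edge traces of $\Dsh$ yield constant normal components. The additional care you take with the global $\HDIV{\Omega}$-conformity merely makes explicit what the paper's proof leaves implicit.
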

\BEGINPROOF
Let $\P$ be a mesh cell of $\Th$ and take $\Dsh\in\Vh(\P)$.
  In view of the definition of $\Vh(\P)$, we have that
  $\ROT\ROTv\Dsh=0$ in $\P$ and, clearly,
  $\DIV\ROTv\Dsh=0\in\PS{0}(\P)$.
  Moreover, for every edge $\E\in\partial\P$, we find that
  $\restrict{(\ROT\Dsh\cdot\nv)}{\E}=\restrict{(\nabla\Dsh\cdot\tv)}{\E}\in\PS{0}(\E)$.
  Consequently, $\restrict{(\ROTv\Dsh)}{\P}\in\Eh(\P)$, and, thus,
  $\ROTv\Dsh\in\Eh$ for every $\Dsh\in\Eh$ proving the inclusion
  relation in \eqref{eq:lemImROTeq}.
\ENDPROOF

From Lemma~\ref{lem:ElectroMagDeRhamImageOfROT}, we know that
$\ROTv\Dsh\in\Eh$ if $\Dsh\in\Vh$.
Moreover, we can compute the degrees of freedom of $\ROTv\Dsh$ in
$\Eh$ from the degrees of freedom of $\Dsh$ in $\Vh$.
In fact, by applying the fundamental theorem of line integrals,
we find that
\begin{align}
  \frac{1}{\mE}\int_{\E}\ROTv\Dsh\cdot\nv\ d\ell =
  \frac{1}{\mE}\int_{\E}\nabla\Dsh\cdot\tv\ d\ell=
  \frac{\Dsh(\Vpp)-\Dsh(\Vp)}{\mE},
  \label{eq:RotofVh}
\end{align}
for every edge $\E$ of the polygonal boundary $\partial\P$ with
endpoints $\Vp$ and $\Vpp$ (oriented from $\Vp$ to $\Vpp$), where
again we used the identity $\nv\cdot\ROTv(\Dsh)=\tv\cdot\nabla(\Dsh)$.
In view of equation~\eqref{eq:RotofVh}, we can read the necessary
information to identify the image of the rotational of $\Vh$ as a
subset of $\Eh$ by using the degrees of freedom defined for $\Vh$.

The second inclusion in the chain \eqref{eq:VhEhPhChain} is the conclusion of the following lemma.
\begin{lemma}
  \label{lem:ElectroMagDeRhamImageOfDIV}
  Let $\Eh$ and $\Ph$ be the virtual element spaces
  defined in~\eqref{eq:Eh:def}, and
  \eqref{eq:Ph:def}.
  Then, it holds that
  \begin{align}
    \DIV\Eh\subset\Ph.
    \label{eq:lemImDIVeq}
  \end{align}
\end{lemma}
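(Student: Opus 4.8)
The plan is to unwind the definitions of the global spaces $\Eh$ and $\Ph$ and reduce the claim to the cellwise statement $\restrict{(\DIV\Cvh)}{\P}\in\PS{0}(\P)$, which is built directly into the local definition \eqref{eq:Eh:def} of the edge space.

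First I would take an arbitrary $\Cvh\in\Eh$. By definition of the global edge space, $\Cvh\in\HDIV{\Omega}$ and $\restrict{\Cvh}{\P}\in\Eh(\P)$ for every $\P\in\Th$. The membership $\Cvh\in\HDIV{\Omega}$ guarantees that the distributional divergence $\DIV\Cvh$ is a well-defined function in $\LTWO(\Omega)$ and, crucially, that no spurious interface contributions appear: for every cell $\P$ the restriction $\restrict{(\DIV\Cvh)}{\P}$ coincides with the divergence of $\restrict{\Cvh}{\P}$ computed inside $\P$. This is the one point that deserves a line of justification, and it follows from the fact that the normal traces of $\Cvh$ match across interior edges, which is exactly the $H(\textrm{div};\Omega)$-conformity encoded in $\Cvh\in\HDIV{\Omega}$.

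Then I would invoke the local definition \eqref{eq:Eh:def}: since $\restrict{\Cvh}{\P}\in\Eh(\P)$, we have $\DIV(\restrict{\Cvh}{\P})\in\PS{0}(\P)$. Combining the two previous steps, $\restrict{(\DIV\Cvh)}{\P}\in\PS{0}(\P)$ for every $\P\in\Th$, and since $\DIV\Cvh\in\LTWO(\Omega)$, this is precisely the statement that $\DIV\Cvh\in\Ph$, cf.\ \eqref{eq:Ph:def}. This establishes the inclusion \eqref{eq:lemImDIVeq}.

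There is essentially no obstacle in this proof; the only subtlety, as noted above, is to make explicit that the global $H(\textrm{div};\Omega)$-conformity of $\Cvh$ is what lets us pass from the cellwise divergences to the global one without picking up singular terms supported on the mesh skeleton. Everything else is a direct appeal to the definitions of $\Eh(\P)$, $\Eh$ and $\Ph$.
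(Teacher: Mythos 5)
Your proof is correct and follows essentially the same route as the paper: unwinding the definitions so that $\DIV\Cvh\in\PS{0}(\P)$ on every cell (from \eqref{eq:Eh:def}) immediately gives $\DIV\Cvh\in\Ph$. Your extra remark that the $H(\textrm{div};\Omega)$-conformity of $\Cvh$ ensures the global divergence is the piecewise one, with no skeleton contributions, is a point the paper leaves implicit, but it does not change the argument.
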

\BEGINPROOF
  To verify this inclusion, we only need to note that any
  $\Cvh\in\Eh(\P)$ is such that $\DIV\Cvh\in\PS{0}(\P)$ from the
  definition of $\Eh(\P)$.
  It follows that $\DIV\Cvh\in\PS{0}(\Th)=\Ph$ for every $\Cvh\in\Eh$, which is
  the second inclusion relation in
  \eqref{eq:lemImDIVeq}.
\ENDPROOF

\medskip
Noting that the divergence of a function in $\Eh$ lies in $\Ph$ will help us identify that its divergence can be entirely characterized by its set of degrees of freedom in $\Ph$.
Take $\Cvh\in\Eh$.
From the divergence theorem we have that
\begin{equation}
  \frac{1}{\mP}\int_\P\DIV\Cvh\ dA
  = \frac{1}{\mP}\int_{\partial\P}\DIV\Cvh\ dA
  = \frac{1}{\mP}\sum_{\E\in\partial\P}\mE\left(\frac{1}{\mE}\int_{\E}\Cvh\cdot\nv\ dA\right).
  \label{eq:DivofEh}
\end{equation}
Hence, we can evaluate the divergence of a function $\Cvh\in\Eh$ using
only its degrees of freedom.

The results summarized by equations \eqref{eq:RotofVh} and
\eqref{eq:DivofEh} are essential in order to further study the spaces
$\Vh,\Eh$ and $\Ph$ and their relationship with the larger spaces
$\HROTv{\Omega},\HDIV{\Omega}$ and $\LTWO(\Omega)$.
These spaces form the commutative diagram
\begin{align}\label{eq:ElectroDeRhamComplex}
  \begin{CD}
    \HROTv{\Omega} @> \ROTv >> \HDIV{\Omega} @> \DIV >> \LTWO(\Omega)\\
    @VV \IVh V  @VV \IEh V @VV \IPh V \\
    \Vh  @> \ROTv >> \Eh @> \DIV >> \Ph
  \end{CD}
\end{align}
The proof of this theorem is broken into two lemmas.
The first lemma, presented below, involves the spaces $\HROTv{\Omega}$ and $\HDIV{\Omega}$, and their discrete counterparts $\Vh$ and $\Eh$.
\begin{lemma}\label{lem:ROTDiagramCommutes}
  The following identity holds
  \begin{equation}
      \forall\Ds\in\HROTv{\Omega}:\quad
      \IEh\circ\ROTv(\Ds) = \ROTv\circ\IVh(\Ds),
      \label{eq:ROTCommutes}
  \end{equation}
  i.e., the interpolation and the rotational operators commute. 
\end{lemma}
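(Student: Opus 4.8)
The plan is to exploit the unisolvency of the edge space $\Eh$ established in Theorem~\ref{Thm:EhUnisolvency}: since a function in $\Eh$ is uniquely determined by its degrees of freedom \TERM{E}{} (the edge‑averaged normal fluxes), it suffices to verify that the two functions $\IEh(\ROTv\Ds)$ and $\ROTv(\IVh\Ds)$ carry the same degrees of freedom on every edge $\E$ of the mesh. First I would check that both sides are genuinely elements of $\Eh$, so that this reduction applies: since $\Ds\in\HROTv{\Omega}$ we have $\ROTv\Ds\in[\LTWO(\Omega)]^2$ with $\DIV(\ROTv\Ds)=0$, hence $\ROTv\Ds\in\HDIV{\Omega}$ and $\IEh$ may be applied to it; on the other side $\IVh\Ds\in\Vh$ by definition of $\IVh$, and then $\ROTv(\IVh\Ds)\in\Eh$ by Lemma~\ref{lem:ElectroMagDeRhamImageOfROT}.

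Next I would compute, edge by edge, the degree of freedom of the left‑hand side. Fix a cell $\P\in\Th$ and an edge $\E\in\partial\P$ with endpoints $\Vp,\Vpp$ oriented from $\Vp$ to $\Vpp$. By the very definition of the interpolation operator $\IEh$, the \TERM{E}{} degree of freedom of $\IEh(\ROTv\Ds)$ on $\E$ is $\frac{1}{\mE}\int_\E (\ROTv\Ds)\cdot\nv\,d\ell$. Using the pointwise identity $\nv\cdot\ROTv\Ds=\tv\cdot\nabla\Ds$ (already invoked in the derivation of \eqref{eq:RotofVh}) together with the fundamental theorem of line integrals along $\E$, this equals $\frac{1}{\mE}\big(\Ds(\Vpp)-\Ds(\Vp)\big)$.

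Then I would compute the degree of freedom of the right‑hand side. Because $\ROTv(\IVh\Ds)\in\Eh$, equation~\eqref{eq:RotofVh} applied to the restriction of $\IVh\Ds$ to $\P$ gives that its \TERM{E}{} degree of freedom on $\E$ equals $\frac{1}{\mE}\big((\IVh\Ds)(\Vpp)-(\IVh\Ds)(\Vp)\big)$; and since $\IVh$ reproduces the vertex values of its argument, $(\IVh\Ds)(\V)=\Ds(\V)$ at every vertex, this is again $\frac{1}{\mE}\big(\Ds(\Vpp)-\Ds(\Vp)\big)$. The two degrees of freedom therefore agree on every edge of every cell, so by the unisolvency of $\Eh$ the two functions coincide, which is exactly \eqref{eq:ROTCommutes}.

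The one point requiring care — and the place I expect the real subtlety to sit — is regularity: for the vertex evaluations $\Ds(\V)$ and the edge flux integrals $\int_\E(\ROTv\Ds)\cdot\nv\,d\ell$ to be meaningful for an arbitrary $\Ds\in\HROTv{\Omega}$, one must rely on the continuous extension of the degree‑of‑freedom functionals $\calR_\V$ and $\calR_\E$ from smooth fields (as discussed above Theorem~\ref{Thm:VhUnisolvency}), or equivalently argue first on a dense subspace of sufficiently smooth fields and pass to the limit; once the functionals are interpreted in this extended sense, the identities above are exact and the argument goes through unchanged.
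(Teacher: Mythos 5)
Your proposal is correct and follows essentially the same route as the paper's proof: compare the edge degrees of freedom of $\IEh(\ROTv\Ds)$ and $\ROTv(\IVh\Ds)$ via the identity $\nv\cdot\ROTv\Ds=\tv\cdot\nabla\Ds$ and the fundamental theorem of line integrals, use that $\IVh$ preserves vertex values, and conclude by unisolvency of $\Eh$. Your added checks (that both sides indeed lie in $\Eh$, and the remark on interpreting the degree-of-freedom functionals for general $\Ds\in\HROTv{\Omega}$ by continuous extension) are sound refinements of details the paper leaves implicit.
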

\BEGINPROOF
  Take a scalar function $\Ds\in\HROTv{\Omega}$.
  By definition, the degrees of freedom of $\IEh\circ\ROTv(\Ds)$ in
  $\Eh$ are the same of $\ROTv\Ds$.
  So, if $\E$ is a mesh edge oriented from endpoint $\Vp$ to $\Vpp$,
  the theorem of line integral yields:
  \begin{align}
    \frac{1}{\mE}\int_{\E} \nv\cdot\ROTv\Ds\ d\ell
    = \frac{1}{\mE}\int_{\E} \tv\cdot\nabla\Ds\ d\ell
    = \frac{\Ds(\Vpp)-\Ds(\Vp)}{\mE}.
    \label{eq:Commutelemeq1}
  \end{align}
  In turn, the degrees of freedom of $\ROTv\circ\IVh(\Ds)$ are given
  by
  \begin{align}
    \frac{1}{\mE}\int_{\E} \nv\cdot\ROTv\big(\IVh(\Ds)\big)\ d\ell
    = \frac{1}{\mE}\int_{\E} \tv\cdot\nabla\IVh(\Ds)\ d\ell
    = \frac{\IVh\Ds(\Vpp)-\IVh\Ds(\Vp)}{\mE},
    \label{eq:Commutelemeq2}
  \end{align}
  using again the theorem of line integral yields.
  The definition of operator $\IVh$ is such that 
  \begin{align*}
    \IVh\Ds(\Vp) =\Ds(\Vp)
    \quad\mbox{and}\quad
    \IVh\Dsh(\Vpp)=\Ds(\Vpp).
  \end{align*}
  Thus, equations \eqref{eq:Commutelemeq1} and
  \eqref{eq:Commutelemeq2} imply that the functions
  $\IEh\circ\ROTv(\Ds)$ and $\ROTv\circ\IVh(\Ds)$ have the same
  degrees of freedom in $\Eh$ and relation \eqref{eq:ROTCommutes}
  follows from the unisolvence.
\ENDPROOF

The second lemma involves the spaces $\HDIV{\Omega},\LTWO(\Omega),\Eh$ and $\Ph$.
\begin{lemma}\label{lem:DIVDiagramCommutes}
  The following identity holds
  \begin{equation}
      \forall \Cv\in\HDIV{\Omega}:\quad
      \IPh\circ\DIV(\Cv) = \DIV\circ\IEh(\Cv),
      \label{eq:DIVCommutes}
  \end{equation}
  i.e., the interpolation and the divergence operators commute. 
\end{lemma}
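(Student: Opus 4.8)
The plan is to mimic the proof of Lemma~\ref{lem:ROTDiagramCommutes}: show that the two functions $\IPh\circ\DIV(\Cv)$ and $\DIV\circ\IEh(\Cv)$, which both belong to $\Ph$ by Lemma~\ref{lem:ElectroMagDeRhamImageOfDIV} and the definition of $\IPh$, carry identical degrees of freedom \TERM{D}, and then invoke the (trivial) unisolvency of these degrees of freedom in $\Ph$. Thus the whole argument reduces to a cellwise computation with the divergence theorem.

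First I would fix a cell $\P\in\Th$ and unwind the left-hand side. By the very definition of the interpolation operator $\IPh$, the restriction of $\IPh(\DIV\Cv)$ to $\P$ is the elemental average $\frac{1}{\mP}\int_\P\DIV\Cv\,dA$, which is precisely its own (and only) degree of freedom on $\P$. For the right-hand side, since $\restrict{\IEh(\Cv)}{\P}=\IEhP(\restrict{\Cv}{\P})\in\Eh(\P)$, the space definition~\eqref{eq:Eh:def} guarantees $\DIV(\IEh\Cv)\in\PS{0}(\P)$, so this function is constant on $\P$ and equals its own elemental average. Applying the divergence theorem exactly as in~\eqref{eq:DivofEh}, that constant equals $\frac{1}{\mP}\sum_{\E\in\partial\P}\mE\big(\frac{1}{\mE}\int_\E\IEh\Cv\cdot\nv\,d\ell\big)$, and by the defining property of $\IEhP$ each edge average $\frac{1}{\mE}\int_\E\IEh\Cv\cdot\nv\,d\ell$ coincides with $\frac{1}{\mE}\int_\E\Cv\cdot\nv\,d\ell$. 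Reassembling the edge sum into a single boundary integral and applying the divergence theorem once more — this time for $\Cv$ itself, which is legitimate because $\DIV\Cv\in\LTWO(\Omega)$ — yields $\frac{1}{\mP}\int_{\partial\P}\Cv\cdot\nv\,d\ell=\frac{1}{\mP}\int_\P\DIV\Cv\,dA$, exactly the degree of freedom found for the left-hand side. Since $\P$ was arbitrary, all degrees of freedom \TERM{D} of the two piecewise-constant functions agree, and~\eqref{eq:DIVCommutes} follows from unisolvency.

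The only point requiring care — and the main obstacle — is making the edge integrals $\int_\E\Cv\cdot\nv\,d\ell$ meaningful for a generic $\Cv\in\HDIV{\Omega}$, whose normal trace lives a priori only in $H^{-1/2}(\partial\P)$; this is the same regularity caveat already implicit in the definition of the degrees of freedom \TERM{E} and of the operator $\IEhP$, and it is handled in the usual way, either by first establishing the identity on a more regular subspace dense in $\HDIV{\Omega}$ and passing to the limit, or by interpreting the edgewise quantities through the duality pairing $\langle\cdot,\cdot\rangle_{H^{-1/2}(\partial\P),H^{1/2}(\partial\P)}$, noting that $\DIV\Cv\in\LTWO(\Omega)$ is precisely what makes the global normal trace and the two invocations of the divergence theorem valid. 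Once the edge pairings are on solid footing, the computation above is entirely routine, the divergence theorem doing all of the work; no use of the projections $\PizP{}$ or $\PiEhRT{}$ is needed here.
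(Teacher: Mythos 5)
Your proposal is correct and follows essentially the same route as the paper's own proof: both verify, cell by cell, that $\IPh\circ\DIV(\Cv)$ and $\DIV\circ\IEh(\Cv)$ carry the same degrees of freedom in $\Ph$ by applying the divergence theorem on each element and using the edge-flux preservation built into $\IEh$, then conclude by unisolvency. Your extra remark about interpreting the edge integrals for a general $\Cv\in\HDIV{\Omega}$ via density or the $H^{-1/2}$--$H^{1/2}$ duality is a legitimate refinement of a point the paper leaves implicit, but it does not change the argument.
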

\begin{proof}
  We prove \eqref{eq:DIVCommutes} by verifying that the two
  functions in the left and right side share the same degrees of
  freedom in $\Ph$.
  Take a vector-valued field $\Cv\in\HDIV{\Omega}$.
  By definition, the degrees of freedom of $\IPh\circ\DIV(\Cv)$ in
  $\Eh$ are the same of $\DIV(\Cv)$.
  So, if $\P$ is a mesh cell, the divergence theorem yields
  \begin{align}
    \frac{1}{\mP}\int_\P\DIV\Cv\ dA
    = \frac{1}{\mP}\sum_{\E\in\partial\P}\int_{\E}\Cv\cdot\nv\ d\ell.
    \label{eq:Commutelemeq3}
  \end{align}
  In turn, the degrees of freedom of $\DIV\circ\IEh(\Cv)$ are given by
  \begin{align}
    \frac{1}{\mP}\int_\P\DIV\IEh(\Cv)\ dA
    = \frac{1}{\mP}\sum_{\E\in\partial\P}\int_{\E}\IEh(\Cv)\cdot\nv\ d\ell.
    \label{eq:Commutelemeq4}
  \end{align}
  The definition of operator $\IEh$ is such that 
  \begin{align*}
    \forall\E\in\partial\P:\quad\int_{\E} \IEh\Cv\cdot\nv\ d\ell
    =\int_{\E} \Cv\cdot\nv\ d\ell.
  \end{align*}
  Thus, equations \eqref{eq:Commutelemeq3} and
  \eqref{eq:Commutelemeq4} imply that the two functions
  $\IPh\circ\DIV(\Cv)$ and $\DIV\circ\IEh(\Cv)$ have the same degrees
  of freedom in $\Ph$ and relation \eqref{eq:DIVCommutes} follows from
  the unisolvence.
\end{proof}

We summarize our findings in the following theorem
\begin{theorem}
  \label{Thm:ElectroMagDeRhamComplex}
  The chain in \eqref{eq:VhEhPhChain} is well-defined and exact, and
  diagram \eqref{eq:ElectroDeRhamComplex} is commutative.
\end{theorem}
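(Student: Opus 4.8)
The plan is to obtain the theorem by assembling the four lemmas already established, so that only three things remain to be checked: that the chain \eqref{eq:VhEhPhChain} is well defined, that the two squares of \eqref{eq:ElectroDeRhamComplex} commute, and that \eqref{eq:VhEhPhChain} is exact. Well-definedness of the arrows $\ROTv\colon\Vh\to\Eh$ and $\DIV\colon\Eh\to\Ph$ is exactly the content of the inclusions $\ROTv\Vh\subset\Eh$ and $\DIV\Eh\subset\Ph$ from Lemma~\ref{lem:ElectroMagDeRhamImageOfROT} and Lemma~\ref{lem:ElectroMagDeRhamImageOfDIV}. Commutativity of the two squares is precisely the pair of identities $\IEh\circ\ROTv=\ROTv\circ\IVh$ and $\IPh\circ\DIV=\DIV\circ\IEh$ proved in Lemma~\ref{lem:ROTDiagramCommutes} and Lemma~\ref{lem:DIVDiagramCommutes}. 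Hence the remaining work is exactness.

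Exactness at $\Eh$ means $\ROTv\Vh=\ker(\DIV|_{\Eh})$. The inclusion $\ROTv\Vh\subseteq\ker(\DIV|_{\Eh})$ follows from the identity $\DIV\ROTv\Dsh=0$, valid in every cell for $\Dsh\in\Vh$ and globally since $\ROTv\Dsh\in\Eh\subset\HDIV{\Omega}$ by Lemma~\ref{lem:ElectroMagDeRhamImageOfROT}. For the converse, I would take $\Cvh\in\Eh$ with $\DIV\Cvh=0$; because $\Eh\subset\HDIV{\Omega}$ and $\Omega$ is simply connected, exactness of the continuous chain \eqref{eq:HROTHDIVL2Chain} furnishes $\Ds\in\HROTv{\Omega}$ with $\ROTv\Ds=\Cvh$. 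Setting $\Dsh:=\IVh(\Ds)\in\Vh$ and using Lemma~\ref{lem:ROTDiagramCommutes} gives $\ROTv\Dsh=\ROTv\circ\IVh(\Ds)=\IEh\circ\ROTv(\Ds)=\IEh(\Cvh)=\Cvh$, the last step because $\IEh$ restricts to the identity on $\Eh$ (a consequence of the unisolvency of Theorem~\ref{Thm:EhUnisolvency}); hence $\Cvh\in\ROTv\Vh$. For surjectivity of $\DIV$ onto $\Ph$, given $\qsh\in\Ph\subset\LTWO(\Omega)$ I would choose $\Cv\in\HDIV{\Omega}$ with $\DIV\Cv=\qsh$ from surjectivity of the continuous divergence, set $\Cvh:=\IEh(\Cv)\in\Eh$, and invoke Lemma~\ref{lem:DIVDiagramCommutes} to get $\DIV\Cvh=\IPh\circ\DIV(\Cv)=\IPh(\qsh)=\qsh$, since $\IPh$ fixes piecewise constants. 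Together with $\ker(\ROTv|_{\Vh})$ being the global constants (matching the continuous kernel), this makes the discrete chain exact.

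The genuinely delicate point — and the main obstacle — is not any of the manipulations above but the legitimacy of applying $\IVh$ and $\IEh$ to the \emph{generic} representatives $\Ds\in\HROTv{\Omega}$ and $\Cv\in\HDIV{\Omega}$ produced in the argument: the defining degrees of freedom (vertex values for $\Vh$, edge-averaged normal fluxes for $\Eh$) need slightly more regularity than mere membership in the energy space to be meaningful on the mesh skeleton. I would dispatch this exactly as the paper already does, by appealing to the continuous extensions of $\calR_\V$ and $\calR_\E$ announced before Theorem~\ref{Thm:VhUnisolvency} and Theorem~\ref{Thm:EhUnisolvency} — equivalently, by running the argument first on the dense subspace of smooth fields and passing to the limit, all the maps involved ($\ROTv$, $\DIV$, and the interpolation operators) being continuous for the relevant norms. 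Once this technical caveat is granted, the three verifications above complete the proof.
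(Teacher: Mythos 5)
Your proposal is correct and follows essentially the same route as the paper's own proof: well-definedness from Lemmas~\ref{lem:ElectroMagDeRhamImageOfROT} and \ref{lem:ElectroMagDeRhamImageOfDIV}, commutativity from Lemmas~\ref{lem:ROTDiagramCommutes} and \ref{lem:DIVDiagramCommutes}, and exactness at $\Eh$ by pulling a divergence-free $\Cvh$ back through the exact continuous chain and applying the commuting-interpolation identity $\ROTv\circ\IVh=\IEh\circ\ROTv$ together with $\IEh\Cvh=\Cvh$. The extra items you add (surjectivity of $\DIV$ onto $\Ph$, the kernel of $\ROTv$ on $\Vh$, and the regularity caveat for interpolating generic fields in $\HROTv{\Omega}$ and $\HDIV{\Omega}$) go slightly beyond what the paper records but do not alter the argument.
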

\begin{proof}
  Lemmas~\ref{lem:ElectroMagDeRhamImageOfROT} and \ref{lem:ElectroMagDeRhamImageOfDIV} prove that 
  \eqref{eq:VhEhPhChain} is well-defined.
  Lemmas~\ref{lem:ROTDiagramCommutes} and \ref{lem:DIVDiagramCommutes} prove that diagram
  \eqref{eq:ElectroDeRhamComplex} is commutative.
  Hence, we are only left to prove that the de~Rham chain
  \eqref{eq:VhEhPhChain} is exact, or, equivalently
  that
  \begin{align}
    \ROTv\Vh
    = \KER\big(\DIV\Eh\big)
    = \big\{ \Cvh\in\Eh:\DIV\Cvh=0 \big\}.
    \label{eq:RotVheqkerDiv}
  \end{align}
  Take $\Dsh\in\Vh$.
  Lemma~\ref{lem:ElectroMagDeRhamImageOfROT} implies that
  $\ROTv\Dsh\in\Eh$, and, obviously, $\DIV\ROTv\Dsh=0$, so that
  $\ROTv\Dsh\in\KER\big(\DIV\Eh\big)$ as defined in
  \eqref{eq:RotVheqkerDiv}, which implies that
  $\ROTv\Vh\subseteq\KER\big(\DIV\Eh\big)$.
  Next, consider $\Cvh\in\Eh$ with $\DIV\Cvh=0$.
  Since $\Eh\subset\HDIV{\Omega}$, then $\Cvh\in\HDIV{\Omega}$ and the
  exactness of chain \eqref{eq:HROTHDIVL2Chain} implies the existence
  of a scalar function $\Ds\in\HROTv$ such that $\Cvh=\ROTv\Ds$.
  Moreover, $\IVh\Ds\in\Vh$ must verify
  \begin{equation}
    \ROTv\circ\IVh\Ds
    = \IEh\circ\ROTv\Ds
    = \IEh\Cvh
    = \Cvh,
  \end{equation}
  which implies that $\Cvh$ is the rotational of a function of $\Vh$
  and, thus, $\KER\big(\DIV\Eh\big)\subset\ROTv\Vh$.
\end{proof}

\medskip

\subsection{Fluid Flow}
\label{subsec:ElementsFluidFlow}

In this section, we briefly review the virtual element spaces for the
discretization of the fluid-flow equations in the MHD model.
These spaces were originally proposed
in~\cite{Vacca:2018,BeiraodaVeiga-Lovadina-Vacca:2017,BeiraodaVeiga-Lovadina-Vacca:2018}.

The first virtual element space is used to discretize the pressure.
We consider a subspace of $\Ph$ as defined in Section~\ref{subsec:CellSpace}.
This subspace is given by
\begin{align}
  \Phzr = \left\{ \qsh\in\Ph:\int_\Omega\qsh\ dA = 0\right\}.
  \label{eq:PhzrDOF}
\end{align}
The degrees of freedom of a function $\qsh\in\Phzr$ are given by
\begin{itemize}
\item[]\TERM{P'}{} $\qquad\displaystyle\int_\P\qsh\ dA\quad$ for every $\P\in\Th$.
\end{itemize}
These degrees of freedom are the same of $\Ph$ up to a multiplicative
scale factor equal to $1\slash{\mP}$.
For $\Phzr$ we prefer this definition because the
integral over $\Phzr$ of a function $\qsh\in\Phzr$ is given by
summing the  degrees of freedom of $\qsh$:
\begin{align*}
  \int_\Omega\qsh\ dA = \sum_{i=0}^N\mbox{dof}_i(\qsh).
\end{align*}
\medskip
If we enumerate the cells in the mesh $\Omega_h$ as $\{P_i:1\le i\le N\}$ then 
the functions $\mbox{dof}_i:\Phzr\to\REAL$ map each function in $\Phzr$ to the degree of freedom associated with $P_i$.
\\

The virtual element space for the velocity approximation reads as
\begin{subequations}
  \begin{align}
    \Vvh(\P)
    &=\Big\{
    \vvh\in\Honev{\P}\,:\,\restrict{\vvh}{\partial\P}\in\big[\Bspace{\partial\P}\!\big]^2,\,
    \DIV\vvh\in\PS{0}(\P),
    \nonumber\\
    &\hspace{3cm}
    -\Delta\vvh-\nabla\ss=\zrv\,\,\textrm{for~some~}
    \ss\in\LTWOzr(\P)
    \Big\},
    \intertext{where}
    \Bspace{\partial\P}
    &=\Big\{\vs\in\CS{0}(\partial\P):\,
    \restrict{\vs}{\E}\in\PS{2}(\E)
    \,\,\forall\E\in\partial\P
    \Big\}.
  \end{align}
\end{subequations}

A function $\vvh\in\Vvh(\P)$ is uniquely characterized by the
following degrees of freedom:

\begin{itemize}
\item[]\DOFS{\Dv1}{} pointwise evaluations of $\vvh$ at the vertices of
  $\P$;

  \smallskip
\item[]\DOFS{\Dv2}{} pointwise evaluations at $\vvh$ at the midpoint of
  the edges of $\partial\P$.
\end{itemize}
These are represented by blue disks centered at the nodes and the
mid-point of edges of the cell $\P$, see the sample picture in
Figure~\ref{fig:TVhDOF}.
\begin{figure}[t]
  \centering
  \includegraphics[scale=0.5]{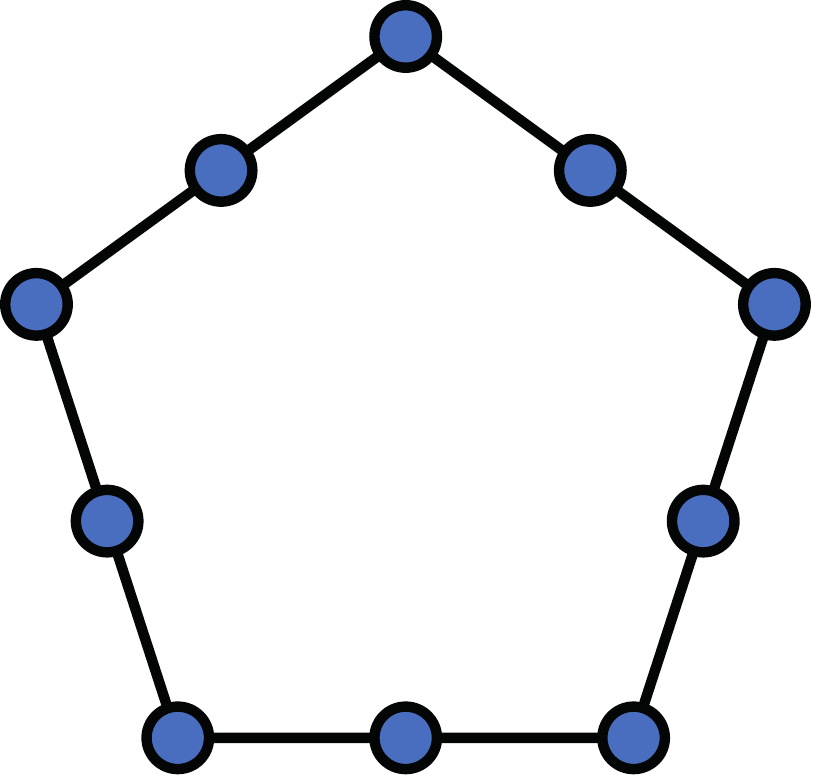}
  \caption{Representation of the degrees of freedom of functions in
    $\Vvh(\P)$.}
  \label{fig:TVhDOF}
\end{figure}
\begin{theorem}
  Define the map $\calK_{\Vvh}:\Vvh(\P)\to\REAL^N$ such that for any
  $\vvh\in\Vvh(\P)$ the array $\calK_{\Vvh}\vvh$ is given by the
  degrees of freedom pf $\vvh$.
  Then, $\calK_{\Vvh}$ is bijective.
\end{theorem}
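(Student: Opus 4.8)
The plan is to reduce the bijectivity of $\calK_{\Vvh}$ to two facts: first, that the full array of degrees of freedom \DOFS{\Dv1}{} and \DOFS{\Dv2}{} is in one-to-one correspondence with the boundary trace $\restrict{\vvh}{\partial\P}\in\big[\Bspace{\partial\P}\big]^2$; and second, that this boundary trace, together with the structural constraints defining $\Vvh(\P)$, determines $\vvh$ uniquely in the interior of $\P$ through a Stokes problem. Injectivity and surjectivity of $\calK_{\Vvh}$ will then follow, respectively, from the uniqueness and the solvability of that Stokes problem, so $\calK_{\Vvh}$ will be bijective; this is the same pattern used to establish unisolvency of the divergence-free velocity spaces of~\cite{BeiraodaVeiga-Lovadina-Vacca:2017,BeiraodaVeiga-Lovadina-Vacca:2018,Vacca:2018}.

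For the boundary step I would argue edge by edge. On each edge $\E\in\partial\P$, every component of $\restrict{\vvh}{\E}$ lies in $\PS{2}(\E)$, and a quadratic on a segment is uniquely determined by its (vectorial) values at the two endpoints and at the midpoint; conversely, any prescription of these three values is attained by exactly one such quadratic. Assembling the edgewise bijections and observing that the two edges incident to a vertex share the single vertex datum \DOFS{\Dv1}{} there — so the assembled piecewise-quadratic function is continuous on $\partial\P$, i.e.\ belongs to $\big[\Bspace{\partial\P}\big]^2$ — shows that the trace map $\vvh\mapsto\restrict{\vvh}{\partial\P}$ intertwines the full degree-of-freedom array with $\big[\Bspace{\partial\P}\big]^2$ bijectively. (Note all degrees of freedom of $\Vvh(\P)$ are boundary data.)

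For the interior step, fix $g\in\big[\Bspace{\partial\P}\big]^2$. By the divergence theorem together with $\DIV\vvh\in\PS{0}(\P)$, the constant $\DIV\vvh$ is forced to equal $c:=\mP^{-1}\int_{\partial\P} g\cdot\nv\ d\ell$. Hence any $\vvh\in\Vvh(\P)$ with $\restrict{\vvh}{\partial\P}=g$ solves, with its associated $\ss\in\LTWOzr(\P)$, the Stokes-type system $-\Delta\vvh-\nabla\ss=\zrv$, $\DIV\vvh=c$ in $\P$, $\vvh=g$ on $\partial\P$, and the compatibility $\int_\P c\ dA=\int_{\partial\P}g\cdot\nv\ d\ell$ holds by the choice of $c$. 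Since $\P$ is star-shaped with respect to a disk by assumption \textbf{(M1)}, it is a Lipschitz domain and this boundary value problem has a unique solution $(\vvh,\ss)$ with $\ss$ of zero mean. Taking $g=0$ (hence $c=0$) forces $\vvh=\zrv$, giving injectivity of $\calK_{\Vvh}$; solvability for arbitrary $g$ produces, from any prescribed degree-of-freedom array, a function $\vvh\in\Vvh(\P)$ realizing it — the resulting $\vvh$ meets every defining constraint of $\Vvh(\P)$ by construction — which gives surjectivity. Together these yield bijectivity of $\calK_{\Vvh}$, and in passing $\dim\Vvh(\P)=N$.

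The main obstacle is the well-posedness invoked in the interior step: solvability of the Stokes equations on the polygon $\P$ with an inhomogeneous Dirichlet datum and an inhomogeneous (constant) divergence. The standard route is to lift the data — using a bounded right inverse of the divergence (a Bogovskii-type operator) on the Lipschitz domain $\P$ to absorb both the nonzero trace $g$ and the constant $c$ — and then solve the resulting homogeneous Stokes problem, whose unique solvability rests on the inf--sup (LBB) condition valid on Lipschitz domains; the regularity $g\in\big[\Bspace{\partial\P}\big]^2\subset\big[H^{1/2}(\partial\P)\big]^2$ required for the lifting is immediate since $g$ is continuous and piecewise polynomial. One should also note that, although the sign in $-\Delta\vvh-\nabla\ss=\zrv$ differs from the textbook convention, replacing $\ss$ by $-\ss$ reduces it to the classical Stokes system, so no new analysis is needed.
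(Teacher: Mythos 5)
Your proposal is correct, and it is essentially the argument the paper relies on: the paper omits the proof and defers to~\cite{BeiraodaVeiga-Lovadina-Vacca:2017}, where unisolvency is established in exactly this way — the vertex and midpoint values determine the piecewise-quadratic trace edge by edge, and the interior function is then the unique solution of a Stokes problem with that Dirichlet datum and the compatible constant divergence, giving injectivity from uniqueness and surjectivity from solvability. Your treatment of the forced constant $\DIV\vvh$, the zero-mean pressure, and the sign convention is accurate, so the write-up supplies precisely the details the paper leaves to the reference.
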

\BEGINPROOF
The proof is omitted and can be found 
in~\cite{BeiraodaVeiga-Lovadina-Vacca:2017}.
\ENDPROOF

The largest polynomial space that is contained in $\Vvh(\P)$ is the
space of divergence-free, quadratic polynomial vectors, which is
formally written as:
\begin{align*}
  \PSv{}(\P) = \left\{
  \qv\in\big[\PS{2}(\P)\big]^2:\DIV\qv\in\PS{0}(\P) \right\}.
\end{align*}
Let $\calEP$ denote the set of vertices and midpoints of the edges
forming the polygonal boundary $\partial\P$, and consider the
projector $\PinP:\Vvh(\P)\to\PSv{}(\P)$ such that the vector
polynomial $\PinP\vvh$ for $\vvh\in\Vvh(\P)$ is the solution to the
following variational problem
\begin{subequations}
  \begin{align}
    \int_{\P}\nabla\PinP\vvh:\nabla\qv\ dA &= \int_{\P}\nabla\vvh:\nabla\qv\ dA
    \quad\forall\qv\in\PSv{}(\P),
    \label{Eq:EllipticProjTVh:a}
    \\[0.5em]
    P_0 \left(\PinP\vvh \right) &= \P_0(\vvh),
    \label{Eq:EllipticProjTVh:b}
  \end{align}
\end{subequations}
where
\begin{equation}
    P_0(\vvh)=\sum_{\V\in\calEP}\vvh(\V).
\end{equation}
We recall that $\nabla\vvh$, $\nabla\PinP\vvh$ and $\nabla\qv$
are $2\times2$-sized tensors and ``:'' is the usual euclidean scalar
product saturating both indices of such tensors, so that
\begin{equation}
    \nabla\vv:\nabla\wv=\sum_{i,j}(\partial\vs_i\slash\partial\xs_j)(\partial\ws_i\slash\partial\xs_j).
\end{equation}
To prove that this projection operator is computable, we need to show
that the right-hand side of \eqref{Eq:EllipticProjTVh:a} is computable
for every vector-valued field $\vvh\in\Vvh(\P)$ and $\qv\in\PSv{}(\P)$
using only the degrees of freedom of $\vvh$.
To this end, we first apply the Green theorem to find that
\begin{align}
  \int_{\P}\nabla\vvh:\nabla\qv\ dA
  = \int_{\partial\P}\vvh\nabla\qv\cdot\nv\ d\ell
  - \int_P\vvh\cdot\Delta\qv\ dA
  \label{eq:TVhProjComputing1}
\end{align}
Then, we note that $\Delta\qv\in\big[\PS{0}(\P)\big]^2$ and the scalar
polynomial $\gs$ satisfying that
\begin{equation}
    \gs=\Delta\qv\cdot(\xv-\xvP)\quad\mbox{with}\quad\
    \nabla\gs=\Delta\qv\quad\mbox{and}\quad
    \int_{\P}\gs(\xv)\ dA=0.
\end{equation}

Using this identity in~\eqref{eq:TVhProjComputing1} we see that
\begin{align*}
  \int_{\P}\nabla\vvh:\nabla\qv\ dA
  &
  = \int_{\partial\P}\vvh\cdot\nabla\qv\cdot\nv\ d\ell
  - \int_P\vvh\cdot\Delta\qv\ dA
  \\[0.5em]
  &
  = \int_{\partial\P}\vvh\cdot\nabla\qv\cdot\nv\ d\ell
  + \int_{\P}(\DIV\vvh)\gs\ dA
  - \int_{\partial\P}\gs\vvh\cdot\nv\ d\ell
  \\[0.5em]
  &= \TERM{T1}{} + \TERM{T2}{} + \TERM{T3}{}.
\end{align*}
The boundary integrals \TERM{T1}{} and \TERM{T3}{} are computable
since the trace of $\vvh$ on every edge of $\partial\P$ can be
interpolated from its degrees of freedom, while the cell integral
\TERM{T2}{} is zero because $\DIV\vvh\in\PS{0}(\P)$ and we have that:
\begin{align*}
  \TERM{T2}{}
  = \int_{\P}(\DIV\vvh)\gs\ dA
  = \restrict{(\DIV\vvh)}{\P}\int_{\P}\gs\ dA
  = 0.
\end{align*}
Moreover, we can see that $\DIV\vvh$ is also computable on using the
degrees of freedom of $\vvh$ and the divergence theorem:
\begin{align*}
  &\restrict{(\DIV\vvh)}{\P}\,\mP
  = \int_{\P}\DIV\vvh\ dA
  = \int_{\partial\P}\vvh\cdot\nv\ d\ell
  \intertext{so~that}
  &\restrict{(\DIV\vvh)}{\P} 
  = \frac{1}{\mP}\sum_{\E\in\partial\P}\int_{\E}\vvh\cdot\nv\ d\ell.
\end{align*}
This formula makes it possible to compute the divergence of $\vvh$
using only the boundary information that can be extracted from
\DOFS{\Dv1}{} and \DOFS{\Dv2}{}.

\medskip
To approximate the terms of the MHD variational formulation that
depends on the time derivative of the velocity field, we need the
$\LTWO$-orthogonal projection of the virtual element vector-valued
fields.
However, such projection operator is not directly computable in the
space $\Vvh(\P)$, so we change the definition of the space according
to the enhancement strategy in~\cite{Vacca:2018} that we briefly
review below.
First, we consider the auxiliary finite dimensional functional spaces
\begin{align}
  \Gspace{2}{\P} &= \nabla\PS{3}(\P)
  \intertext{and its orthogonal complement in $[\PS{2}(\P)]^2$}
  \Gperp {2}{\P} &=
  \big\{
  \gvp\in[\PS{2}(\P)]^2:\,\scalP{\gvp}{\gv}=0
  \,\,\forall\gv\in\Gspace{2}{\P}
  \big\}.
\end{align}
Then, we introduce the ``extended'' virtual element space
\begin{align}
\nonumber
  \Uvh(\P)
  =
  \Big\{
  \vvh\in\Honev{\P}:\,
  &
  \restrict{\vvh}{\partial\P}\in
  \left[\Bspace{\partial\P}\right]^2,\,\,\DIV\vvh\in\PS{0}(\P)
  \\[0.5em]
  &\hspace{-0.5cm}
  -\Delta\vvh-\nabla s = \gvp\,\textrm{for~some~}s\in\LTWOzr(\P),\,\gvp\in\Gperp{2}{\P}
  \Big\}.
\end{align}
The projector $\PinP$ can also defined in $\Uvh(\P)$ and is computable
using only the information from the degrees of freedom
\DOFS{\Dv1}{}-\DOFS{\Dv2}{}.
However, the degrees of freedom \DOFS{\Dv1}{}-\DOFS{\Dv2}{} are \emph{not}
unisolvent in $\Uvh(\P)$.
Instead, they are unisolvent in $\TVh(\P)$, the subspace of $\Uvh(\P)$
that is formally defined as
\begin{align}
  \TVh(\P)=
  \Big\{
  \vvh\in\Uvh(\P):\,
  \ScalP{\vvh-\PinP\vvh}{\gvp}=0\,\,
  \forall\gvp\in\Gperp{2}{\P}/\REAL^2
  \Big\}.
\end{align}

Since the degrees of freedom \DOFS{\Dv1}{}-\DOFS{\Dv2}{} are
unisolvent in this space, we can define the elemental interpolation
operator $\ITVhP:\big[\HONE(\P)\big]^2\to\TVh(\P)$ such that
$\uv\in\big[\HONE(\P)\big]^2$ and $\ITVhP(\uv)\in\TVh(\P)$ have the
same degrees of freedom.

\medskip
We define the $\LTWO$-orthogonal projection operator
$\PizP:\TVh(\P)\to\PSv{}(\P)$, so that for every $\vvh\in\TVh(\P)$,
the vector polynomial $\PizP{}\vvh$ is the solution to the variational
problem
\begin{align}
  \scalP{\PizP\vvh-\vvh}{\qv} = 0.
  \quad\forall\qv\in\PSv{}(\P).
\end{align}
We show that $\PizP\vvh$ is computable using only the degrees of
freedom \DOFS{\Dv1}{}-\DOFS{\Dv2}{} of $\vvh$.
Let $\vvh\in\TVh(\P)$ and $\qv\in\PSv{}(\P)$.
We consider the decomposition $\qv=\nabla\gs+\gvp$ where
$\gs\in\PS{3}(\P)$ and apply the Green theorem to obtain:
\begin{align}
  \int_{\P}\vvh\cdot\qv\ dA
  &
  = \int_{\P}\vvh\cdot\nabla\gs\ dA
  + \int_{\P}\vvh\cdot\gvp\ dA.
  \\[0.5em]
  &
  = \int_{\partial\P}\vvh\cdot\nv\gs\ d\ell
  - \int_{\P}\DIV\vvh\gs\ dA
  + \int_{\P}\vvh\cdot\gvp\ dA.
  \label{eq:TVhIntegrals}
\end{align}
The boundary integral on the right is computable as the trace of
$\vvh$ on every edge is a quadratic polynomial that can be
interpolated from the degrees of freedom freedom
\DOFS{\Dv1}{}-\DOFS{\Dv2}{} of $\vvh$.
The second integral in the right-hand side is zero because
$\restrict{\DIV\vvh}{\P}\in\PS{0}(\P)$ and we can always take a
function $\gs$ with zero elemental average (alternatively, it can be
computed by noting that the divergence of $\vvh$ is given by the
divergence theorem).
To compute the second integral in \eqref{eq:TVhIntegrals} we first
write
\begin{align*}
  \gvp = \cv+(\gvp-\cv),\quad
  \cv  = \frac{1}{|\P|}\int_{\P}\gvp\ dA.
\end{align*}
Since $\cv\in\PS{0}(\P)$ we can find $\qs\in\PS{1}(\P)$ such that
$\cv=\nabla\qs$ and see that
\begin{align*}
  \int_{\P}\vvh\cdot\cv\ dA
  = \int_{\partial\P}\qs\vvh\cdot\nv\ d\ell
  - \int_{\P} \DIV\vvh\qs\ dA.
\end{align*}
Finally, we note that $\gvp-\cv\in\Gperp{2}{\P}/\REAL^2$, and the
definition of space $\TVh(\P)$ implies that
\begin{align*}
  \int_{\P} \vvh\cdot (\gvp-\cv)\ dA = \int_{\P}\PinP\vvh\cdot(\gvp-\cv)\ dA.
\end{align*}
The last integral is computable because $\PinP\vvh$ is computable from
the degrees of freedom of $\vvh$ and $\gvp-\cv$ is a known function.

\medskip
We use the projection operators $\PizP$ and $\PinP$ to define the
inner product and semi-inner product in $\TVh(\P)$ as follows
\begin{align}
  \scalTVhP{\uvh}{\vvh}
  &
  = \scalP{\PizP\uvh}{\PizP\vvh}
  + \calS^{\TVh}_{\P}\big( (1-\PizP)\uvh,(1-\PizP)\vvh \big),
  \label{Eq:TVhLocalInProds:A}
  \\[0.5em]
  \scalTVhPNa{\uvh}{\vvh}
  &
  = \scalP{\nabla\PinP\uvh}{\nabla\PinP\vvh}
  + \calT^{\TVh}_{\P}\big( \nabla(1-\PinP)\uvh,\nabla(\calI-\PinP)\vvh \big),
  \label{Eq:TVhLocalInProds:B}
\end{align}
for every $\uvh$, $\vvh$ in $\TVh(\P)$.
Here, $\calS^{\TVh}_\P$ and $\calT^{\TVh}_{\P}$ are the stabilizing
terms, i.e., any continuous bilinear forms for which there exist two
pairs of strictly positive constants $(t_*,t^*)$ and $(s_*,s^*)$,
which are independent of $\hh$, such that
\begin{align}
  s_*\NORM{\vvh}{0,\P}^2
  \leq\calS^{\TVh}_{\P}(\vvh,\vvh)\leq
  s^*\NORM{\vvh}{0,\P}^2
  & \quad\forall\vvh\in\TVh(\P)\cap\KER(\PizP{}),
  \label{eq:TVh:inner:product:stability}
  \\[0.5em]
  t_*\NORM{\nabla\vvh}{0,\P}^2
  \leq\calT^{\TVh}_{\P}(\nabla\vvh,\nabla\vvh)\leq
  t^*\NORM{\nabla\vvh}{0,\P}^2
  & \quad\forall\vvh\in\TVh(\P)\cap\KER(\PinP{}).
  \label{eq:TVh:semi:inner:product:stability}
\end{align}
In practice, we can design such stabilizations as
in~\cite{Mascotto:2018,Dassi-Mascotto:2018}.
The inner and semi-inner products respectively defined in
\eqref{eq:TVh:inner:product:stability} and
\eqref{eq:TVh:semi:inner:product:stability}
satisfy two fundamental properties, e.g., \textbf{Polynomial
  Consistency} and \textbf{Stability}, which are stated in the
following theorem.
\begin{theorem}
  \label{thm:ConsistencyStabilityTVhP}
  The inner product defined in~\eqref{eq:TVh:inner:product:stability} and
  semi-inner products defined in~\eqref{eq:TVh:semi:inner:product:stability}
  have the two properties:
  \begin{itemize}
  \item \textbf{Polynomial Consistency}: for every vector-valued field
    $\vvh\in\TVh(\P)$ and polynomial $\qv\in[\PS{2}(\P)]^2$ it holds
    that:
    \begin{align}
      \scalTVhP{\vvh}{\qv}   = \scalP{\vvh}{\qv}
      \quad\textrm{and}\quad
      \scalTVhPNa{\vvh}{\qv} = \scalP{\nabla\vvh}{\nabla\qv}.
    \end{align}
  \item \textbf{Stability}: there exists a pairs of strictly positive
    constants $(t_*,t^*)$ independent of $\hh$ such that
    \begin{align}
      t_*\NORM{\vvh}{0,\P}^2
      \leq\scalTVhP{\vvh}{\vvh}\leq
      t^*\NORM{\vvh}{0,\P}^2,
      \intertext{and}
      t_*\NORM{\nabla\vvh}{0,\P}^2
      \leq\scalTVhPNa{\vvh}{\vvh}\leq
      t^*\NORM{\vvh}{0,\P}^2.
      \label{Eq:LocalStabTVh}
    \end{align}
    for any $\vvh\in\TVh(\P)$.
  \end{itemize}
\end{theorem}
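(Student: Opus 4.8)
The plan is to prove the two assertions of Theorem~\ref{thm:ConsistencyStabilityTVhP} separately, and within each to reuse the two-step template of the proof of Theorem~\ref{Thm:EquivalentInProdsVh(P)}: polynomial preservation of the projectors for the \emph{consistency} part, and an ``add-and-subtract the projection'' estimate combined with the spectral equivalence of the stabilizations for the \emph{stability} part. Everything rests on two structural facts. First, both $\PizP$ and $\PinP$ reproduce every element of $\PSv{}(\P)$: for $\PizP$ this is immediate since it is the $L^2(\P)$-orthogonal projection onto $\PSv{}(\P)$, and for $\PinP$ it follows by writing \eqref{Eq:EllipticProjTVh:a} for a polynomial $\rv\in\PSv{}(\P)$ in place of $\vvh$ and testing it against $\qv=\PinP\rv-\rv\in\PSv{}(\P)$, which forces $\nabla(\PinP\rv-\rv)=\zrv$, after which \eqref{Eq:EllipticProjTVh:b} pins the remaining constant to zero. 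Second, $\PizP$ is an $L^2(\P)$-orthogonal projection and, by \eqref{Eq:EllipticProjTVh:a}, $\nabla\PinP(\cdot)$ is the $L^2(\P)$-orthogonal projection of $\nabla(\cdot)$ onto the closed subspace $\nabla\PSv{}(\P)$; by Pythagoras this gives $\NORM{\PizP\vvh}{0,\P}\le\NORM{\vvh}{0,\P}$, $\NORM{(\calI-\PizP)\vvh}{0,\P}\le\NORM{\vvh}{0,\P}$, $\NORM{\nabla\PinP\vvh}{0,\P}\le\NORM{\nabla\vvh}{0,\P}$ and $\NORM{\nabla(\calI-\PinP)\vvh}{0,\P}\le\NORM{\nabla\vvh}{0,\P}$.

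\emph{Polynomial consistency.} Let $\vvh\in\TVh(\P)$ and let $\qv$ be in the polynomial space $\PSv{}(\P)$, which is contained in $\TVh(\P)$. By the first fact, $(\calI-\PizP)\qv=\zrv$ and $(\calI-\PinP)\qv=\zrv$, so the stabilization contributions in \eqref{Eq:TVhLocalInProds:A} and \eqref{Eq:TVhLocalInProds:B} vanish, leaving $\scalTVhP{\vvh}{\qv}=\scalP{\PizP\vvh}{\qv}$ and $\scalTVhPNa{\vvh}{\qv}=\scalP{\nabla\PinP\vvh}{\nabla\qv}$. The former equals $\scalP{\vvh}{\qv}$ because $\PizP$ is the $L^2(\P)$-orthogonal projection onto a space containing $\qv$; the latter equals $\scalP{\nabla\vvh}{\nabla\qv}$ by the defining identity \eqref{Eq:EllipticProjTVh:a} of $\PinP$. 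As in the remark following Theorem~\ref{Thm:EhPInProdProperties}, it is the orthogonality of the projectors that lets consistency hold when only one of the two arguments is a polynomial.

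\emph{Stability.} For the $L^2$-type product, decompose $\vvh=\PizP\vvh+(\calI-\PizP)\vvh$; the triangle inequality and $2(a^2+b^2)\ge(a+b)^2$ give $\NORM{\vvh}{0,\P}^2\le 2\NORM{\PizP\vvh}{0,\P}^2+2\NORM{(\calI-\PizP)\vvh}{0,\P}^2$, and bounding the last term from below by $s_*^{-1}\calS^{\TVh}_{\P}\big((\calI-\PizP)\vvh,(\calI-\PizP)\vvh\big)$ through the left inequality in \eqref{eq:TVh:inner:product:stability} produces the lower bound with constant $t_*=\big(2\max(1,s_*^{-1})\big)^{-1}$. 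For the upper bound, $\scalTVhP{\vvh}{\vvh}=\NORM{\PizP\vvh}{0,\P}^2+\calS^{\TVh}_{\P}\big((\calI-\PizP)\vvh,(\calI-\PizP)\vvh\big)\le\NORM{\vvh}{0,\P}^2+s^*\NORM{(\calI-\PizP)\vvh}{0,\P}^2\le(1+s^*)\NORM{\vvh}{0,\P}^2$, using the second fact above and the right inequality in \eqref{eq:TVh:inner:product:stability}. The semi-inner product $\scalTVhPNa{\cdot}{\cdot}$ is treated identically after replacing $\PizP$, $\NORM{\cdot}{0,\P}$, $\calS^{\TVh}_{\P}$ and \eqref{eq:TVh:inner:product:stability} by $\PinP$, $\NORM{\nabla\cdot}{0,\P}$, $\calT^{\TVh}_{\P}$ and \eqref{eq:TVh:semi:inner:product:stability}, and invoking the $H^1(\P)$-seminorm orthogonality of $\PinP$; this yields two-sided control of $\scalTVhPNa{\vvh}{\vvh}$ by $\NORM{\nabla\vvh}{0,\P}^2$, with constants depending only on $t_*,t^*$ (so the right-hand side of the second displayed bound in \eqref{Eq:LocalStabTVh} is read with $\NORM{\nabla\vvh}{0,\P}^2$).

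\emph{Main obstacle.} There is no genuinely hard step once the two structural facts are established; the computation is routine and mirrors Theorem~\ref{Thm:EquivalentInProdsVh(P)}. The one point that needs care is that $\PinP$ is orthogonal only in the $H^1(\P)$-seminorm, hence it is \emph{not} a priori bounded in $L^2(\P)$ uniformly in $\hh$ (no Poincaré estimate is used, and none is available at the local level without a boundary constraint); consequently the $L^2$-type and the $H^1$-type estimates must be carried out in parallel and cannot be mixed. A minor secondary subtlety is that the consistency identities hold against the genuinely contained polynomial space $\PSv{}(\P)$ — the largest polynomial subspace of $\TVh(\P)$ — which is the intended reading of ``$\qv\in[\PS{2}(\P)]^2$'' here, since $\scalTVhP{\cdot}{\cdot}$ and $\scalTVhPNa{\cdot}{\cdot}$ are defined only on $\TVh(\P)\times\TVh(\P)$.
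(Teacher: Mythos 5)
Your proposal is correct and follows essentially the same route as the paper, which simply invokes the argument of Theorem~\ref{Thm:EquivalentInProdsVh(P)}: polynomial preservation of the projectors kills the stabilization terms for consistency, and the add-and-subtract decomposition together with the spectral bounds on $\calS^{\TVh}_{\P}$ and $\calT^{\TVh}_{\P}$ gives the two-sided stability estimates. Your added observations — that the $L^2$- and $H^1$-seminorm orthogonality of $\PizP$ and $\PinP$ yield the bounds with constant one (so no analogue of $\Cs_{\Pi}$ is needed), that consistency is to be read against $\PSv{}(\P)$, and that the right-hand side of the second bound in \eqref{Eq:LocalStabTVh} should carry $\NORM{\nabla\vvh}{0,\P}^2$ — are accurate refinements of the same argument.
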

\BEGINPROOF
The proof of this theorem uses the same argument of the proof of
Theorem~\ref{Thm:EquivalentInProdsVh(P)} and is, thus, omitted.
\ENDPROOF
\medskip
The global space $\TVh$ is given by
\begin{align*}
  \TVh = \big\{
  \vvh\in\Honev{\Omega}:\forall\P\in\Omega_h\quad\restrict{\vvh}{\P}\in\TVh(\P)
  \big\}.
\end{align*}
We will endow this space with an inner product and a semi-inner product beginning with their local definitions.
\begin{align*}
  \scalTVh{\uvh}{\vvh}   &= \sum_{\P\in\Omega_h}\scalTVhP{\uvh}{\vvh}
  \quad\forall\uvh,\vvh\in\TVh,
  \\[0.5em]
  \scalTVhNa{\uvh}{\vvh} &= \sum_{\P\in\Omega}\scalTVhPNa{\uvh}{\vvh}
  \quad\forall\uvh,\vvh\in\TVh.
\end{align*}
These inner product and semi-inner product induce the norms and
semi-norm
\begin{subequations}
  \begin{align}
    \TNORM{\vvh}{\TVh} & = \scalTVh{\vvh}{\vvh}^{1/2},
    \quad\snorm{\vvh}{\TVh} = \scalTVhNa{\vvh}{\vvh}^{1/2},\\[0.5em]
    \TNORM{\vvh}{1,\TVh} & = \left(\TNORM{\vvh}{\TVh}^2 + \snorm{\vvh}{\TVh}^{2}\right)^{1/2}.
  \end{align}
  \label{Eq:GlobalTVhForms}
\end{subequations}
The norm in the topological dual space of $\TVhzr$ denoted by
$\TVhzr^{\,\prime}$ is:
\begin{align*}
  \TNORM{\fvh}{-1,\TVh}
  = \sup_{\vvh\in\TVhzr\setminus\{0\}}
  \frac{\scalTVh{\fvh}{\vvh}}{\snorm{\vvh}{\TVh}}.
\end{align*}
The global inner product and semi-inner product and their induced norm
and semi-norm also satisfy the \textbf{consistency} and
\textbf{stability} properties as stated in the following corollary.

\begin{corollary}
  \label{cor:EquivalentNormsTVh}
  The norms and semi-norm in \eqref{Eq:GlobalTVhForms} are equivalent
  to the $[\LTWO(\Omega)]^2$ and $\Honev{\Omega}$ inner products and
  semi-inner product respectively.
  In other words, there exists $t_*,t^*>0$ independent of the
  mesh characteristics such that for any $\vvh\in\TVh$ it holds that

  \vspace{-\baselineskip}
  \begin{subequations}\label{Eq:GlobalStabilityTVh}
    \begin{align}
      &t_* \NORM{\vvh}{0,\Omega}^2\le 
      \TNORM{\vvh}{\TVh}\le 
      t^*\NORM{\vvh}{0,\Omega}^2,\\[0.5em]
      &t_* \NORM{\nabla\vvh}{0,\Omega}^2\le 
      \TNORM{\vvh}{\TVh}^{2,\nabla}\le 
      t^*\NORM{\nabla\vvh}{0,\Omega}^2,\\[0.5em]
      &t_* \NORM{\vvh}{1,\Omega}^2
      \le 
      \TNORM{\vvh}{1,\TVh}^2
      \le 
      t^*\NORM{\vvh}{1,\Omega}^2.
    \end{align}
  \end{subequations}
\end{corollary}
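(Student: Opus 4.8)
The plan is to obtain the global equivalences in \eqref{Eq:GlobalStabilityTVh} by a direct localization-and-summation argument, built entirely on the elemental stability bounds already established in Theorem~\ref{thm:ConsistencyStabilityTVhP}. No new analytic ingredient is needed; the corollary is the standard passage from element-wise to global norm equivalence in the VEM framework.

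First I would record the two structural facts that drive the reduction. On the one hand, by construction the global forms in \eqref{Eq:GlobalTVhForms} are sums over the mesh cells of the corresponding elemental forms, so that $\TNORM{\vvh}{\TVh}^2=\sum_{\P\in\Th}\scalTVhP{\vvh}{\vvh}$ and $\snorm{\vvh}{\TVh}^2=\sum_{\P\in\Th}\scalTVhPNa{\vvh}{\vvh}$. On the other hand, since any $\vvh\in\TVh$ belongs to $\Honev{\Omega}$, there are no inter-element jump contributions, and the global $\LTWO$ norm and the $H^1$-seminorm split cellwise as $\NORM{\vvh}{0,\Omega}^2=\sum_{\P\in\Th}\NORM{\vvh}{0,\P}^2$ and $\NORM{\nabla\vvh}{0,\Omega}^2=\sum_{\P\in\Th}\NORM{\nabla\vvh}{0,\P}^2$.

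Next I would invoke, cell by cell, the elemental bounds of Theorem~\ref{thm:ConsistencyStabilityTVhP}: for each $\P\in\Th$, $t_*\NORM{\vvh}{0,\P}^2\le\scalTVhP{\vvh}{\vvh}\le t^*\NORM{\vvh}{0,\P}^2$ and the analogous estimate for $\scalTVhPNa{\vvh}{\vvh}$ in terms of $\NORM{\nabla\vvh}{0,\P}^2$. Because the constants $t_*,t^*$ are independent of both $\hh$ and $\P$, summing these inequalities over all $\P\in\Th$ and using the two decompositions above yields the first two chains of \eqref{Eq:GlobalStabilityTVh} with the same constants. The third chain then follows by adding the first two, together with $\TNORM{\vvh}{1,\TVh}^2=\TNORM{\vvh}{\TVh}^2+\snorm{\vvh}{\TVh}^2$ and $\NORM{\vvh}{1,\Omega}^2=\NORM{\vvh}{0,\Omega}^2+\NORM{\nabla\vvh}{0,\Omega}^2$.

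There is no genuinely difficult step here; the only points that deserve a word of care are that the constants in Theorem~\ref{thm:ConsistencyStabilityTVhP} are uniform over the mesh (so the summed inequalities do not degrade), and that the inclusion $\TVh\subset\Honev{\Omega}$ is what makes the broken seminorm coincide with the honest global seminorm, so that no conformity defect enters the bookkeeping.
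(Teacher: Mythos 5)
Your argument is correct and is precisely the route the paper intends: the paper omits the proof as an ``immediate consequence'' of Theorem~\ref{thm:ConsistencyStabilityTVhP}, and your localization-and-summation of the elemental stability bounds, using the cellwise additivity of the global forms and of the $\LTWO$/$H^1$ norms for conforming $\vvh\in\TVh$, is exactly that omitted standard argument. Nothing is missing or different in substance.
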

\BEGINPROOF
This result is an immediate consequence of
Theorem~\ref{thm:ConsistencyStabilityTVhP} and we omit the proof.
\ENDPROOF

We can define the global interpolation projector
$\ITVh:\big[\HONE(\Omega)\big]^2\to\TVh$ that is such that
$\restrict{\ITVh(\vv)}{\P} = \ITVhP\left(\restrict{\vv}{\P}\right)$
for every $\P\in\Th$ and $\vv\in[\HONE(\Omega)]^2$.
We also define the global space $\TVhzr$ of the functions in $\TVh$
with zero trace on the boundary of $\Omega$:
\begin{align*}
  \TVhzr = \left\{ \vvh\in\TVh:\restrict{\vvh}{\partial\Omega}\equiv \zrv\right\}.
\end{align*}

We note that the spaces $\TVh$ and $\Ph$ also form a de~Rham complex of the form
\begin{equation*}
\begin{CD}
    \Honev{\Omega} @>\DIV>> \LTWO(\Omega) \\
    @VV\ITVh V @VV\IPh V \\
    \TVh @>\DIV>> \Ph 
\end{CD}
\end{equation*}
However, unlike the case presented in subsection~\ref{subsec:ElectroMagDeRham} this chain is not commutative.
The set of degrees of freedom are transformed in accordance to the
diagram:

\medskip
\begin{minipage}{0.9\textwidth}
  \begin{picture}(0,100)
    \put(100,10){\includegraphics[scale=0.4]{fig03}}
    \put(200,40){$\xrightarrow{\;\;\;\;\;\;\DIV\;\;\;\;\;\;\;}$}
    %
    \put(250,10){\includegraphics[scale=0.4]{fig02}}
  \end{picture}
\end{minipage}

\medskip
Finally, we present a result regarding the stability of the virtual
element approximations using the spaces $\TVh$ and $\Phzr$, which are
specifically chosen to satisfy an inf-sup condition.
\begin{theorem}
  \label{thm:StableStokesPair}
  There exists a projector $\Pi_h:\Honevzr{\Omega}\to\TVhzr$ such that
  \begin{align*}
    \scalPh{\DIV\Pih\vv}{\qsh}=\scalPh{\DIV\vv}{\qsh}
    \quad\textrm{and}\quad
    \TNORM{\Pih\vv}{1,\TVh}\leq\Cs_\pi\NORM{\vv}{1,\Omega},
  \end{align*}
  for every vector-valued field $\vv\in\Honevzr{\Omega}$ and scalar
  function $\qsh\in\Phzr$.
  Here, $C_\pi$ is a positive, real constant independent of $\hh$.
  The two spaces $\TVh$ and $\Phzr$ form an inf-sup stable pair and
  satisfy
  \begin{align}
    \inf_{\qsh\in\Phzr}\sup_{\vvh\in\TVhzr}
    \frac{\scalPh{\DIV\vvh}{\qsh}}{\TNORM{\vvh}{1,\TVh}\TNORM{\qsh}{\Ph}}>\beta_{\pi}>0
  \end{align}
  for some real and strictly positive constant $\beta_{\pi}$.
\end{theorem}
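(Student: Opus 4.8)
\BEGINPROOF
The statement contains two assertions: the existence of a Fortin-type operator $\Pi_h$ with the stated divergence-reproduction and continuity properties, and the inf-sup bound for the pair $(\TVhzr,\Phzr)$. The plan is to derive the inf-sup inequality as the classical consequence of Fortin's lemma once such a $\Pi_h$ is available, so the real work is the construction of $\Pi_h$; this I would carry out by adapting the analysis of the Stokes virtual element spaces from~\cite{BeiraodaVeiga-Lovadina-Vacca:2017,BeiraodaVeiga-Lovadina-Vacca:2018,Vacca:2018}. Throughout, I read $\scalPh{\DIV\vv}{\qsh}$ as $\scal{\DIV\vv}{\qsh}$ (equivalently $\scalPh{\IPh(\DIV\vv)}{\qsh}$), the two coinciding because $\qsh$ is piecewise constant and $\scalPh{\cdot}{\cdot}$ agrees with the $\LTWO(\Omega)$ inner product on $\Ph$.

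\emph{Construction of $\Pi_h$.} I would proceed in two steps. First, I build an $\HONE$-stable quasi-interpolation $\widetilde{\Pi}_h:\Honevzr{\Omega}\to\TVhzr$ of Scott--Zhang type: the degrees of freedom \DOFS{\Dv1}{}-\DOFS{\Dv2}{} of $\widetilde{\Pi}_h\vv$ are defined through suitable local $\LTWO$-averages of $\vv$ over mesh facets, with the facets taken on $\partial\Omega$ near the boundary so that $\widetilde{\Pi}_h$ maps into $\TVhzr$. Standard Scott--Zhang arguments, using the polynomial approximation estimates granted by the mesh-regularity assumptions \textbf{(M1)}-\textbf{(M2)} and the norm equivalence of Corollary~\ref{cor:EquivalentNormsTVh}, give $\TNORM{\widetilde{\Pi}_h\vv}{1,\TVh}\le\Cs\NORM{\vv}{1,\Omega}$. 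Second, I correct the divergence: the element $g_h:=\IPh(\DIV\vv)-\DIV(\widetilde{\Pi}_h\vv)$ lies in $\Phzr$, since both summands integrate to zero over $\Omega$ (the boundary normal fluxes of $\vv$ and of $\widetilde{\Pi}_h\vv$ vanish), and $\NORM{g_h}{0,\Omega}\le\Cs\NORM{\vv}{1,\Omega}$ by the bound just obtained. Assuming a right inverse $\calB_h:\Phzr\to\TVhzr$ of the discrete divergence, bounded uniformly in $\hh$ as $\DIV(\calB_h\qsh)=\qsh$ and $\TNORM{\calB_h\qsh}{1,\TVh}\le\Cs\NORM{\qsh}{0,\Omega}$, I set
\begin{equation*}
  \Pi_h\vv := \widetilde{\Pi}_h\vv + \calB_h\,g_h .
\end{equation*}
Then $\DIV(\Pi_h\vv)=\DIV(\widetilde{\Pi}_h\vv)+g_h=\IPh(\DIV\vv)$, so $\scalPh{\DIV(\Pi_h\vv)}{\qsh}=\scalPh{\DIV\vv}{\qsh}$ for all $\qsh\in\Phzr$, and the triangle inequality with the three bounds above gives $\TNORM{\Pi_h\vv}{1,\TVh}\le\Cs_\pi\NORM{\vv}{1,\Omega}$. (If one requires $\Pi_h$ to be idempotent, as the word \emph{projector} suggests, this is arranged by a further standard modification; only the commutation identity and the continuity bound enter what follows.)

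\emph{Inf-sup from Fortin's lemma.} Recall the classical inf-sup condition for the continuous Stokes pair: there is $\beta>0$, depending only on $\Omega$, such that for every $\qs\in\LTWOzr(\Omega)$ one can pick $\vv\in\Honevzr{\Omega}$, $\vv\neq\zrv$, with $\scal{\DIV\vv}{\qs}\ge\beta\NORM{\vv}{1,\Omega}\NORM{\qs}{0,\Omega}$. Fix $\qsh\in\Phzr\subset\LTWOzr(\Omega)$, take the corresponding $\vv$, and set $\vvh:=\Pi_h\vv\in\TVhzr$. By the commutation identity, the convention above, and the choice of $\vv$,
\begin{equation*}
  \scalPh{\DIV\vvh}{\qsh}=\scalPh{\DIV\vv}{\qsh}=\scal{\DIV\vv}{\qsh}\ge\beta\NORM{\vv}{1,\Omega}\NORM{\qsh}{0,\Omega} .
\end{equation*}
Dividing by $\TNORM{\vvh}{1,\TVh}\,\TNORM{\qsh}{\Ph}$ and using $\TNORM{\vvh}{1,\TVh}\le\Cs_\pi\NORM{\vv}{1,\Omega}$ and $\TNORM{\qsh}{\Ph}=\NORM{\qsh}{0,\Omega}$ (Section~\ref{subsec:CellSpace}),
\begin{equation*}
  \sup_{\vvh\in\TVhzr}\frac{\scalPh{\DIV\vvh}{\qsh}}{\TNORM{\vvh}{1,\TVh}\,\TNORM{\qsh}{\Ph}}\ \ge\ \frac{\beta}{\Cs_\pi}\ >\ 0 ,
\end{equation*}
and, since this holds for every $\qsh\in\Phzr$, the asserted inf-sup inequality follows with $\beta_\pi=\beta/\Cs_\pi$.

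\emph{Main obstacle.} The delicate ingredient is the uniform boundedness of the right inverse $\calB_h$ of the discrete divergence. Because any function of $\TVh(\P)$ vanishing on $\partial\P$ has zero mean divergence, $\calB_h$ cannot be assembled purely element by element; an $\hh$-independent bound must be obtained by coupling the elements through the shared edges---for instance by solving a discrete flux problem whose stability is shown mesh-uniform under \textbf{(M1)}-\textbf{(M2)}, or by the macroelement/bubble construction of~\cite{BeiraodaVeiga-Lovadina-Vacca:2017,BeiraodaVeiga-Lovadina-Vacca:2018}. A safer fallback I would keep in reserve is to transport the inf-sup from a known stable conforming finite element pair defined on a sub-triangulation of each polygon onto the pair $(\TVhzr,\Phzr)$.
\ENDPROOF
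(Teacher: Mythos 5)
Your overall skeleton (continuous inf-sup for the Stokes pair plus a Fortin operator, then the standard division by the continuity constant) is fine, and the final deduction of the inf-sup bound from the two stated properties of $\Pi_h$ is correct. The genuine gap is in the construction of $\Pi_h$ itself: the divergence correction rests on a right inverse $\calB_h:\Phzr\to\TVhzr$ of the discrete divergence with an $\hh$-uniform bound $\TNORM{\calB_h\qsh}{1,\TVh}\leq\Cs\,\TNORM{\qsh}{\Ph}$, which you assume. But the existence of such a uniformly bounded right inverse is \emph{equivalent} to the discrete inf-sup condition you are trying to prove: since $\DIV\vvh\in\Phzr$ for every $\vvh\in\TVhzr$, having $\calB_h$ lets you take $\vvh=\calB_h\qsh$ directly in the supremum and obtain the inf-sup with $\beta_\pi=1/\Cs$ (no Scott--Zhang operator and no continuous inf-sup needed), while conversely the inf-sup is exactly what guarantees that $\DIV:\TVhzr\to\Phzr$ admits a right inverse bounded independently of $\hh$. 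As written, the argument is therefore circular; all of the content of the theorem sits in the ``main obstacle'' you defer. The fallback you keep in reserve does not rescue it either: a stable polynomial Stokes pair on a sub-triangulation of each polygon is not contained in $\TVh(\P)$ (functions of $\TVh(\P)$ solve a Stokes-type problem with polynomial data on the whole polygon), so transporting its inf-sup again requires a bounded map into the virtual space that preserves elementwise divergence averages --- precisely the missing Fortin-type construction.

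For comparison, the paper itself offers no proof of this theorem: it is quoted from the Stokes virtual element literature it cites (\cite{BeiraodaVeiga-Lovadina-Vacca:2017,BeiraodaVeiga-Lovadina-Vacca:2018,Vacca:2018}), where the stability of this velocity--pressure pair is established constructively, by building the interpolation/Fortin operator from the degrees of freedom and local estimates under mesh regularity, never by postulating a global right inverse. To make your proof complete you would have to do the analogous work here: for instance, correct $\widetilde{\Pi}_h\vv$ edge by edge (using functions supported near each edge, controlled in $H^1$ under \textbf{(M1)}--\textbf{(M2)}) so that the fluxes $\int_{\partial\P}\Pi_h\vv\cdot\nv\,d\ell$ match those of $\vv$ on every element, which yields $\scalPh{\DIV\Pih\vv}{\qsh}=\scalPh{\DIV\vv}{\qsh}$ without ever invoking $\calB_h$; the uniform bound on that local correction is the real content, and it is absent from the proposal.
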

This theorem provides a major result since a pair of finite element
spaces that do not satisfy such an inf-sup condition will yield
unstable simulations of fluid flow phenomena.
\section{Energy Estimates}
\label{sec:EnergyEstimate}

The conforming nature of our VEM allows us to mimic many properties of
the continuous setting.
Among them, one of the more important is the preservation of certain
types of estimates in the $\LTWO(\Omega)$-norm, which are obtained by
testing the variational formulation against the exact solution and
applying Gronwall's lemma, see~\cite{Emmrich:1999}.
In this section, we present an estimate of this type for the
continuous system
\eqref{eq:flowStrongConsMass}-\eqref{eq:flowStrongAmperesLaw}
and its discrete
counterpart
\eqref{eq:flowVEMVarFormConsMom}-\eqref{eq:flowDiscCurrentDensity}.

We start with the decompositions
\begin{align}
  \uv = \huv + \uv_b
  \quad\textrm{and}\quad
  \Es = \hEs + \Es_b,
  \label{Eq:Decomposition}
\end{align}
where $\uv_b$ and $\Es_b$ are extensions of the boundary conditions inside the domain and 
$\huv\in\Honevzr{\Omega}$ and $\hEs\in\HROTvzr{\Omega}$ are functions to be found.
The extension to the boundary condition of the velocity field is such
that
\begin{align*}
  \DIV\uv_b = 0 \mbox{~~~~in~~~}\Omega
  \quad\textrm{and}\quad
  \uv_{b}(\xv) = \zrv
  \mbox{~~~~if~~~}d(\xv,\partial\Omega)\geq\epsilon
\end{align*}
for $h>\epsilon>0$ for a given threshold $\epsilon$,
$d(\xv,\partial\Omega)$ being the distance between $\xv$ and the
boundary $\partial\P$.
We can construct such an extension $\uv_{b}$ by defining the domain 
$\Omega_\epsilon = \big\{
\xv\in\Omega:d(\xv,\partial\Omega)<\epsilon \big\}$ 
and taking $\uv_{b}$ to be the solution to the problem:
\begin{subequations}
  \label{eq:bnd:pblm}
  \begin{align}
    -\Delta\huv_b + \nabla s &= \zrv\phantom{0\uv_b}\quad\mbox{in~}\Omega_\epsilon,\\
    \DIV\huv_b               &= 0\phantom{\zrv\uv_b}\quad\mbox{in~}\Omega_\epsilon,\\
    \huv_b                   &= \uv_b\phantom{0\zrv}\quad\mbox{on~}\partial\Omega,\\
    \huv_b                   &= \zrv\phantom{0\uv_b}\quad\mbox{on~}\partial(\Omega\setminus\Omega_\epsilon).
  \end{align}
\end{subequations}
Problem~\eqref{eq:bnd:pblm} is well-posed, cf.~\cite{Boffi-Brezzi-Fortin:2013}.
We further decompose the current density into its values along the boundary and the interior by:
\begin{align*}
  \hJs  = \hEs+\huv\times\Bv
  \quad\textrm{and}\quad
  \Js_b = \Es_b + \uv_b\times\Bv.
\end{align*}
The following theorem gives the continuous energy estimate. Similar
estimates are reported in
\cite{%
Liu-Wang:2001,%
Liu-Wang:2004,%
Hu-Ma-Xu:2017}.
\begin{theorem}\label{Thm:FlowContEnerEst}
  Let $(\uv,\Bv,\Es,\ps)$ solve the variational formulation
  \eqref{eq:flowContVarFormStokes}-\eqref{eq:ContVarFormAmpereOhm} in
  the time interval $[0,T]$.
  Then,
  \begin{align}
    &\frac{1}{2}\frac{d}{dt}\norm{\huv}{0,\Omega}^2
    + \frac{1}{2R_m}\frac{d}{dt}\norm{\Bv}{0,\Omega}^2
    + R_e^{-1}\norm{\nabla\huv}{0,\Omega}^2
    + \norm{\hJs}{0,\Omega}^2
    \nonumber\\[0.5em]
    &\qquad
    = \Scal{\fv}{\huv}
    - \Scal{\frac{\partial}{\partial t}\uv_b}{\huv}
    - R_e^{-1}\Scal{\nabla\uv_b}{\nabla\huv}
    - R_m^{-1}\Scal{\ROTv\Es_b}{\Bv}
    - \Scal{\Js_b}{\hJs},
    \label{Eq:ContEnerEst1}
  \end{align}
  and
  \begin{align}
    &\frac{e^{-T}}{2}\norm{\huv(T)}{0,\Omega}^2
    + \frac{e^{-T}}{2R_m}\norm{\Bv(T)}{0,\Omega}^2
    +
    \int_0^T
    \left(
    \frac{e^{-t}}{2R_e}\norm{\nabla\huv}{0,\Omega}^2
    +
    \frac{e^{-t}}{2}
    \norm{\hJs}{0,\Omega}^2
    \right)
    dt
    \nonumber\\[0.5em]
    &\qquad
    \leq
    \frac{e^{-T}}{2}\norm{\huv(0)}{0,\Omega}^2
    +
    \frac{e^{-T}}{2R_m}\norm{\Bv(0)}{0,\Omega}^2
    \nonumber\\[0.5em]
    &\qquad\qquad
    +
    \int_0^T
    \Big(
    e^{-t}R_e\NORM{\fv}{-1,\Omega}^2+
    \frac{e^{-t}}{2}\frac{d}{dt}\norm{\uv_b}{0,\Omega}^2
    +
    R_e^{-1} e^{-t}
    \norm{\nabla\uv_b}{0,\Omega}^2
    \nonumber\\[0.5em]
    &\qquad\qquad
    +
    \frac{e^{-t}}{2R_m}
    \norm{\ROTv\Es_b}{0,\Omega}^2+
    \frac{e^{-t}}{2}
    \norm{\Js_b}{0,\Omega}^2
    \Big)
    dt.
    \label{Eq:ContEnerEst2}
  \end{align}
\end{theorem}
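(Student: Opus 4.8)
The plan is to derive the differential identity \eqref{Eq:ContEnerEst1} by an energy test of the four equations of the weak formulation, and then to deduce the integral bound \eqref{Eq:ContEnerEst2} from it by a Gronwall-type argument with an exponential weight.

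For the first part I would test the momentum balance \eqref{eq:flowContVarFormStokes} with $\vv=\huv\in\Honevzr{\Omega}$, the incompressibility constraint \eqref{eq:flowContVarFormDivFreeu} with $\qs=\ps\in\LTWOzr(\Omega)$, Faraday's law \eqref{eq:flowContVarFormFaraday} with $\Cv=R_m^{-1}\Bv\in\HDIV{\Omega}$, and the Ohm--Amp\`ere equation \eqref{eq:ContVarFormAmpereOhm} with $\Ds=\hEs\in\HROTvzr{\Omega}$, and then add the resulting identities. Using the splitting \eqref{Eq:Decomposition} and the symmetry of the $\LTWO$ products, $\Scal{\partial_t\uv}{\huv}=\tfrac12\tfrac{d}{dt}\norm{\huv}{0,\Omega}^2+\Scal{\partial_t\uv_b}{\huv}$, $\Scal{\nabla\uv}{\nabla\huv}=\norm{\nabla\huv}{0,\Omega}^2+\Scal{\nabla\uv_b}{\nabla\huv}$, and $R_m^{-1}\Scal{\partial_t\Bv}{\Bv}=\tfrac{1}{2R_m}\tfrac{d}{dt}\norm{\Bv}{0,\Omega}^2$. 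The pressure coupling drops out because $\DIV\uv_b=0$ and, from \eqref{eq:flowContVarFormDivFreeu} with $\qs=\ps$, also $\Scal{\ps}{\DIV\uv}=0$, so that $\Scal{\ps}{\DIV\huv}=0$.

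The crucial step is the treatment of the nonlinear term. By the two-dimensional cross-product identities, $-\Scal{\Js\times\Bv}{\huv}=\Scal{\Js}{\huv\times\Bv}$; the algebraic relations $\huv\times\Bv=\hJs-\hEs$ and $\Js=\hJs+\Js_b$ (the latter from bilinearity of $(\cdot)\times\Bv$) then give $-\Scal{\Js\times\Bv}{\huv}=\norm{\hJs}{0,\Omega}^2+\Scal{\Js_b}{\hJs}-\Scal{\Js}{\hEs}$. The last term is exactly what the Ohm--Amp\`ere equation tested with $\hEs$ controls, namely $\Scal{\Js}{\hEs}=R_m^{-1}\Scal{\Bv}{\ROTv\hEs}=R_m^{-1}\Scal{\Bv}{\ROTv\Es}-R_m^{-1}\Scal{\Bv}{\ROTv\Es_b}$, while Faraday's law tested with $R_m^{-1}\Bv$ converts $R_m^{-1}\Scal{\Bv}{\ROTv\Es}$ into $-\tfrac{1}{2R_m}\tfrac{d}{dt}\norm{\Bv}{0,\Omega}^2$. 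Collecting all contributions produces exactly \eqref{Eq:ContEnerEst1}. I expect the only genuinely delicate point to be the sign bookkeeping in the cross products together with the fact that Faraday's law must be weighted by \emph{precisely} $R_m^{-1}$ for the magnetic-energy derivative to appear with the right coefficient and for the coupling contributions $\Scal{\Js}{\hEs}$ and $R_m^{-1}\Scal{\Bv}{\ROTv\Es}$ to telescope.

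For \eqref{Eq:ContEnerEst2} I would multiply \eqref{Eq:ContEnerEst1} by $e^{-t}$ and use $e^{-t}\tfrac{d}{dt}\psi(t)=\tfrac{d}{dt}\big(e^{-t}\psi(t)\big)+e^{-t}\psi(t)$ on each energy derivative, so that the left-hand side becomes the time derivative of the exponentially weighted energy plus a nonnegative copy of it; integrating over $[0,T]$ yields the left-hand side of \eqref{Eq:ContEnerEst2} together with the initial energy. On the right-hand side, each boundary-data term is bounded by the Cauchy--Schwarz and Young inequalities — with $\Scal{\fv}{\huv}\le\NORM{\fv}{-1,\Omega}\norm{\nabla\huv}{0,\Omega}$ via the duality/Poincar\'e estimate for $\fv$ — the Young weights being chosen so that the generated multiples of $\norm{\huv}{0,\Omega}^2$, $\norm{\nabla\huv}{0,\Omega}^2$, $\norm{\Bv}{0,\Omega}^2$ and $\norm{\hJs}{0,\Omega}^2$ are at most half of the corresponding quantities already present on the left and can therefore be absorbed, which leaves the half-coefficients displayed in \eqref{Eq:ContEnerEst2}. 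This final step is the standard Gronwall mechanism (cf.~\cite{Emmrich:1999}) and is routine.
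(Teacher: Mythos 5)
Your proposal is correct and follows essentially the same route the paper indicates for this estimate: test \eqref{eq:flowContVarFormStokes} with $\huv$, \eqref{eq:flowContVarFormFaraday} with $R_m^{-1}\Bv$, \eqref{eq:ContVarFormAmpereOhm} with $\hEs$, use $-\scal{\Js\times\Bv}{\huv}=\scal{\Js}{\huv\times\Bv}$ together with the splittings $\Js=\hJs+\Js_b$, $\huv\times\Bv=\hJs-\hEs$ so the coupling terms telescope into \eqref{Eq:ContEnerEst1}, and then weight by $e^{-t}$, integrate, and absorb with Young's inequality. The only discrepancies are in the paper's printed statement rather than in your argument: integrating on $[0,T]$ gives the initial energies with weight $e^{-0}=1$ (not the printed $e^{-T}$), and Young's inequality yields $\tfrac{e^{-t}}{2}\norm{\partial_t\uv_b}{0,\Omega}^2$ rather than $\tfrac{e^{-t}}{2}\tfrac{d}{dt}\norm{\uv_b}{0,\Omega}^2$, so your derivation reproduces \eqref{Eq:ContEnerEst2} up to these evident typos.
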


\noindent
For the discrete version of the estimates presented in
Theorem~\ref{Thm:FlowContEnerEst}, for any $n\in [0,\, N-1]$, 
we decompose
\begin{align}
  &\quad\Esh^{n+\theta} = \hEsh^{n+\theta}+\IVh(\Es_b^{n+\theta}),\\
  &\quad\uvh^{n+1} = \huvh^{n+1}+\ITVh(\uv_b^{n+1}),
\end{align}
where $(\hEsh^{n+\theta},\huvh^{n+\theta})\in\Vhzr\times\TVhzr$ and
$\Es_b,\uv_b$ are such that their evaluations in
$\Omega\setminus\Omega_\epsilon$ are identically zero.
The condition on the boundary data is required to guarantee that the
degrees of freedom of these boundary fields all lie along the
boundary.
Next, for any $n$,
$0\le n\le N-1$, 
we define
\begin{align*}
  \hJsh^{n+\theta} &= \hEsh^{n+\theta} + \IVh(\huvh^{n+\theta}\times\PiEhRT\Bvh^{n+\theta}),\\ 
  \Jhb^{n+\theta}  &= \IVh(\Es_b^{n+\theta}) + \IVh(\uv_b^{n+\theta}\times\PiEhRT\Bvh^{n+\theta}).
\end{align*}
The next result is a discrete counterpart of
Theorem~\ref{Thm:FlowContEnerEst}.
\begin{theorem}
  \label{Thm:DiscNonlinearEnEst}
  Let $\big\{(\uvh^n,\Bvh^n)\big\}_{n=0}^N\subset\TVh\times\Eh$ and
  $\big\{(\Esh^{n+\theta},\psh^{n+\theta}))\big\}_{n=0}^{N-1}\subset
  \Vh\times\Phzr$ solve the virtual element formulation
  \eqref{eq:flowVEMVarFormConsMom}-\eqref{eq:flowDiscCurrentDensity}.
  Then, it holds that 
  \begin{equation}
    \TERM{L1} +
    \TERM{L2} =
    \TERM{R},
    \label{Eq:DiscNonlinearEnEst1}
  \end{equation}
  where
  \begin{subequations}
    \begin{align}
      \TERM{L1}
      &=
      \Dt\big(\theta-1/2\big)
      \left(
      \frac{\TNORM{\huvh^{n+1}-\huvh^{n}}{\TVh}^2}{\Dt^2} + 
      \frac{\TNORM{\Bvh^{n+1}-\Bvh^{n}}{\Eh}^2}{\Dt^2 R_m}
      \right)
      \nonumber\\[0.5em]
      &\qquad\qquad
      +\left(
      \frac{\TNORM{\huvh^{n+1}}{\TVh}^2 - \TNORM{\huvh^{n}}{\TVh}^2}{2\Dt} +
      \frac{\TNORM{\Bvh^{n+1}}{\Eh}^2   - \TNORM{\Bvh^{n}}{\Eh}^2}{2\Dt R_m}
      \right),
      \\[0.5em]
      \TERM{L2}
      &=
      R_e^{-1}\snorm{\huvh^{n+\theta}}{\TVh}^2 +
      \TNORM{\hJsh^{n+\theta}}{\Vh}^2 +
      \ScalPh{\DIV\ITVh\uv_b^{n+\theta}}{\psh^{n+\theta}},
      \\[0.5em]
      \TERM{R}
      &=
      \ScalTVh{\fvh}{\huvh^{n+\theta}} - \ScalTVh{ \frac{\ITVh\uv_b^{n+1}-\ITVh\uv_b^n}{\Dt} }{\huvh^{n+\theta}}
      \nonumber\\[0.5em]
      &\qquad\qquad
      - R_e^{-1} \ScalTVhNa{\ITVh\uv_b^{n+\theta}}{\huvh^{n+\theta}}
      - \ScalVh{\Jhb^{n+\theta}}{\hJsh^{n+\theta}}
      \nonumber\\[0.5em]
      &\qquad\qquad
      - R_m^{-1}\ScalEh{\ROTv\IVh\Es_b^{n+\theta}}{\Bvh^{n+\theta}}.
    \end{align}
  \end{subequations}
  If $\theta\in[1/2,1]$, for any $\epsilon>0$ we have that
  \begin{align}
    &
    \alpha^{N}\left(\TNORM{\huvh^N}{\TVh}^2
    + R_m^{-1}\TNORM{\Bvh^N}{\Eh}^2\right)
    \nonumber\\
    &\qquad
    + \sum_{n=0}^{N} \gamma\alpha^n\left( R_e^{-1}\snorm{\huvh^{n+\theta}}{\TVh}^2
    + \TNORM{\hJsh^{n+\theta}}{\Vh}^2
    - 2\epsilon\TNORM{\psh^{n+\theta}}{\Ph}^2\right)\Dt
    \nonumber\\[0.5em]
    &\quad
    \leq
    \left(\TNORM{\ITVh(\uv_0)}{\TVh}^2
    + R_m^{-1}\TNORM{\IEh(\Bv_0)}{\Eh}^2\right)
    \nonumber\\[0.5em]
    &\qquad
    + \sum_{n=0}^N\gamma\alpha^{n}\big(R_e\TNORM{\fvh}{-1,\TVh}^2
    + \Dt^{-1}\TNORM{\ITVh(\uv_b^{n+1}-\uv_b^n)}{\TVh}^2
    \nonumber\\[0.5em]
    &\qquad
    + R_e^{-1}\snorm{\IVh\uv_b^{n+\theta}}{\TVh}^2
    + \frac{\eta^*}{2\epsilon}\left(
    \int_{\partial\Omega}\left\vert\ITVh\uv_b^{n+\theta}\cdot\nv
    \right\vert ds\right)^2
    \nonumber\\[0.5em]
    &\qquad
    + R_m^{-1}\TNORM{\ROTv\IVh\Es_b^{n+\theta}}{\Eh}^2
    + \TNORM{\Jhb^{n+\theta}}{\Vh}^2
    \big)\Dt,
    \label{Eq:DiscNonlinearEnEst2}
  \end{align}
  where $\eta^*>0$ is given in Theorem~\ref{cor:EquivalentNormsTVh}
  and $\alpha={\theta}\slash{(1+\theta)}$ and
  $\gamma={1}\slash{(1+\theta)}$.
  Moreover, if $\uv_b\cdot\nv\equiv 0$ along $\partial\Omega$
  (non-penetrating wall condition) we obtain the final energy
  stability estimate
  \begin{align}
    \label{Eq:DiscNonlinearEnEst3}
    &\alpha^{N}\left(\TNORM{\huvh^N}{\TVh}^2
    + R_m^{-1}\TNORM{\Bvh^N}{\Eh}^2
    \right)
    +\sum_{n=0}^{N} \gamma\alpha^n\left(
    R_e^{-1}\snorm{\huvh^{n+\theta}}{\TVh}^2
    +\TNORM{\hJsh^{n+\theta}}{\Vh}^2
    \right)
    \nonumber\\[0.5em]
    &\quad
    \leq
    \left(
    \TNORM{\ITVh(\uv_0)}{\TVh}^2
    +R_m^{-1}\TNORM{\IEh(\Bv_0)}{\Eh}^2\right)
    +\sum_{n=0}^N\gamma\alpha^{n}\big( R_e\TNORM{\fvh}{-1,\TVh}^2
    \nonumber\\[0.5em]
    &\qquad
    +\Dt^{-1}\TNORM{\ITVh(\uv_b^{n+1}-\uv_b^n)}{\TVh}^2
    +R_e^{-1}\snorm{\IVh\uv_b^{n+\theta}}{\TVh}^2
    \nonumber\\[0.5em]
    &\qquad        
    +R_m^{-1}\TNORM{\ROTv\IVh\Es_b^{n+\theta}}{\Eh}^2
    +\TNORM{\Jhb^{n+\theta}}{\Vh}^2
    \big)\Dt.
  \end{align}
\end{theorem}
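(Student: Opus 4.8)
\emph{Overview.} I would prove the stated identity \eqref{Eq:DiscNonlinearEnEst1} first and then obtain \eqref{Eq:DiscNonlinearEnEst2}--\eqref{Eq:DiscNonlinearEnEst3} from it by a weighted discrete Gronwall argument. For the identity, the plan is to test each of the four discrete equations \eqref{eq:flowVEMVarFormConsMom}--\eqref{eq:flowVEMVarFormOhmAmpere} against its ``natural'' field: in \eqref{eq:flowVEMVarFormConsMom} take $\vvh=\huvh^{n+\theta}\in\TVhzr$; in \eqref{eq:flowVEMVarFormConsMass} take $\qsh=\psh^{n+\theta}\in\Phzr$; in \eqref{eq:flowVEMVarFormFaraday} take $\Cvh=R_m^{-1}\Bvh^{n+\theta}\in\Eh$; and in \eqref{eq:flowVEMVarFormOhmAmpere} take $\Dsh=\hEsh^{n+\theta}\in\Vhzr$. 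The elementary polarization identity
\begin{equation*}
  \BRACKET{a^{n+1}-a^n}{a^{n+\theta}}=\tfrac12\big(\|a^{n+1}\|^2-\|a^{n}\|^2\big)+\big(\theta-\tfrac12\big)\|a^{n+1}-a^{n}\|^2,
\end{equation*}
valid in any inner-product space when $a^{n+\theta}=(1-\theta)a^n+\theta a^{n+1}$, converts the two discrete time-derivative terms (for $\uvh$ in $\TNORM{\cdot}{\TVh}$ and for $\Bvh$ in $\TNORM{\cdot}{\Eh}$, after inserting the decompositions $\uvh^{n+1}=\huvh^{n+1}+\ITVh\uv_b^{n+1}$ and $\Esh^{n+\theta}=\hEsh^{n+\theta}+\IVh\Es_b^{n+\theta}$) into precisely $\TERM{L1}$ plus boundary-data terms that land in $\TERM{R}$. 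The viscous term splits as $R_e^{-1}\ScalTVhNa{\uvh^{n+\theta}}{\huvh^{n+\theta}}=R_e^{-1}\snorm{\huvh^{n+\theta}}{\TVh}^2+R_e^{-1}\ScalTVhNa{\ITVh\uv_b^{n+\theta}}{\huvh^{n+\theta}}$, and the pressure term is rewritten with \eqref{eq:flowVEMVarFormConsMass} tested by $\psh^{n+\theta}$, which gives $\ScalPh{\DIV\huvh^{n+\theta}}{\psh^{n+\theta}}=-\ScalPh{\DIV\ITVh\uv_b^{n+\theta}}{\psh^{n+\theta}}$, producing the $\ScalPh{\DIV\ITVh\uv_b^{n+\theta}}{\psh^{n+\theta}}$ summand of $\TERM{L2}$.

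\emph{The crucial cancellation.} The heart of Part 1 is to show that the Lorentz coupling term $\ScalVh{\Jsh^{n+\theta}}{\IVh(\huvh^{n+\theta}\times\PiEhRT\Bvh^{n+\theta})}$ appearing in the tested \eqref{eq:flowVEMVarFormConsMom} is annihilated by the electromagnetic block. Combining the tested Faraday law with the tested Ohm--Ampère law (using symmetry of $\scalEh{\cdot}{\cdot}$ and $\ROTv\Esh^{n+\theta}=\ROTv\hEsh^{n+\theta}+\ROTv\IVh\Es_b^{n+\theta}$) yields $[\text{time-derivative of }\Bvh]+\scalVh{\Jsh^{n+\theta}}{\hEsh^{n+\theta}}+R_m^{-1}\ScalEh{\ROTv\IVh\Es_b^{n+\theta}}{\Bvh^{n+\theta}}=0$. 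Now substitute $\hEsh^{n+\theta}=\hJsh^{n+\theta}-\IVh(\huvh^{n+\theta}\times\PiEhRT\Bvh^{n+\theta})$ and $\Jsh^{n+\theta}=\hJsh^{n+\theta}+\Jhb^{n+\theta}$; the latter follows from bilinearity of the cross product, linearity of $\IVh$, and the fact that $\IVh$ only reads vertex values while $\ITVh\uv_b$ and $\uv_b$ share those. Then $\scalVh{\Jsh^{n+\theta}}{\hEsh^{n+\theta}}$ becomes $\TNORM{\hJsh^{n+\theta}}{\Vh}^2+\ScalVh{\Jhb^{n+\theta}}{\hJsh^{n+\theta}}-\ScalVh{\Jsh^{n+\theta}}{\IVh(\huvh^{n+\theta}\times\PiEhRT\Bvh^{n+\theta})}$, whose last term is exactly minus the Lorentz term of the momentum line. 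Adding the momentum line to this electromagnetic line makes the two Lorentz terms cancel and leaves precisely \eqref{Eq:DiscNonlinearEnEst1}.

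\emph{From the identity to the stability bound.} Assume $\theta\in[1/2,1]$, so the first group in $\TERM{L1}$ is non-negative and can be dropped. Multiply \eqref{Eq:DiscNonlinearEnEst1} by $2\Dt$ and estimate $\TERM{R}$ and the boundary-pressure summand of $\TERM{L2}$ by Cauchy--Schwarz and Young: $\ScalTVh{\fvh}{\huvh^{n+\theta}}$ is controlled through the dual norm with one piece absorbed into $R_e^{-1}\snorm{\huvh^{n+\theta}}{\TVh}^2$; $\ScalVh{\Jhb^{n+\theta}}{\hJsh^{n+\theta}}$ is split with half absorbed into $\TNORM{\hJsh^{n+\theta}}{\Vh}^2$; $R_e^{-1}\ScalTVhNa{\ITVh\uv_b^{n+\theta}}{\huvh^{n+\theta}}$ similarly; while $\ScalTVh{(\ITVh\uv_b^{n+1}-\ITVh\uv_b^{n})/\Dt}{\huvh^{n+\theta}}$ and $R_m^{-1}\ScalEh{\ROTv\IVh\Es_b^{n+\theta}}{\Bvh^{n+\theta}}$ leave residuals $\Dt\,\TNORM{\huvh^{n+\theta}}{\TVh}^2$ and $\Dt\,R_m^{-1}\TNORM{\Bvh^{n+\theta}}{\Eh}^2$, which are reduced to data at levels $n$ and $n+1$ via the convexity bound $\TNORM{a^{n+\theta}}{}^2\le(1-\theta)\TNORM{a^{n}}{}^2+\theta\TNORM{a^{n+1}}{}^2$. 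For the pressure summand one uses $|\ScalPh{\DIV\ITVh\uv_b^{n+\theta}}{\psh^{n+\theta}}|\le\epsilon\TNORM{\psh^{n+\theta}}{\Ph}^2+\tfrac1{4\epsilon}\TNORM{\DIV\ITVh\uv_b^{n+\theta}}{\Ph}^2$; since $\uv_b$ is supported in a strip of width $<h$ along $\partial\Omega$, the cell formula $\restrict{\DIV\ITVh\uv_b^{n+\theta}}{\P}=\mP^{-1}\int_{\partial\P}\ITVh\uv_b^{n+\theta}\cdot\nv\,d\ell$, telescoping of interior normal fluxes, and the norm equivalence of Corollary~\ref{cor:EquivalentNormsTVh} (constant $\eta^*$) give the $\tfrac{\eta^*}{2\epsilon}\big(\int_{\partial\Omega}|\ITVh\uv_b^{n+\theta}\cdot\nv|\,ds\big)^2$ term. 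This produces a one-step recursion for $E^n:=\TNORM{\huvh^{n}}{\TVh}^2+R_m^{-1}\TNORM{\Bvh^{n}}{\Eh}^2$ of the form $c_1 E^{n+1}\le c_2 E^{n}+2\Dt(\text{data})-2\Dt(\text{dissipation}-2\epsilon\TNORM{\psh^{n+\theta}}{\Ph}^2)$; iterating from $n=0$ and multiplying by the geometric weight $\alpha^{N}$ gives \eqref{Eq:DiscNonlinearEnEst2} with $\alpha=\theta/(1+\theta)$, $\gamma=1/(1+\theta)$. Finally, for \eqref{Eq:DiscNonlinearEnEst3}: if $\uv_b\cdot\nv\equiv0$ on $\partial\Omega$, then on each boundary edge $\ITVh\uv_b^{n+\theta}\cdot\nv$ is a quadratic in the edge parameter vanishing at both endpoints and the midpoint, hence $\equiv0$, so $\int_{\partial\Omega}|\ITVh\uv_b^{n+\theta}\cdot\nv|\,ds=0$, the $\epsilon$-splitting is unneeded, and \eqref{Eq:DiscNonlinearEnEst3} drops out.

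\emph{Main obstacle.} The conceptual crux is the Lorentz-term cancellation: it works only because the discretization $\TERM{(*)}$ in \eqref{eq:flowVEMVarFormConsMom} is chosen so that testing Ohm--Ampère with $\hEsh^{n+\theta}$ reproduces $\IVh(\huvh^{n+\theta}\times\PiEhRT\Bvh^{n+\theta})$ against $\Jsh^{n+\theta}$ with the matching sign; verifying the bookkeeping identity $\Jsh^{n+\theta}=\hJsh^{n+\theta}+\Jhb^{n+\theta}$ through the interpolation operators is the first care point. The genuinely technical obstacle, however, is the non-homogeneous boundary data: controlling $\TNORM{\DIV\ITVh\uv_b^{n+\theta}}{\Ph}$ by a boundary integral needs the thin-strip support of $\uv_b$ together with a flux-telescoping argument, and threading the $\epsilon$-dependent Young constants so as to recover exactly the weights $\alpha,\gamma$ in the final discrete Gronwall iteration is where the bulk of the careful computation sits.
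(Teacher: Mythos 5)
The paper itself does not print a proof of this theorem (it is quoted from the companion work \cite{paper1}), so I can only judge your argument on its own terms. Your Part 1 is correct and is surely the intended argument: testing with $\huvh^{n+\theta}$, $\psh^{n+\theta}$, $R_m^{-1}\Bvh^{n+\theta}$, $\hEsh^{n+\theta}$, the polarization identity for the $\theta$-average, the splitting of the viscous and pressure terms, and in particular the bookkeeping $\Jsh^{n+\theta}=\hJsh^{n+\theta}+\Jhb^{n+\theta}$ (justified, as you say, because $\IVh$ only reads vertex values, where $\ITVh\uv_b$ and $\uv_b$ coincide) and the resulting cancellation of the Lorentz term against $\scalVh{\Jsh^{n+\theta}}{\hEsh^{n+\theta}}$ reproduce \eqref{Eq:DiscNonlinearEnEst1} exactly.

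There is, however, a genuine gap in your treatment of the term $\ScalPh{\DIV\ITVh\uv_b^{n+\theta}}{\psh^{n+\theta}}$, which is precisely where \eqref{Eq:DiscNonlinearEnEst2} and \eqref{Eq:DiscNonlinearEnEst3} differ. Writing $\ScalPh{\DIV\ITVh\uv_b}{\psh}=\sum_{\P}\restrict{\psh}{\P}\int_{\partial\P}\ITVh\uv_b\cdot\nv\,d\ell$, the interior-edge contributions do \emph{not} telescope: each interior edge is weighted by the jump of $\psh$ across it, so your claim that ``telescoping of interior normal fluxes'' reduces everything to $\int_{\partial\Omega}\vert\ITVh\uv_b\cdot\nv\vert\,ds$ fails as stated, and some other mechanism (the thin-strip support of $\uv_b$ plus a per-cell estimate, or a bound of the pairing that is \emph{linear} in the boundary flux) is needed to produce the data term $\frac{\eta^*}{2\epsilon}\big(\int_{\partial\Omega}\vert\ITVh\uv_b^{n+\theta}\cdot\nv\vert\,ds\big)^2$. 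The same issue undermines your passage to \eqref{Eq:DiscNonlinearEnEst3}: with the Young split $\vert\ScalPh{\DIV\ITVh\uv_b}{\psh}\vert\le\epsilon\TNORM{\psh}{\Ph}^2+\frac{1}{4\epsilon}\TNORM{\DIV\ITVh\uv_b}{\Ph}^2$ the term $-2\epsilon\TNORM{\psh^{n+\theta}}{\Ph}^2$ does not disappear merely because $\ITVh\uv_b\cdot\nv\equiv0$ on $\partial\Omega$ (that part of your argument, the vanishing of a quadratic at three points per boundary edge, is fine); you would need the whole pairing to vanish or to be bounded by the boundary flux alone, and note that $\DIV\ITVh\uv_b\neq\IPh(\DIV\uv_b)=0$ in general since, as the paper stresses, the $\TVh$--$\Ph$ diagram is \emph{not} commutative. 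Finally, the specific $\Dt$-independent weights $\alpha=\theta/(1+\theta)$, $\gamma=1/(1+\theta)$ are asserted rather than derived: a generic recursion $c_1E^{n+1}\le c_2E^n+\dots$ combined with the convexity bound (which puts weight $\theta$ on the level $n{+}1$, not $1-\theta$) does not automatically yield them, so the precise Young/absorption choices that produce these constants still have to be exhibited.
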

\label{sec:Linearization}

In this section, we are mainly concerned with the development of a
solver for the discrete problem
\eqref{eq:flowVEMVarFormConsMom}-\eqref{eq:flowDiscCurrentDensity}
at a given time instant. 
For this reason, we keep $\theta$ and $n$ fixed, and we omit them
from our notation when not strictly necessary. 
This section is based on the reference \cite{Chacon:2008}.

In practice, we manipulate arrays of degrees of freedom of virtual
element scalar and vector functions, which we represent as row vectors
and denote with the superscript $I$, e.g., $\uvhI$ is the row vector
of degrees of freedom of $\uvh$.
We introduce the finite dimensional linear space of
\emph{column} vectors
\begin{align*}
  \Xhzr = 
  \Big\{ 
  (\vvhI,\CvhI,\DshI,\qshI)^T:\,
  (\vvh,\Cvh,\Dsh,\qsh)\in
  \TVhzr\times\Eh\times
  \Vhzr\times\Phzr
  \Big\},
\end{align*}
equipped with the Euclidean ($\ell^2$) inner product.

\medskip
We pose the discrete formulation
\eqref{eq:flowVEMVarFormConsMom}-\eqref{eq:flowDiscCurrentDensity} in
the space $\Xh$.
In order to exploit symmetry in the Jacobian matrix we replace the discrete form of Faraday's Law
  \eqref{eq:flowVEMVarFormFaraday}  given by
  \begin{align}
\ScalEh{\frac{\Bvh-\Bvh^n}{\Dt}}{\Cvh}+\scalEh{\ROTv\Esh}{\Cvh}=0,
\end{align}
with
the equivalent expression
\begin{align}
  \label{eq:NewVEMVarFormFaraday}
  \theta
  R_m^{-1}\ScalEh{\frac{\Bvh-\Bvh^n}{\Dt}}{\Cvh}+\theta R_m^{-1}\scalEh{\ROTv\Esh}{\Cvh}=0,
\end{align}
and add it to \eqref{eq:flowVEMVarFormConsMom},
\eqref{eq:flowVEMVarFormConsMass} and
\eqref{eq:flowVEMVarFormOhmAmpere}.
Then, we define a function $\Gs(\cdot)$ in such a way that
$\Gs(\xvh)\cdot\yvh$ is the left hand side of the resulting
expression, where we assume that $\xvh$ and $\yvh$ are the column vector given by
\begin{align*}
  \xvh = \big(\huvh^{n+1,I},\Bvh^{n+1,I},\hEsh^{n+\theta,I},\psh^{n+\theta,I}\big)^T
  \quad\textrm{and}\quad
  \yvh = \big(\vvhI,\CvhI,\DshI,\qshI\big)^T.
\end{align*}
With these positions, the variational formulation
\eqref{eq:flowVEMVarFormConsMom}-\eqref{eq:flowDiscCurrentDensity} is
equivalent to the problem:\\ 
\emph{Find $\xvh\in\Xh$ such that}
\begin{equation}
  \Gs(\xvh)=\zrv.
  \label{eq:ZeroesProblem}
\end{equation}
Indeed, on testing~\eqref{eq:ZeroesProblem} against $\yvh =
(\vvh,\zrv,0,0)$ we retrieve~\eqref{eq:flowVEMVarFormConsMom}, and the
other three equations can be attained similarly.
This is the set up to apply a Jacobian-free Newton–Krylov method.
This method is highly parallelizable and has optimal speed of
convergence.

The Newton method at every iteration will produce an updated
estimate for the zeroes of $\Gs$ according to
\begin{align}\label{eq:InexactNewton}
  \xvh^0    &= \big(\huvh^{n,I},\Bvh^{n,I},\hEsh^{n-1+\theta,I},\psh^{n-1+\theta,I}\big)^T,\\
  \xvh^{(m+1)} &= \xvh^{(m)}+\delta\xvh^{(m)},
  \qquad \text{where} \quad
  \partial\Gs(\xvh^{(m)})\delta\xvh^{(m)} = -\Gs(\xvh^{m}),
\end{align}
where $\partial\Gs:\Xhzr\to\calL(\Xhzr)$ is the Jacobian of $\Gs$, the
space $\calL(\Xhzr)$ being the collection of bounded linear operators
from $\Xhzr$ to its dual space $\Xhzr'$.
The reason we substitute \eqref{eq:flowVEMVarFormFaraday} with
\eqref{eq:NewVEMVarFormFaraday} is to attain some symmetry in the
Jacobian matrix (which is useful in the well-posedness analysis).
The practical implementation of this method requires to compute and
store the Jacobian matrix, which may take a lot of computational power
and memory.
Instead we propose a Jacobian-Free Krylov method.

At each time step we perform a series on Newton iterations where on each iteration we solve a linear system of the form
\begin{equation}
    \partial\Gs(\xv)\delta\xv = -\Gs(\xv)
\end{equation}
We approximate $\delta\xv$ using a GMRES iteration.
One of the major benefits using GMRES is that we need not know the entries in the Jacobian matrix $\partial\Gs$.
We need only be able to compute the matrx-vector product $\partial\Gs(\xv)\delta\yv$ for any $\delta\yv\in\Xhzr$
We can approximate the action of the Jacobian matrix using the operator $\DG(\xv):\Xhzr\to\Xhzr$ defined using the finite difference approximation:
\begin{align}
  DG(\xvh)\delta\xvh
  = \frac{\Gs(\xvh+\epsilon\delta\xvh)-\Gs(\xvh)}{\epsilon},
  \label{Eq:FFDNewton}
\end{align}
with $\epsilon=10^{-7}$ (see \cite[Page~80]{Kelley:1995}).
We emphasize that $DG(\xvh)$ itself is not computed, only its action $DG(\xvh)\delta\xvh$ is.
Thus, the algorithm updates $\xvh^{(m+1)}$ from $\xvh^{(m)}$, with
$0\leq\ms\leq\Ms-1$, as follows:
\begin{subequations}
   \label{Eq:NewtonStep}
  \begin{align}
    \xvh^{(m+1)}              = \xvh^{(m)}+\delta\xvh^{(m)}, 
    &\qquad \text{where}\quad 
    DG(\xvh^{(m)})\delta \xv^{(m)} = -G(\xvh^{(m)}),\\[0.5em]
    \text{with the initial guess} \quad
    \xvh^{(0)} &=
    \begin{cases}
      \big( \huvh^{n,I}, \Bvh^{n,I}, \hEsh^{n-1+\theta,I}, \psh^{n-1+\theta,I}\big)^T & n>0,\\[0.5em]
      \big( \huvh^{0,I}, \Bvh^{0,I},                 0,                0\big)^T & n=0.
    \end{cases}
  \end{align}
\end{subequations}
We define intermediate approximations at iteration $m$ and the final values 
through the degrees of freedom of $\xvh^{(m)}$ and $\xvh^{(m)}$, respectively:
\begin{align*}
  \left(\huvh^{n+1,(m),I},\Bvh^{n+1,(m),I},\hEsh^{n+\theta,(m),I},\psh^{n+\theta,(m),I}\right)^T &= \xvh^{(m)}, \\    
  \left(\huvh^{n+1},\Bvh^{n+1}, \hEsh^{n+\theta},\psh^{n+\theta}\right) &= \xvh^{(M)}.   
\end{align*}
This Krylov method requires a user-defined input tolerance $\eta_m$
that we fix as follows:
\begin{subequations}
  \begin{align}
    &\NORM{DG(\xvh^{(m)})\delta \xv^{(m)} +G(\xvh^{(m)})}{2}\le
    \eta_m\NORM{\Gs(\xvh^{(m)})}{2},\\[0.5em]
    &\eta_m = \min\left\{
    \eta_{\mbox{max}},
    \max\left(
    \eta_m^B,
    \gamma\frac{\epsilon_t}{\NORM{G(\xvh^{(m)})}{2}}
    \right)
    \right\},\\[0.5em]
    &
    \eta_m^B = 
    \min\left\{
    \eta_{\mbox{max}},
    \max\left( 
    \eta_{m}^A,\gamma\eta_{m-1}^\alpha
    \right)
    \right\},
    \quad
    \eta_m^A =\gamma
    \left(\frac
    {\NORM{G(\xvh^{(m)})}{2}}{\NORM{G(\xvh^{(m-1)})}{2}}
    \right)^\alpha.
  \end{align}
\end{subequations}
with $\alpha = 1.5,\gamma = 0.9,\eta_{\mbox{max}} = 0.8$.
The value of $\epsilon_t$ is chosen to guarantee that the non-linear
convergence has been achieved.
\begin{align}
  \NORM{\Gs(\xvh^{(m)})}{2}&<\epsilon_{a}+\epsilon_{r}\NORM{\Gs (\xvh^{(0)})}{2}=\epsilon_t,\\[0.5em]
  \epsilon_a &=\sqrt{\# \text{dof}}\times 10^{-15,},\quad
  \epsilon_r =10^{-4}.
\end{align}
Where $\#\text{dof}$ is the sum of the number of degrees of freedom in each of our modeling spaces.
The value of these parameters is chosen in accordance
with~\cite{BeiraodaVeiga-Brezzi-Marini-Russo:2016}.
However, this strategy is much more general
\cite{Eisenstat-Walker:1996}.
The guiding philosophy being a desire to guarantee super-linear
convergence while simultaneously not over-solving with unnecessary
GMRES iterations.

The non-linear nature of the inexact Newton steps may shed doubt as to
whether or not this solver preserves the divergence free nature of the
magnetic field.
The following result is a consequence of the Faraday law.
Note that the finite difference approximation to its Jacobian is exact
since the Faraday law is linear.
\begin{theorem}
  \label{Thm:NewtonDivFormula}
  Suppose $\delta\xvh$ is a solution of the linear problem 
  \begin{align}
    \label{Eq:DivNewton1}
    DG(\xvh)\delta\xvh = -G(\xvh).
  \end{align}
  Then we have the following relation for the $\delta\Bvh$ component of $\delta\xvh$
  \begin{align}\label{Eq:DIVNewton2}
    \DIV\delta\Bvh = \DIV(\Bvh^n-\Bvh).
  \end{align}
\end{theorem}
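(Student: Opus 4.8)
The plan is to isolate the block of $\Gs$ that comes from Faraday's law and to exploit its affine structure. Since $\Gs(\xvh)\cdot\yvh$ is obtained by adding \eqref{eq:NewVEMVarFormFaraday} to the momentum, incompressibility and Ohm--Amp{\`e}re equations, pairing with a test vector whose only nonzero block is $\Cvh\in\Eh$, i.e. $\yvh=(\zrv,\Cvh,0,0)^T$, annihilates the other three contributions: they are tested against $\vvh$, $\qsh$ and $\Dsh$, which are now zero, so in particular the nonlinear term \textbf{(*)} and the $\uv\times\Bv$ terms of $\Jsh$ disappear. What survives is
\[
  \Gs(\xvh)\cdot(\zrv,\Cvh,0,0)^T
  = \theta R_m^{-1}\ScalEh{\frac{\Bvh-\Bvh^n}{\Dt}}{\Cvh}
  + \theta R_m^{-1}\scalEh{\ROTv\Esh}{\Cvh},
\]
where $\Bvh$ and $\Esh=\hEsh+\IVh(\Es_b^{n+\theta})$ are the magnetic and total electric fields encoded in $\xvh$. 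This expression is \emph{affine} in $\xvh$ (linear in $(\Bvh,\Esh)$ plus the constant shift from $\Bvh^n$), hence the finite-difference directional derivative \eqref{Eq:FFDNewton} reproduces it exactly, with no $O(\epsilon)$ remainder: for every $\Cvh\in\Eh$,
\[
  \big[DG(\xvh)\delta\xvh\big]\cdot(\zrv,\Cvh,0,0)^T
  = \theta R_m^{-1}\ScalEh{\frac{\delta\Bvh}{\Dt}}{\Cvh}
  + \theta R_m^{-1}\scalEh{\ROTv\delta\Esh}{\Cvh},
\]
the fixed boundary extension $\IVh(\Es_b^{n+\theta})$ cancelling in the difference quotient, so that $\delta\Esh$ is precisely the (interior) electric component of $\delta\xvh$.

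Next I would substitute these two identities into the Faraday block of \eqref{Eq:DivNewton1}. Equating them up to the sign in $-\Gs(\xvh)$, cancelling the nonzero factor $\theta R_m^{-1}$ and multiplying through by $\Dt$ gives
\[
  \scalEh{\delta\Bvh+\Bvh-\Bvh^n}{\Cvh}
  = -\,\Dt\,\scalEh{\ROTv(\delta\Esh+\Esh)}{\Cvh}
  \qquad\forall\,\Cvh\in\Eh .
\]
Since $\delta\Esh$ and $\Esh$ both lie in $\Vh$, Lemma~\ref{lem:ElectroMagDeRhamImageOfROT} shows that $-\Dt\,\ROTv(\delta\Esh+\Esh)\in\Eh$; likewise $\delta\Bvh+\Bvh-\Bvh^n\in\Eh$ because each summand belongs to $\Eh$ (recall $\delta\xvh\in\Xhzr$). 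The discrete inner product $\scalEh{\cdot}{\cdot}$ is positive definite on the finite-dimensional space $\Eh$ by the stability bound of Corollary~\ref{Cor:EhEquivNormsGlobal}, hence nondegenerate there. Testing the displayed identity against $\Cvh:=\delta\Bvh+\Bvh-\Bvh^n+\Dt\,\ROTv(\delta\Esh+\Esh)\in\Eh$ therefore forces the equality at the level of functions,
\[
  \delta\Bvh+\Bvh-\Bvh^n = -\,\Dt\,\ROTv(\delta\Esh+\Esh)\ \in\ \ROTv\Vh .
\]

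Finally I would apply $\DIV$ to this relation. Because $\DIV\circ\ROTv=0$ on $\Vh$ — equivalently $\ROTv\Vh\subseteq\KER\big(\DIV\Eh\big)$, which is the first inclusion established in the proof of the exact de~Rham complex, Theorem~\ref{Thm:ElectroMagDeRhamComplex} — the right-hand side has vanishing divergence, so $\DIV(\delta\Bvh+\Bvh-\Bvh^n)=0$ in $\Ph$, i.e. $\DIV\delta\Bvh=\DIV(\Bvh^n-\Bvh)$, which is \eqref{Eq:DIVNewton2}. The one step I would be most careful about is the exactness of the finite-difference Jacobian on the Faraday block: this rests on the fact that, once $\vvh=\Dsh=\qsh=0$, the pairing $\xvh\mapsto\Gs(\xvh)\cdot(\zrv,\Cvh,0,0)^T$ is affine (no nonlinear coupling survives) and on $\theta R_m^{-1}\neq 0$, valid for the range $\theta\in[1/2,1]$ used by the solver; everything else is bookkeeping plus the structural properties of $\Vh$, $\Eh$, $\Ph$ already proved.
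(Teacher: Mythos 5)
Your proposal is correct and follows essentially the same route as the paper's proof: test \eqref{Eq:DivNewton1} against $(\zrv,\Cvh,0,0)^T$, use the linearity of the Faraday block (so the finite-difference Jacobian is exact there, as noted just before the theorem), pass from the weak identity to the pointwise relation $\delta\Bvh+\Bvh-\Bvh^n=-\Dt\,\ROTv(\delta\hEsh+\hEsh)$ by arbitrariness of $\Cvh$, and take the divergence. You merely make explicit a few steps the paper leaves implicit (cancellation of the $\theta R_m^{-1}$ factor, membership of the curl term in $\Eh$ via Lemma~\ref{lem:ElectroMagDeRhamImageOfROT}, and the nondegeneracy of $\scalEh{\cdot}{\cdot}$), which is consistent with the paper's argument.
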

\begin{proof}
  Testing \eqref{Eq:DivNewton1} against $\yvh = (0,\CvhI,0,0)$ yields 
  \begin{align*}
    \Dt^{-1}\ScalEh{\delta\Bvh}{\Cvh}
    + \ScalEh{\ROTv\delta\hEsh}{\Cvh}
    - \ScalEh{\frac{\Bvh-\Bvh^{n}}{\Dt}}{\Cvh}
    - \ScalEh{\ROTv\hEsh}{\Cvh}=0.
  \end{align*}
  Since $\Cvh$ can be selected arbitrarily, the relation above is
  equivalent to
  \begin{align}
    \Dt^{-1} \left[\delta\Bvh+\Bvh-\Bvh^n\right]
    = -\ROTv \left(\delta\hEsh+\hEsh\right).
  \end{align}
  The assertion of the theorem follows by taking the divergence of
  both sides.
\end{proof}
\begin{corollary}
  \label{cor:FlowDIVFreeLinearization}
  If the initial conditions on the magnetic field $\Bv_0$ satisfy that
  $\DIV\Bv_0=0$ then updates defined by \eqref{Eq:NewtonStep} will
  satisfy that
  \begin{align*}
    \DIV\delta\Bvh^{n,(m)}=0
    \quad\forall n\in[0,N],m\in[0,M].
  \end{align*}
  implying that
  \begin{align*}
    \DIV\Bvh^{n}=0
    \quad\forall n\in[0,N].
  \end{align*}
\end{corollary}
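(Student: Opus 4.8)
The plan is to run a nested induction, over the time index $n$ and, within each time step, over the Newton iteration index $m$, with Theorem~\ref{Thm:NewtonDivFormula} as the only structural ingredient. The base of the outer induction is the observation that $\Bvh^0=\IEh(\Bv_0)$ together with the commutativity of the divergence and the interpolation operators established in Lemma~\ref{lem:DIVDiagramCommutes}: since $\DIV\Bv_0=0$ by assumption, we get $\DIV\Bvh^0=\DIV\big(\IEh(\Bv_0)\big)=\IPh\big(\DIV\Bv_0\big)=\IPh(0)=0$.

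For the inductive step, assume $\DIV\Bvh^n=0$ and consider the Newton loop~\eqref{Eq:NewtonStep} that advances the magnetic field from $\Bvh^n$ to $\Bvh^{n+1}$. Its initial guess has $\Bvh$-component $\Bvh^{n+1,(0)}=\Bvh^n$, hence $\DIV\Bvh^{n+1,(0)}=0$ by the outer hypothesis. I would then run an inner induction on $m$: supposing $\DIV\Bvh^{n+1,(m)}=0$, apply Theorem~\ref{Thm:NewtonDivFormula} at the linearization point $\xvh=\xvh^{(m)}$ (whose $\Bvh$-component is precisely $\Bvh^{n+1,(m)}$) to the increment $\delta\xvh^{(m)}$; its $\Bvh$-component $\delta\Bvh^{n,(m)}$ satisfies
\[
\DIV\delta\Bvh^{n,(m)}=\DIV\big(\Bvh^{n}-\Bvh^{n+1,(m)}\big)=0-0=0,
\]
and then the Newton update gives $\DIV\Bvh^{n+1,(m+1)}=\DIV\Bvh^{n+1,(m)}+\DIV\delta\Bvh^{n,(m)}=0$. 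Thus every Newton increment is divergence-free and $\DIV\Bvh^{n+1,(m)}=0$ for all $m$; in particular $\DIV\Bvh^{n+1}=\DIV\Bvh^{n+1,(M)}=0$, closing the outer induction and yielding $\DIV\Bvh^n=0$ for all $n$ as well as the stated divergence-freeness of the increments $\delta\Bvh^{n,(m)}$.

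There is no genuine obstacle here: the argument is a telescoping of the identity in Theorem~\ref{Thm:NewtonDivFormula}, which holds exactly because the discrete Faraday law is linear and the finite-difference Jacobian $DG$ reproduces linear maps without error. The only points needing care are bookkeeping ones --- keeping straight which increment $\delta\Bvh^{n,(m)}$ is associated with which linearization point, and recording that the Newton loop for time level $n+1$ is initialized with the already-accepted field $\Bvh^n$ (rather than with an arbitrary guess), so that the inner induction has a valid base case --- together with the verification of the outer base case, which is where Lemma~\ref{lem:DIVDiagramCommutes} is invoked to transport $\DIV\Bv_0=0$ to $\DIV\Bvh^0=0$.
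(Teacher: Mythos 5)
Your proof is correct and follows essentially the same route as the paper's: the base case via the commuting diagram ($\DIV\Bvh^0=\IPh(\DIV\Bv_0)=0$), then a nested induction over the time index and the Newton iteration index, closing each inner step with the identity of Theorem~\ref{Thm:NewtonDivFormula}. No substantive difference from the paper's argument.
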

\begin{proof}
  The divergence of the initial estimate can be computed using the
  commuting property of the diagram in
  Theorem~\ref{Thm:ElectroMagDeRhamComplex}.
  Indeed,
  \begin{align*}
    \DIV\Bvh^0=\DIV\IEh(\Bv_0)=\IPh(\DIV\Bv_0)=0.
  \end{align*}
  Next, suppose that $\DIVh\Bvh^n=0$.
  Then, by definition we have that $\DIV\Bvh^{n+1,0}=0$.
  For the inductive step we can further assume that
  $\DIV\Bvh^{n+1,m}=0$, so that from
  Theorem~\ref{Thm:NewtonDivFormula} we find that
  \begin{align*}
    \DIV\Bvh^{n+1,(m+1)}
    = \DIV\Bvh^{n+1,(m)} + \DIV\delta\Bvh^{n+1,(m)}=\DIV(2\Bvh^{n+1,(m)}-\Bvh^n)
    = 0,
  \end{align*}
  which implies that assertion of the corollary.
\end{proof}
\section{Well-posedness and stability of the linear solver}
\label{sec:WellPosedness}

The linearization strategy laid out in
subsection~\ref{sec:Linearization} can be summarized as follows.
We are given a set of initial conditions.
Then, at each time step we perform a series of Newton iterations, each
one of these requiring the solution $\delta\xvh\in\Xhzr$ of a linear
system like:
\begin{align}
  \label{eq:flowLinearSystem}
  \partial\Gs(\xvh)\delta\xvh = -G(\xvh).
\end{align}
for any given $\xvh\in\Xhzr$.
To compute the Jacobian $\partial\Gs(\xvh)$ we use the definition:
\begin{equation}
  \big[\partial\Gs(\xvh) \delta\xvh\big]\cdot\yvh
  = \lim_{\epsilon\to 0} 
  \frac{\Gs(\xvh+\epsilon\delta\xvh)\cdot\yvh-\Gs(\xvh)\cdot\yvh}{\epsilon}.
\end{equation}
The limit above yields
\begin{align}
  \left[\partial\Gs(\xvh) \delta\xvh\right]\cdot\yvh =
  \ell_1(\yvh)+
  \ell_2(\yvh)+
  \ell_3(\yvh)+
  \ell_4(\yvh),
  \label{eq:Jacobian}
\end{align}
where 
$\xvh=(\huvhI,\BvhI,\hEshI,\pshI)^T$,
$\delta\xvh=(\delta\huvhI,\delta\BvhI,\delta\hEshI,\delta\pshI)^T$,\\
$\yvh=(\vvhI,\CvhI,\DshI,\qshI)^T$, and
\begin{align*}
  \ell_1(\yvh)
  &= \Dt^{-1}\ScalTVh{\delta\huvh}{\vvh}
  + \theta R_e^{-1}\ScalTVhNa{\delta\huvh}{\vvh}
  \\[0.5em]
  &\quad
  + \theta\ScalVh{\hEsh}{\IVh(\vvh\times\PiEhRT\delta\Bvh)}
  \\[0.5em]  
  &\quad
  + \theta\ScalVh{\delta \hEsh}{\IVh(\vvh\times\PiEhRT\Bvh)}
  -\ScalPh{\DIV\vvh}{\psh},
  \\[0.5em]
  \ell_2(\yvh)
  &= \theta\ScalPh{\DIV\delta\huvh}{\qsh},
  \\[0.5em]
  \ell_3(\yvh)
  & =
  \Dt^{-1}\ScalEh{\delta\Bvh}{\Cvh}+\ScalEh{\ROTv\delta\Esh}{\Cvh},
  \\[0.5em]
  \ell_{4}(\yvh)
  & =
  \ScalVh{
    \delta\hEsh+\theta\IVh(\huvh\times\PiEhRT\delta\Bvh
    +\delta\huvh\times\PiEhRT\Bvh)
  }{\Dvh}
  \\[0.5em]
  &\quad
  +R_m^{-1}\theta\scalEh{\delta\Bvh}{\ROTvh\Dsh}.
\end{align*}
These linear systems are in the form of a saddle-point problem
satisfying the hypothesis of the following theorem, which can be used
to prove the well posedness.
\begin{theorem}\label{thm:Saddle Point Problems}
  Let $U$ and $P$ be Hilbert spaces respectively endowed with the norms
  $\NORM{\cdot}{U}$ and $\NORM{\cdot}{P}$.
  Let $a:U\times U\to\REAL$, $b:U\times P\to\REAL$ be two bounded
  bilinear forms satisfying the inf-sup conditions
  \begin{equation}
    \inf_{u\in U_0}
    \sup_{v\in U_0}
    \frac{a(u,v)}{\NORM{u}{U}\,\NORM{v}{U}}>0,\quad
    \inf_{p\in P}
    \sup_{u\in U}
    \frac{b(u,p)}{\NORM{u}{U}\,\NORM{p}{P}}>0,
  \end{equation}
  where
  \begin{equation}
    U_0 = \big\{u\in U:\forall p\in P\quad b(u,p) =0\big\}.
  \end{equation}
  Then, for every pair of bounded linear functionals $f\in U'$ and
  $g\in P'$
  there exists unique $u\in U$ and $p\in P$ such that for any
  $v\in U$ and $q\in P$ it is the case that
  \begin{align*}
    a(u,v)-b(v,p) &= f(v)\phantom{g(q)}\quad\forall v\in U,\\
    b(u,q)        &= g(q)\phantom{f(v)}\quad\forall q\in P.
  \end{align*}
  Moreover there exists a constant $C>0$ independent of $f$ and $g$
  such that
  \begin{equation}
    \NORM{u}{U} + \NORM{p}{P}
    \leq C\left( \NORM{f}{U'} +\NORM{g}{P'} \right),
  \end{equation}
  with the (standard) definition of the norms in the dual spaces:
  \begin{equation}
    \NORM{f}{U'} = \sup_{u\in U\setminus\{0\}} \frac{|f(u)|}{\NORM{u}{U}},\quad
    \NORM{g}{P'} = \sup_{p\in P\setminus\{0\}} \frac{|g(p)|}{\NORM{p}{P}}.
  \end{equation}
\end{theorem}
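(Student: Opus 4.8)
The plan is to recognise this as the classical Brezzi well-posedness theorem for abstract saddle-point problems and to reproduce its standard operator-theoretic proof. First I would recast the variational system in operator form: define the bounded linear operators $A\colon U\to U'$, $B\colon U\to P'$ and the transpose $B^{\ast}\colon P\to U'$ by $\langle Au,v\rangle=a(u,v)$, $\langle Bu,q\rangle=b(u,q)$ and $\langle B^{\ast}p,v\rangle=b(v,p)$, so that the two equations of the theorem become $Au-B^{\ast}p=f$ in $U'$ and $Bu=g$ in $P'$. The subspace $U_0=\ker B$ coincides with the set in the statement and, being the kernel of a bounded operator, is closed, hence a Hilbert space for the norm of $U$.

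Next I would exploit the inf-sup condition on $b$. Via the closed range theorem (using reflexivity of Hilbert spaces) it is equivalent to the pair of statements: $B$ is onto $P'$ and admits a bounded right inverse on $U_0^{\perp}$, and $B^{\ast}$ is an isomorphism from $P$ onto the annihilator $U_0^{\circ}\subset U'$ with $\|B^{\ast}p\|_{U'}\ge\beta\,\|p\|_{P}$, where $\beta>0$ is the inf-sup constant of $b$. In particular there exists $u_g\in U_0^{\perp}$ with $Bu_g=g$ and $\|u_g\|_{U}\le\beta^{-1}\|g\|_{P'}$.

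Then I would reduce the problem. Writing $u=u_0+u_g$ with $u_0\in U_0$, the equation $Bu=g$ holds automatically; testing the first equation only against $v\in U_0$ annihilates the term $b(v,p)$ and leaves the reduced problem: find $u_0\in U_0$ with $a(u_0,v)=f(v)-a(u_g,v)$ for every $v\in U_0$. This is solved by the Banach--Ne\v{c}as--Babu\v{s}ka theorem applied to $a$ on $U_0\times U_0$; here one uses the first inf-sup hypothesis together with the transposed nondegeneracy $\sup_{u\in U_0}a(u,v)>0$ for each nonzero $v\in U_0$, which is immediate when $a$ is symmetric, as is the case for the Jacobian~\eqref{eq:Jacobian} after the symmetrising modification of Faraday's law. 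This gives a unique $u_0$ with $\|u_0\|_{U}\le\alpha^{-1}\bigl(\|f\|_{U'}+\|a\|\,\|u_g\|_{U}\bigr)$, $\alpha$ being the inf-sup constant of $a$ on $U_0$. To recover $p$, note that the residual $r:=f-A(u_0+u_g)\in U'$ satisfies $\langle r,v\rangle=0$ for all $v\in U_0$ by the reduced equation, so $r\in U_0^{\circ}$, which is exactly the range of $B^{\ast}$; hence $p:=(B^{\ast})^{-1}(-r)$ is well defined, satisfies $Au-B^{\ast}p=f$, and obeys $\|p\|_{P}\le\beta^{-1}\|r\|_{U'}\le\beta^{-1}\bigl(\|f\|_{U'}+\|a\|\,\|u\|_{U}\bigr)$. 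Uniqueness follows by rerunning the argument with $f=g=0$: then $u_g=0$, the inf-sup on $a$ forces $u_0=0$, and injectivity of $B^{\ast}$ forces $p=0$. Collecting the three estimates and using the boundedness of $a$ yields the stability bound with $C$ depending only on $\alpha$, $\beta$ and $\|a\|$.

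The main obstacle is not a computation but the functional-analytic step of turning the inf-sup condition on $b$ into the surjectivity of $B$, the closedness of its range, and the a priori bound on the lifting $u_g$; this rests on the closed range / open mapping theorem. A secondary, bookkeeping-type point is the hidden transposed condition on $a$ needed for \emph{existence} (not merely uniqueness) in the reduced problem on $U_0$, which I would either derive from symmetry of $a$ in the present application or add explicitly to the hypotheses.
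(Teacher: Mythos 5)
The paper offers no proof of its own here---it simply cites Brezzi (1974) and Boffi--Brezzi--Fortin---and your argument is exactly the standard operator-theoretic proof given in those references (kernel decomposition, closed-range/lifting argument for $b$, reduced problem on $U_0$, recovery of $p$ from the residual in the annihilator of $U_0$), so it is correct and takes the same route the paper implicitly relies on. Your closing remark is also well taken: with only the single inf-sup condition for $a$ on $U_0$ stated in the theorem, existence in the reduced problem is not guaranteed in general; the cited references assume either coercivity of $a$ on the kernel or the two-sided (transposed as well) inf-sup condition, and in the present application the symmetrised Jacobian is what supplies the missing condition, so making that hypothesis explicit---as you propose---is the right fix.
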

\BEGINPROOF
A proof of this theorem can be found
in~\cite{Brezzi:1974,Boffi-Brezzi-Fortin:2013}.
\ENDPROOF

Consider the space:
\begin{equation}
  \GXh = \TVhzr\times\Eh\times\Vhzr,
\end{equation}
and the bilinear form $\ash:\GXh\times\GXh\to\REAL$, whose evaluation at
$\delta\xivh = (\delta\huvh,\delta\Bvh,\delta\hEsh)$,
$\etavh = (\vvh,\Cvh,\Dsh)$
is
given by $\ash(\delta\xivh,\etavh)=\ell_1(\vvh)+\ell_2(\Cvh)+\ell_3(\Dsh)$,
cf. equation~\eqref{eq:Jacobian}.
Here, and for the remainder of the section, we fix the value
$\xvh=(\huvh,\Bvh,\hEsh)$.
We can reformulate problem~\eqref{eq:flowLinearSystem} as:\\
\emph{Find $(\delta\xivh,\delta\psh)\in\GXh\times\Phzr$ such that for
all $(\etavh,\qsh)\in\GXh\times\Phzr$ it holds that}
\begin{subequations}
  \label{Eq:Prob1}
  \vspace{-0.5\baselineskip}
  \begin{align}
    \ash(\delta\xivh,\etavh)-\bsh(\vvh,\delta\psh) &= f(\etavh),\\
    \bsh(\delta\huvh,\qsh)                         &= g(\qsh),
  \end{align}
\end{subequations}
Where $f\in\GXh'$ and $g\in\Phzr'$ are some appropriate bounded linear
functionals and
\begin{equation}
  \bsh(\vvh,\qsh)=\scalPh{\DIV\vvh}{\qsh}.
\end{equation}

\medskip
The strategy we follow to prove the well-posedness of the virtual
element approximation proceeds in three steps:
\begin{itemize}
\item[]$(i)$ we introduce an auxiliary problem;
\item[]$(ii)$ we show that the auxiliary problem and problem
  \eqref{Eq:Prob1} are equivalent;
\item[]$(iii)$ we show that the auxiliary problem is well posed.
\end{itemize}
In the rest of this section we briefly sketch the various steps of this argument, see Chapter 5 in~\cite{Sebas2021} for details.

\medskip
The auxiliary problem is given by: \emph{Find
$(\delta\xivh,\delta\psh)\in\GXh\times\Phzr$ such that for all
$(\etavh,\qsh)\in\GXh\times\Phzr$ it holds that}
\begin{subequations}
  \label{Eq:Prob2}
  \vspace{-\baselineskip}
  \begin{align}
    \ashzr(\delta\xivh,\etavh) - \bsh(\vvh,\delta\psh) &= \fsh(\etavh),\\
    \bsh(\delta\huvh,\qsh)                             &= \gsh(\qsh).
  \end{align}
\end{subequations}
Note that 
\begin{equation}
  \label{eq:defashzr}
  \ashzr(\delta\xivh,\etavh) = \ash(\delta\xivh,\etavh) + \theta R_m^{-1}\ScalPh{\DIV\delta\Bvh}{\DIV\Cvh}.
\end{equation}
Then, to establish the equivalence between \eqref{Eq:Prob1} and
\eqref{Eq:Prob2}, we need to ensure that approximations using the auxiliary problem~\eqref{Eq:Prob2} will have divergence free magnetic fields.
This is settled in the following Theorem:
\begin{theorem}
  \label{Thm:FlowDivFreeAuxiliary}
  Let $\delta\xivh = (\delta\huvh,\delta\Bvh,\delta\hEsh)\in\GXh$ and
  $\psh\in\Phzr$ solve \eqref{Eq:Prob2}.
  If the initial conditions on the magnetic field are divergence free,
  then it holds that $\DIV\delta\Bvh=0$.
\end{theorem}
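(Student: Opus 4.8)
The plan is to isolate the discrete Faraday law hidden in the auxiliary problem~\eqref{Eq:Prob2} and then to test it against one cleverly chosen element of $\Eh$. First I would take $\etavh=(0,\Cvh,0)$ with $\Cvh\in\Eh$ arbitrary and $\qsh=0$ in the first equation of~\eqref{Eq:Prob2}. Since the velocity component of $\etavh$ vanishes, the term $\bsh(\vvh,\delta\psh)$ drops out and only the magnetic-field row of $\ashzr$ and of the right-hand side survive. Recalling the definition~\eqref{eq:defashzr} of $\ashzr$, dividing by the common factor $\theta R_{m}^{-1}$ and multiplying through by $\Dt$, this row reads: for every $\Cvh\in\Eh$,
\begin{equation*}
  \ScalEh{\delta\Bvh+\Bvh-\Bvh^{n}}{\Cvh}
  + \Dt\,\ScalEh{\ROTv(\delta\hEsh+\hEsh)}{\Cvh}
  + \mu\,\ScalPh{\DIV\delta\Bvh}{\DIV\Cvh} = 0,
\end{equation*}
where $\mu>0$ is a constant (one may take $\mu=\Dt$), the fields $\Bvh$ and $\hEsh$ are the fixed components of the linearization point, and $\Bvh^{n}$ is the magnetic field at the previous time level.

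Next I would set $A:=(\delta\Bvh+\Bvh-\Bvh^{n})+\Dt\,\ROTv(\delta\hEsh+\hEsh)$. Since $\delta\hEsh,\hEsh\in\Vhzr\subset\Vh$, Lemma~\ref{lem:ElectroMagDeRhamImageOfROT} gives $\ROTv(\delta\hEsh+\hEsh)\in\Eh$, so $A\in\Eh$; moreover $\DIV\ROTv$ vanishes on $\Vh$, hence $\DIV A=\DIV\delta\Bvh+\DIV(\Bvh-\Bvh^{n})$. I would then use the hypothesis: since $\DIV\Bv_{0}=0$, the commuting diagram of Theorem~\ref{Thm:ElectroMagDeRhamComplex} yields $\DIV\Bvh^{0}=\IPh(\DIV\Bv_{0})=0$, and propagating this through the time stepping and the Newton iterations (exactly as in Corollary~\ref{cor:FlowDIVFreeLinearization}) gives $\DIV\Bvh^{n}=0$ as well as $\DIV\Bvh=0$ for the linearization point; therefore $\DIV(\Bvh-\Bvh^{n})=0$ and $\DIV A=\DIV\delta\Bvh$.

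Finally I would test the identity from the first step with the admissible choice $\Cvh=A\in\Eh$. Using $\DIV A=\DIV\delta\Bvh$ and the fact that $\ScalPh{\cdot}{\cdot}$ is the $\LTWO(\Omega)$ inner product on $\Ph$, the identity collapses to
\begin{equation*}
  \TNORM{A}{\Eh}^{2} + \mu\,\NORM{\DIV\delta\Bvh}{0,\Omega}^{2} = 0 .
\end{equation*}
Both summands are nonnegative and $\mu>0$, so $\DIV\delta\Bvh=0$ (and, as a by-product, $A=0$, the discrete strong form of Faraday's law, cf. Theorem~\ref{Thm:NewtonDivFormula}). The main obstacle I anticipate is not the final one-line energy argument but the bookkeeping in the first two steps: reading off the penalized Faraday row of~\eqref{Eq:Prob2} with the correct constants, and invoking the divergence-free initial data — through the commuting de~Rham diagram and the standing induction — to secure $\DIV(\Bvh-\Bvh^{n})=0$, which is precisely what forces $\DIV A=\DIV\delta\Bvh$ and lets the test $\Cvh=A$ annihilate the divergence.
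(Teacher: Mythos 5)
Your argument is correct, and there is nothing in the paper to compare it against line by line: the paper does not print a proof of Theorem~\ref{Thm:FlowDivFreeAuxiliary} but defers it to Chapter~5 of \cite{Sebas2021}. Your route is the natural one and fits the surrounding machinery exactly: isolate the penalized Faraday row of \eqref{Eq:Prob2} by testing with $\etavh=(0,\Cvh,0)$ and $\qsh=0$, use Lemma~\ref{lem:ElectroMagDeRhamImageOfROT} to see that the residual vector $A=(\delta\Bvh+\Bvh-\Bvh^{n})+\Dt\,\ROTv(\delta\hEsh+\hEsh)$ is an admissible test function in $\Eh$, and then take $\Cvh=A$, so that the nonnegativity of the VEM inner product on $\Eh$ together with the strictly positive div--div penalty of \eqref{eq:defashzr} forces $\DIV\delta\Bvh=0$ (and $A=0$, recovering the discrete Faraday identity of Theorem~\ref{Thm:NewtonDivFormula}). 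Two bookkeeping points deserve to be made explicit in a final write-up, though neither is a gap. First, the hypothesis on the initial data enters only through the claim $\DIV\Bvh=\DIV\Bvh^{n}=0$ for the frozen linearization point and the previous time level, so the statement really sits inside a joint induction over time steps and Newton iterations, with base case $\DIV\Bvh^{0}=\IPh(\DIV\Bv_0)=0$ from the commuting diagram of Theorem~\ref{Thm:ElectroMagDeRhamComplex}; citing Corollary~\ref{cor:FlowDIVFreeLinearization} verbatim is slightly off, since that corollary concerns the unpenalized iteration \eqref{Eq:NewtonStep}, but the identical induction with your per-step argument closes the loop. Second, if the Newton residual also carries the boundary datum through $\ROTv\IVh\Es_b^{n+\theta}$, that term is again a rotational, hence divergence free, and can be absorbed into $A$ without altering anything; likewise only the positivity of your constant $\mu$ matters (its exact value, $\Dt$ or $\theta R_m^{-1}\Dt$ with $\theta>0$, is immaterial).
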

We can leverage the result of this theorem to show that both problems
\eqref{Eq:Prob1} and \eqref{Eq:Prob2} are equivalent as stated by the
following lemma.
\begin{lemma}
  \label{lem:flowequivalentproblems}
  The problems \eqref{Eq:Prob1} and \eqref{Eq:Prob2} are equivalent.
\end{lemma}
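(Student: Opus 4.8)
The plan is to notice that problems \eqref{Eq:Prob1} and \eqref{Eq:Prob2} differ only through the extra term $\theta R_m^{-1}\ScalPh{\DIV\delta\Bvh}{\DIV\Cvh}$ present in $\ashzr$ but absent from $\ash$, cf.\ \eqref{eq:defashzr}; moreover the right-hand side functionals agree, since under the standing assumption $\DIV\Bv_0=0$ we have $\DIV\Bvh=0$ and $\DIV\Bvh^n=0$ by Corollary~\ref{cor:FlowDIVFreeLinearization}, so any divergence-penalty contribution to the residual vanishes and $\fsh=f$, $\gsh=g$. Consequently it suffices to prove that \emph{any} solution of either problem satisfies $\DIV\delta\Bvh=0$: once this is established, the penalty term is annihilated against every test function $\Cvh\in\Eh$, so $\ashzr(\delta\xivh,\etavh)=\ash(\delta\xivh,\etavh)$ for all $\etavh\in\GXh$, and the two variational systems coincide term by term, hence have the same solutions.

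For a solution $(\delta\xivh,\delta\psh)$ of \eqref{Eq:Prob2} the identity $\DIV\delta\Bvh=0$ is exactly Theorem~\ref{Thm:FlowDivFreeAuxiliary}, so this solution is at once a solution of \eqref{Eq:Prob1}. For the converse I would argue directly on \eqref{Eq:Prob1}: choosing the test pair $\etavh=(\zrv,\Cvh,\zrv)$ with $\Cvh\in\Eh$ arbitrary and $\qsh=0$ isolates the discrete Faraday block, giving $\ScalEh{\Dt^{-1}(\delta\Bvh+\Bvh-\Bvh^n)+\ROTv(\delta\hEsh+\hEsh)}{\Cvh}=0$ for every $\Cvh\in\Eh$. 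Since $\ROTv(\delta\hEsh+\hEsh)\in\Eh$ by Lemma~\ref{lem:ElectroMagDeRhamImageOfROT} and $\scalEh{\cdot}{\cdot}$ is a genuine inner product on $\Eh$, this forces $\delta\Bvh+\Bvh-\Bvh^n=-\Dt\,\ROTv(\delta\hEsh+\hEsh)$ in $\Eh$; taking the divergence and using $\DIV\ROTv=0$ yields $\DIV\delta\Bvh=\DIV(\Bvh^n-\Bvh)=0$ --- this is precisely the argument of Theorem~\ref{Thm:NewtonDivFormula}. Hence the penalty term vanishes and $(\delta\xivh,\delta\psh)$ solves \eqref{Eq:Prob2} as well.

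Combining the two inclusions gives the asserted equivalence of the solution sets of \eqref{Eq:Prob1} and \eqref{Eq:Prob2}, which is what allows the well-posedness to be established on the auxiliary problem and then transferred back. The step I expect to require the most care is the converse direction: one must verify that the chosen test slice $\etavh=(\zrv,\Cvh,\zrv)$ together with $\qsh=0$ really does decouple the Faraday row of $\ash$ --- i.e.\ that the velocity and Ohm--Amp\`ere blocks $\ell_1$ and $\ell_4$ contribute nothing --- and that the $\Cvh$-component of the right-hand side $f$ is exactly the discrete Faraday residual $-\big(\Dt^{-1}\ScalEh{\Bvh-\Bvh^n}{\Cvh}+\ScalEh{\ROTv\hEsh}{\Cvh}\big)$ inherited from $-G(\xvh)$. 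Everything else is routine bookkeeping with the de~Rham identities already established in Section~\ref{subsec:ElectroMagDeRham}.
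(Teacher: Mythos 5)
Your proposal is correct and takes essentially the route the paper itself indicates: the two problems differ only by the penalty term $\theta R_m^{-1}\scalPh{\DIV\delta\Bvh}{\DIV\Cvh}$, which vanishes because solutions of \eqref{Eq:Prob2} have divergence-free $\delta\Bvh$ by Theorem~\ref{Thm:FlowDivFreeAuxiliary}, while solutions of \eqref{Eq:Prob1} have divergence-free $\delta\Bvh$ by the computation of Theorem~\ref{Thm:NewtonDivFormula} combined with Corollary~\ref{cor:FlowDIVFreeLinearization}. The paper states the lemma without a detailed proof (deferring to the cited thesis), but your two-inclusion argument is exactly the strategy it sketches, with the converse direction filled in appropriately.
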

Finally, we present the well-posedness of \eqref{Eq:Prob2}.
Following the framework laid out in~\cite{Hu-Ma-Xu:2017}, we introduce the norm on $\GXhzr$ such that for any
$\xivh = (\uvh,\Bvh,\Esh)\in\GXhzr$ we have
\begin{subequations}
  \begin{align}
    \TNORM{\xivh}{\GXhzr}^2 &:= \GradNorm{\vvh}^2+
    \CurlNorm{\Esh}^2 + \DivNorm{\Bvh}^2,\label{eq:norm-Xh:def}\\
    \GradNorm{\uvh}^2 &:=
    \Dt^{-1}\TNORM{\uvh}{\TVh}^2+
    \snorm{\uvh}{\TVh}^2+
    \Dt^{-1}\TNORM{\DIV\uvh}{\Ph}^2,
    \\[0.25em]
    \DivNorm {\Bvh}^2  &: = \Dt^{-1}\TNORM{\Bvh}{\Eh}^2+\TNORM{\DIV\Bvh}{\Ph}^2,\label{eq:DivNorm}\\[0.25em]
    \CurlNorm{\Esh}^2  &:= \TNORM{\Esh}{\Vh}^2 + \Dt\TNORM{\ROTv\Esh}{\Eh}^2.\label{eq:CurlNorm:def}
  \end{align}
\end{subequations}
Well-posedness relies on Theorem~\ref{thm:Saddle Point Problems}.
The first hypothesis established that the bilinear forms in the formulation of
\eqref{Eq:Prob2} are continuous
\begin{lemma}\label{Thm:ahzrcontinuous}
  Suppose that $\Dt^{1/2}\huvh,\huvh,\Bvh\in [L^{\infty}(\Omega)]^2$
  and $\hEsh\in L^{\infty}(\Omega)$.
  Then, the bilinear form $\ashzr$ is continuous in the norms defined
  in \eqref{eq:norm-Xh:def}.
\end{lemma}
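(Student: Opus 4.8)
The plan is to split $\ashzr$ into its elementary bilinear pieces and dominate each one by a fixed multiple of $\TNORM{\delta\xivh}{\GXhzr}\,\TNORM{\etavh}{\GXhzr}$, with a constant depending only on $R_e$, $R_m$, $\theta$, the mesh-regularity parameter $\rho$ of \textbf{(M1)}--\textbf{(M2)}, the stabilization constants, and the finite quantities $\NORM{\huvh}{\LINF(\Omega)}$, $\NORM{\Dt^{1/2}\huvh}{\LINF(\Omega)}$, $\NORM{\Bvh}{\LINF(\Omega)}$, $\NORM{\hEsh}{\LINF(\Omega)}$; adding the pieces with the triangle inequality then yields the asserted continuity on $\GXhzr\times\GXhzr$. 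Writing $\delta\xivh=(\delta\huvh,\delta\Bvh,\delta\hEsh)$ and $\etavh=(\vvh,\Cvh,\Dsh)$, I recall from \eqref{eq:defashzr} that $\ashzr(\delta\xivh,\etavh)$ collects the parts of $\ell_1(\vvh)$, $\ell_3(\Cvh)$ and $\ell_4(\Dsh)$ in \eqref{eq:Jacobian} that do not involve the pressure increment, together with the augmentation $\theta R_m^{-1}\ScalPh{\DIV\delta\Bvh}{\DIV\Cvh}$. By Lemmas~\ref{lem:ElectroMagDeRhamImageOfROT} and \ref{lem:ElectroMagDeRhamImageOfDIV}, every $\ROTv$- and $\DIV$-image occurring here lands in $\Eh$ or $\Ph$, so all the objects under the discrete inner products genuinely live in the spaces whose norms are built into $\TNORM{\cdot}{\GXhzr}$ via \eqref{eq:norm-Xh:def}.

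The ``diagonal'' contributions I would handle by a direct Cauchy--Schwarz inequality in the pertinent discrete inner product followed by a redistribution of the $\Dt^{\pm 1/2}$ weights so that the resulting factors match pieces of \eqref{eq:norm-Xh:def}: for instance $\Dt^{-1}\ScalEh{\delta\Bvh}{\Cvh}$ and $\theta R_m^{-1}\ScalPh{\DIV\delta\Bvh}{\DIV\Cvh}$ are bounded by $\DivNorm{\delta\Bvh}\DivNorm{\Cvh}$; the Faraday coupling $\ScalEh{\ROTv\delta\hEsh}{\Cvh}\le\big(\Dt^{1/2}\TNORM{\ROTv\delta\hEsh}{\Eh}\big)\big(\Dt^{-1/2}\TNORM{\Cvh}{\Eh}\big)$ by $\CurlNorm{\delta\hEsh}\DivNorm{\Cvh}$, and symmetrically $R_m^{-1}\theta\,\scalEh{\delta\Bvh}{\ROTv\Dsh}$ by $\DivNorm{\delta\Bvh}\CurlNorm{\Dsh}$; the viscous term $\theta R_e^{-1}\ScalTVhNa{\delta\huvh}{\vvh}$ by $\snorm{\delta\huvh}{\TVh}\snorm{\vvh}{\TVh}\le\GradNorm{\delta\huvh}\GradNorm{\vvh}$, the mass term $\Dt^{-1}\ScalTVh{\delta\huvh}{\vvh}$ by $\GradNorm{\delta\huvh}\GradNorm{\vvh}$, and $\ScalVh{\delta\hEsh}{\Dsh}$ by $\CurlNorm{\delta\hEsh}\CurlNorm{\Dsh}$. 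No $L^\infty$ information is needed in any of these, and the constants are manifestly independent of $\hh$.

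The crux is the four convective trilinear terms, each of the shape $\theta\ScalVh{a}{\IVh(b\times\PiEhRT c)}$ (or with the interpolated cross product in the first slot), in which exactly one of $a,b,c$ is a frozen field among $\hEsh,\huvh,\Bvh$ and the other two are the trial and test unknowns. For these I would argue cell by cell, using: (i) the $L^2$-stability of the orthogonal projector $\PiEhRT$, immediate from its variational definition \eqref{eq:EhProjDefs:B}, together with an inverse inequality on the low-order polynomial field $\PiEhRTP(\cdot)$, giving $\NORM{\PiEhRTP z}{\LINF(\P)}\le C(\rho)\,\hP^{-1}\NORM{z}{0,\P}$; (ii) the fact that functions of $\Vh(\P)$ are harmonic with piecewise-linear boundary trace, so that the maximum principle yields $\NORM{\IVhP g}{\LINF(\P)}\le\max_{\V\in\partial\P}\ABS{g(\V)}$ for any $g$ admitting vertex values, together with a one-dimensional trace/inverse inequality bounding $\max_{\V\in\partial\P}\ABS{\vvh(\V)}$ by $C(\rho)\,\hP^{-1}\big(\NORM{\vvh}{0,\P}+\hP\,\snorm{\vvh}{1,\P}\big)$; and (iii) the elementary observation that a frozen $L^\infty$ field $\Phi\in\{\hEsh,\huvh,\Bvh\}$ satisfies $\NORM{\Phi}{0,\P}\le\hP\NORM{\Phi}{\LINF(\P)}$, which cancels the stray powers of $\hP^{-1}$ produced by (i)--(ii). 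Combining these on each cell, using the discrete norm equivalences of Corollaries~\ref{Coro:EquivalentInProdsVh(P)}, \ref{Cor:EhEquivNormsGlobal} and \ref{cor:EquivalentNormsTVh} to pass between discrete and $L^2$/$H^1$ norms, and summing over the mesh with the Cauchy--Schwarz inequality for sequences, each such term is bounded by $C\,\NORM{\Phi}{\LINF(\Omega)}$ times two components of $\TNORM{\cdot}{\GXhzr}$; the $\Dt^{\pm 1/2}$ weights carried by $\GradNorm{\cdot}$, $\CurlNorm{\cdot}$, $\DivNorm{\cdot}$ leave over only nonnegative powers of $\Dt$, and it is precisely in the term where a frozen $\huvh$-factor multiplies a $\Dt^{-1/2}$-weighted unknown that the weighted hypothesis $\Dt^{1/2}\huvh\in[L^\infty(\Omega)]^2$ is invoked to keep the constant uniform.

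I expect the bookkeeping of type (iii) and, above all, the treatment of $\IVhP(b\times\PiEhRT c)$ to be the main obstacle: the functions of $\Vh(\P)$ are virtual, so this interpolant has no closed form and the local bounds must be obtained through the scaling and stability apparatus of Section~\ref{sec:VirtualElements} and mesh regularity rather than by direct computation; moreover one must either verify that $b\times\PiEhRT c$ has single-valued vertex traces --- which it does, since $b\in\{\vvh,\huvh\}$ has continuous edge traces even though $\PiEhRT c$ is only piecewise polynomial --- or else work purely with the local interpolants $\IVhP$ and reassemble the resulting $\Vh(\P)$-contributions. Once every term has been dominated by $C\,\TNORM{\delta\xivh}{\GXhzr}\TNORM{\etavh}{\GXhzr}$ with $C=C\big(R_e,R_m,\theta,\rho,s_*,s^*,\NORM{\huvh}{\LINF(\Omega)},\NORM{\Dt^{1/2}\huvh}{\LINF(\Omega)},\NORM{\Bvh}{\LINF(\Omega)},\NORM{\hEsh}{\LINF(\Omega)}\big)$, summing them proves that $\ashzr$ is continuous in the norm \eqref{eq:norm-Xh:def}, as claimed.
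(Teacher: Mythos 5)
You should first note that the paper does not actually prove this lemma: it is stated within a sketched three-step well-posedness argument and the details are explicitly deferred to Chapter~5 of \cite{Sebas2021}, so there is no in-paper proof to compare yours against step by step. Judged on its own merits, your outline is a sound and essentially the natural route. The linear (``diagonal'') terms are treated exactly as they must be: Cauchy--Schwarz in each discrete inner product followed by redistribution of the $\Dt^{\pm 1/2}$ weights so that every factor is a component of the norm \eqref{eq:norm-Xh:def}, with no $L^\infty$ information required. For the four frozen-coefficient coupling terms your mechanism is the right one: the local interpolant into $\Vh(\P)$ is controlled by its vertex values (your maximum-principle observation is correct, since $\ROT\ROTv\Dsh=-\Delta\Dsh$, so functions of $\Vh(\P)$ are harmonic with continuous piecewise-linear traces), vertex values of the trial/test fields are controlled by scaled trace and inverse inequalities, $\PiEhRTP$ is $L^2$-stable and satisfies a polynomial inverse estimate, and the resulting $\hP^{-1}$ factors are absorbed through $\NORM{\Phi}{0,\P}\le C\,\hP\NORM{\Phi}{\LINF(\P)}$ applied to the frozen field; Cauchy--Schwarz over the cells together with the stability results of Theorem~\ref{Thm:EquivalentInProdsVh(P)}, Theorem~\ref{Thm:EhPInProdProperties} and Corollary~\ref{cor:EquivalentNormsTVh} then yields a constant depending only on the $L^\infty$ data, the mesh regularity, $\theta$, $R_e$, $R_m$ and nonnegative powers of $\Dt$, which is what the statement asserts.

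One parenthetical claim in your third paragraph is wrong: $\vvh\times\PiEhRT\Bvh$ does \emph{not} have single-valued vertex values, because $\PiEhRT$ is defined elementwise and its output is generally discontinuous at a vertex shared by several cells, irrespective of the continuity of $\vvh$. This does not damage the argument, since your stated fallback --- working purely with the local interpolants $\IVhP$ and assembling cell by cell --- is precisely how the global operator is defined in the paper (its restriction to $\P$ is $\IVhP$ applied to the restriction), and all of your estimates are local anyway; but the parenthetical justification should be dropped rather than repaired.
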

The next lemma guarantees that the bilinear form $\ashzr$ satisfies the inf-sup condition.
\begin{lemma}\label{Thm:ahzrInfSup}
  Let $\theta>0$, and $\huvh,\Bvh\in [L^{\infty}(\Omega)]^2$ and
  $\hEsh\in L^{\infty}(\Omega)$ .
  For a $\Dt$ small enough, we have that
  \begin{align*}
    \inf_{\delta\xivh\in\GXhzr}\sup_{\etavh\in\GXhzr}
    \frac{\ashzr(\delta\xivh,\etavh)}{\TNORM{\delta\xivh}{\GXh}\TNORM{\etavh}{\GXh}}\geq\Cs>0,
  \end{align*}
  where $\GXhzr=\big\{(\vvh,\Bvh,\Esh):\DIV\vvh=0\big\}$ and $C$ is a
  strictly positive, real constant independent of $\hh$ and $\Dt$.
\end{lemma}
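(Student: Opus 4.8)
The plan is to follow the strategy of~\cite{Hu-Ma-Xu:2017} and prove the inf-sup estimate by exhibiting, for every $\delta\xivh=(\delta\huvh,\delta\Bvh,\delta\hEsh)\in\GXhzr$, an explicit test function $\etavh\in\GXhzr$ for which $\ashzr(\delta\xivh,\etavh)\geq c\,\TNORM{\delta\xivh}{\GXh}^2$ and $\TNORM{\etavh}{\GXh}\leq C\,\TNORM{\delta\xivh}{\GXh}$; the claimed bound then follows by passing to the quotient. Since $\DIV\delta\huvh=0$ on $\GXhzr$, the contribution $\Dt^{-1}\TNORM{\DIV\delta\huvh}{\Ph}^2$ in the norm~\eqref{eq:norm-Xh:def} vanishes, so one only has to recover $\Dt^{-1}\TNORM{\delta\huvh}{\TVh}^2$, $\snorm{\delta\huvh}{\TVh}^2$, $\Dt^{-1}\TNORM{\delta\Bvh}{\Eh}^2$, $\TNORM{\DIV\delta\Bvh}{\Ph}^2$, $\TNORM{\delta\hEsh}{\Vh}^2$ and $\Dt\TNORM{\ROTv\delta\hEsh}{\Eh}^2$.

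First I would test with $\etavh=\delta\xivh$. The symmetric part of $\ashzr$ produces directly $\Dt^{-1}\TNORM{\delta\huvh}{\TVh}^2+\theta R_e^{-1}\snorm{\delta\huvh}{\TVh}^2+\theta R_m^{-1}\Dt^{-1}\TNORM{\delta\Bvh}{\Eh}^2+\theta R_m^{-1}\TNORM{\DIV\delta\Bvh}{\Ph}^2+\TNORM{\delta\hEsh}{\Vh}^2$; here $\theta>0$ is needed for the dissipation and augmentation terms, and the two curl cross-contributions coming from Faraday's law and Ohm--Amp\`ere's law, $\pm\theta R_m^{-1}\ScalEh{\ROTv\delta\hEsh}{\delta\Bvh}$, cancel by construction of the symmetrized form~\eqref{eq:NewVEMVarFormFaraday}--\eqref{eq:defashzr}. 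What remains are the frozen-state coupling terms, namely $\theta\ScalVh{\hEsh}{\IVh(\delta\huvh\times\PiEhRT\delta\Bvh)}$, $\theta\ScalVh{\delta\hEsh}{\IVh(\delta\huvh\times\PiEhRT\Bvh)}$ and $\theta\ScalVh{\IVh(\huvh\times\PiEhRT\delta\Bvh)}{\delta\hEsh}$, which are sign-indefinite. Using the $L^\infty$ hypotheses on $\huvh,\Bvh,\hEsh$, the $L^2$-boundedness of $\PiEhRT$ and the standard $L^2$-stability of the interpolation operator $\IVh$, together with the norm equivalences of Corollaries~\ref{cor:EquivalentNormsTVh}, \ref{Cor:EhEquivNormsGlobal} and~\ref{Coro:EquivalentInProdsVh(P)}, each such term is estimated by a product of $L^2$-norms such as $\Cs\NORM{\Bvh}{L^\infty(\Omega)}\NORM{\delta\huvh}{0,\Omega}\TNORM{\delta\hEsh}{\Vh}$. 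Because the velocity and magnetic norms enter~\eqref{eq:norm-Xh:def} with the weight $\Dt^{-1}$, one has $\NORM{\delta\huvh}{0,\Omega}\leq\Dt^{1/2}\GradNorm{\delta\huvh}$ and $\NORM{\delta\Bvh}{0,\Omega}\leq\Dt^{1/2}\DivNorm{\delta\Bvh}$, so every coupling term carries a factor $\Dt^{1/2}$ or $\Dt$. Consequently, for $\Dt$ below a threshold depending only on $\theta$, $R_e$, $R_m$ and the $L^\infty$ data, Young's inequality absorbs them into the symmetric part, yielding $\ashzr(\delta\xivh,\delta\xivh)\geq c_1\big(\Dt^{-1}\TNORM{\delta\huvh}{\TVh}^2+\snorm{\delta\huvh}{\TVh}^2+\Dt^{-1}\TNORM{\delta\Bvh}{\Eh}^2+\TNORM{\DIV\delta\Bvh}{\Ph}^2+\TNORM{\delta\hEsh}{\Vh}^2\big)$.

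The diagonal choice gives no control of $\ROTv\delta\hEsh$, so I would correct it by setting $\etavh=\delta\xivh+\zeta\,(0,\Dt\,\ROTv\delta\hEsh,0)$ for a small $\zeta>0$; note that $\Dt\,\ROTv\delta\hEsh\in\Eh$ by Lemma~\ref{lem:ElectroMagDeRhamImageOfROT} and that $\DIV(\Dt\,\ROTv\delta\hEsh)=0$, so the correction lies in $\GXhzr$ and is annihilated by the augmentation term of~\eqref{eq:defashzr}. Testing the Faraday block against this correction produces, up to the fixed factor $\theta R_m^{-1}$, the quantity $\Dt\TNORM{\ROTv\delta\hEsh}{\Eh}^2+\ScalEh{\delta\Bvh}{\ROTv\delta\hEsh}$; writing $\ScalEh{\delta\Bvh}{\ROTv\delta\hEsh}=\ScalEh{\Dt^{-1/2}\delta\Bvh}{\Dt^{1/2}\ROTv\delta\hEsh}$ and applying Young, the $\Dt^{-1}\TNORM{\delta\Bvh}{\Eh}^2$ part is absorbed into the corresponding term of the previous step (whose coefficient $c_1$ is $\zeta$-independent) and the $\Dt\TNORM{\ROTv\delta\hEsh}{\Eh}^2$ part is halved. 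Choosing $\zeta$ small relative to $c_1$ preserves the Step-1 estimates, so $\ashzr(\delta\xivh,\etavh)\geq c_2\TNORM{\delta\xivh}{\GXh}^2$. Finally $\TNORM{(0,\Dt\,\ROTv\delta\hEsh,0)}{\GXh}^2=\DivNorm{\Dt\,\ROTv\delta\hEsh}^2=\Dt\TNORM{\ROTv\delta\hEsh}{\Eh}^2\leq\CurlNorm{\delta\hEsh}^2\leq\TNORM{\delta\xivh}{\GXh}^2$, whence $\TNORM{\etavh}{\GXh}\leq(1+\zeta)\TNORM{\delta\xivh}{\GXh}$, and dividing gives the inf-sup constant $\Cs=c_2/(1+\zeta)$.

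The main obstacle is the treatment of the frozen-state coupling terms in Step~1: unlike in the energy estimate of Theorem~\ref{Thm:DiscNonlinearEnEst}, where the analogous products telescope exactly, here they do not cancel, and the estimate closes only because the fields $\huvh,\Bvh,\hEsh$ lie in $L^\infty$ and the $\Dt^{-1}$-weighting of the velocity and magnetic norms turns each coupling term into an $O(\Dt^{1/2})$ perturbation. The rest is bookkeeping: checking that the finitely many $\Dt$-thresholds produced by the various applications of Young's inequality in Steps~1 and~2 are mutually compatible, and that all constants depend only on $\theta$, $R_e$, $R_m$ and the $L^\infty$ bounds but not on $\hh$.
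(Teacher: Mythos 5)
The paper never writes out a proof of Lemma~\ref{Thm:ahzrInfSup}: it only states the result, deferring the argument to Chapter~5 of \cite{Sebas2021} and to the framework of \cite{Hu-Ma-Xu:2017}, so there is no in-paper proof to compare with step by step. Your construction is precisely the Hu--Ma--Xu-type argument that this framework calls for: coercivity of the diagonal test on $\GXhzr$ (where the $\Dt^{-1}\TNORM{\DIV\delta\huvh}{\Ph}^2$ piece of \eqref{eq:norm-Xh:def} drops out), absorption of the frozen-coefficient coupling terms using the $L^\infty$ hypotheses and the $\Dt^{-1}$ weights for $\Dt$ small, and then the correction $\etavh=\delta\xivh+\zeta(0,\Dt\,\ROTv\delta\hEsh,0)$ -- admissible by Lemma~\ref{lem:ElectroMagDeRhamImageOfROT}, divergence free, hence invisible to the augmentation term in \eqref{eq:defashzr} -- to recover the $\Dt\TNORM{\ROTv\delta\hEsh}{\Eh}^2$ part of \eqref{eq:CurlNorm:def}, with the test-function norm controlled by $(1+\zeta)\TNORM{\delta\xivh}{\GXh}$. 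As laid out, the argument closes and yields a constant depending only on $\theta$, $R_e$, $R_m$ and the $L^\infty$ data.

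Two points should be tightened. First, your Step~1 hinges on the exact cancellation of the curl cross-terms under the diagonal test; this requires the skew pairing $+\theta R_m^{-1}\ScalEh{\ROTv\delta\hEsh}{\Cvh}$ in the rescaled Faraday block \eqref{eq:NewVEMVarFormFaraday} against $-\theta R_m^{-1}\ScalEh{\delta\Bvh}{\ROTv\Dsh}$ obtained by differentiating \eqref{eq:flowVEMVarFormOhmAmpere}. The printed $\ell_3$ and $\ell_4$ in \eqref{eq:Jacobian} do not show this structure (no $\theta R_m^{-1}$ factor on $\ell_3$, a plus sign on the curl term of $\ell_4$); these are almost certainly typos, but if the $(\Bv,\Es)$ coupling were genuinely symmetric rather than skew, the diagonal test would leave an $O(1)$ indefinite term that your small-$\zeta$ correction could not absorb, so you should state explicitly the sign and scaling convention under which you work. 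Second, the ``standard $L^2$-stability of $\IVh$'' invoked to bound terms such as $\ScalVh{\hEsh}{\IVh(\delta\huvh\times\PiEhRT\delta\Bvh)}$ is not a standard fact: $\IVh$ is a vertex-interpolation operator, and an estimate of the form $\TNORM{\IVh(\vvh\times\PiEhRT\Bvh)}{\Vh}\leq \Cs\NORM{\Bvh}{L^{\infty}(\Omega)}\NORM{\vvh}{0,\Omega}$ needs a discrete norm-equivalence or inverse-estimate argument on meshes satisfying \textbf{(M1)}--\textbf{(M2)} (the same ingredient is implicitly needed for Lemma~\ref{Thm:ahzrcontinuous} and for Theorem~\ref{Thm:DiscNonlinearEnEst}); spelling this out is what guarantees the final constant is independent of $\hh$.
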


\section{Numerical Experiments}
\label{sec:NumericalExperiments}
In this section, we show some numerical results for the approximation
of the subsystem of
\eqref{eq:flowStrongConsMass}-\eqref{eq:flowStrongAmperesLaw} that
describes the electromagnetic part of the MHD model.
\begin{subequations}
  \label{eq:NumericsStrong}
  \vspace{-\baselineskip}
  \begin{align}
    &\label{eq:NumStrongFaraday}
    \frac{\partial}{\partial t}\Bv +\ROTv\Es = \zrv\quad\mbox{in}\quad\Omega,\\[0.5em]
    &\label{eq:NumStrongAmpereOhm}
    \Es+\uv\times\Bv -R_m^{-1}\ROT\Bv = 0\quad\mbox{in}\quad\Omega,\\
    &\Bv(0) = \Bv_0 \quad \mbox{in}\quad\Omega\\
    &\Es\equiv\Es_b\quad\mbox{along}\quad
    \partial\Omega.
  \end{align}
\end{subequations}
We present an experimental study of the convergence properties of the
VEM and show the performance when we apply the VEM to the numerical
modeling of a magnetic reconnection model.

\subsection{Experimental Study of Convergence}
\label{ subsec:ConvPlots}
The first test that we perform regards the convergence rate of the
VEM.
We consider the computational domain $\Omega = [-1,1]\times[-1,1]$
partitioned by three different mesh families, a triangular mesh, a perturbed quadrilateral mesh and a Voronoi Tessellation.
We assume that an external velocity field $\uv=(\us_x,\us_y)^T$ is
imposed, whose components are
\begin{align}
  \us_x(x,y)&=-\frac{(x^2+y^2-1)(\sin(xy)+\cos(xy))-100e^{x}+100e^{y}}{2(50e^x-y\sin(xy)+y\cos(xy))},\\[0.25em]
  \us_y(x,y)&=\frac{(x^2+y^2-1)(\sin(xy)+\cos(xy))-100e^{x}+100e^{y}}{2(50e^y+x\sin(xy)-x\cos(xy))}.
\end{align}
The initial and the boundary conditions are set in accordance with the
electric and magnetic fields, which we assume as the exact solutions.
\begin{align}
  \Bv(x,y,t) &= 
  \begin{pmatrix}
    50e^y+x\sin(xy)-x\cos(xy)\\[0.25em]
    50 e^x-y\sin(xy)+y\cos(xy)
  \end{pmatrix}e^{-t},\\[0.5em]
  \Es(x,y,t) &= -\big( 50(e^x-e^y)+\cos(xy)+\sin(xy) \big) e^{-t}.
\end{align}

The simulation uses the time discretization given by $\theta = 1/2$
and time step $\Delta\ts= 0.05\hh^2$, and we integrate from $t=0$ to
$t=T$, the final time being $T=0.25$.
We measure the relative errors of $\Es$ and $\Bv$ through the mesh
dependent norms of the difference between the exact and numerical
solutions divided by the norm of the exact solution.
The results are shown in Figure~\ref{fig:test1:convergence-curves}.
In each plot, we show three different convergence curves.
These curves refer to the three different possibilities that we
presented in subsubsection~\ref{subsubsec:ObliqueProj} for the
construction of the inner product in the space $\Vh$.
These plots provide evidence that the convergence rate for the electric
field is quadratic while the convergence rate for the magnetic field
is linear.
In the case of Voronoi tessellations the convergence plots associated
with the inner product defined by the Galerkin interpolator (GI) show
some irregular behavior.
These types of meshes may have arbitrarily small edges conflicting with
the criteria normally used in the VEM.
%
Another possible explanation may have to do with the G.I, note that
this irregular behavior does not happen with the other two sample
inner products.
\begin{figure}[htb]
  \begin{center}
    \begin{tabular}{cc}
      \begin{overpic}[width=.475\textwidth]{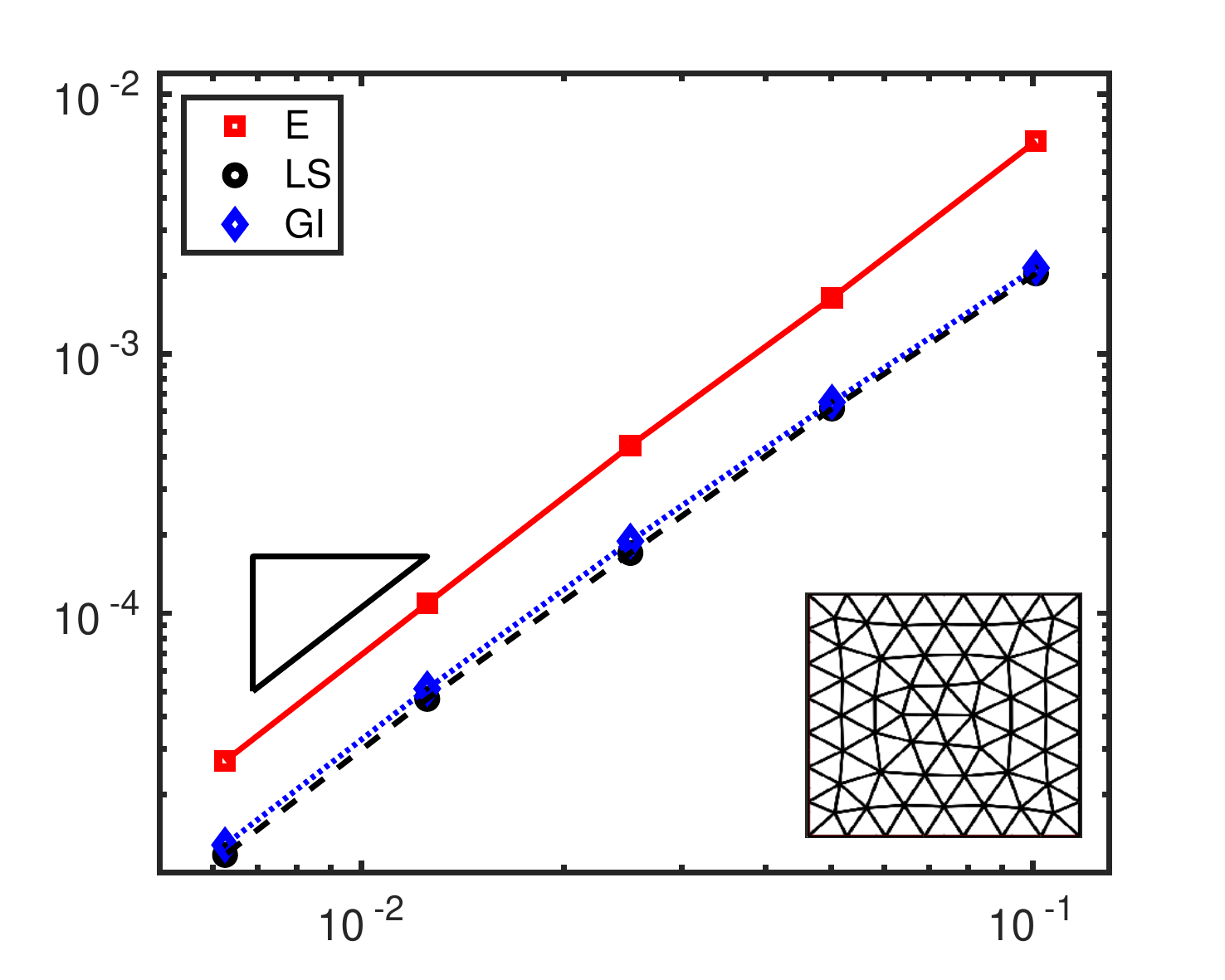} 
       \put( 0,8.5){\begin{sideways}\textbf{Electric field relative error}\end{sideways}}
        \put(40,-2){\textbf{Mesh size $\mathbf{h}$}}
        \put(17,29){\textbf{2}}
        \put(26,36){\textbf{1}}
      \end{overpic}
      & \qquad
      \begin{overpic}[width=.475\textwidth]{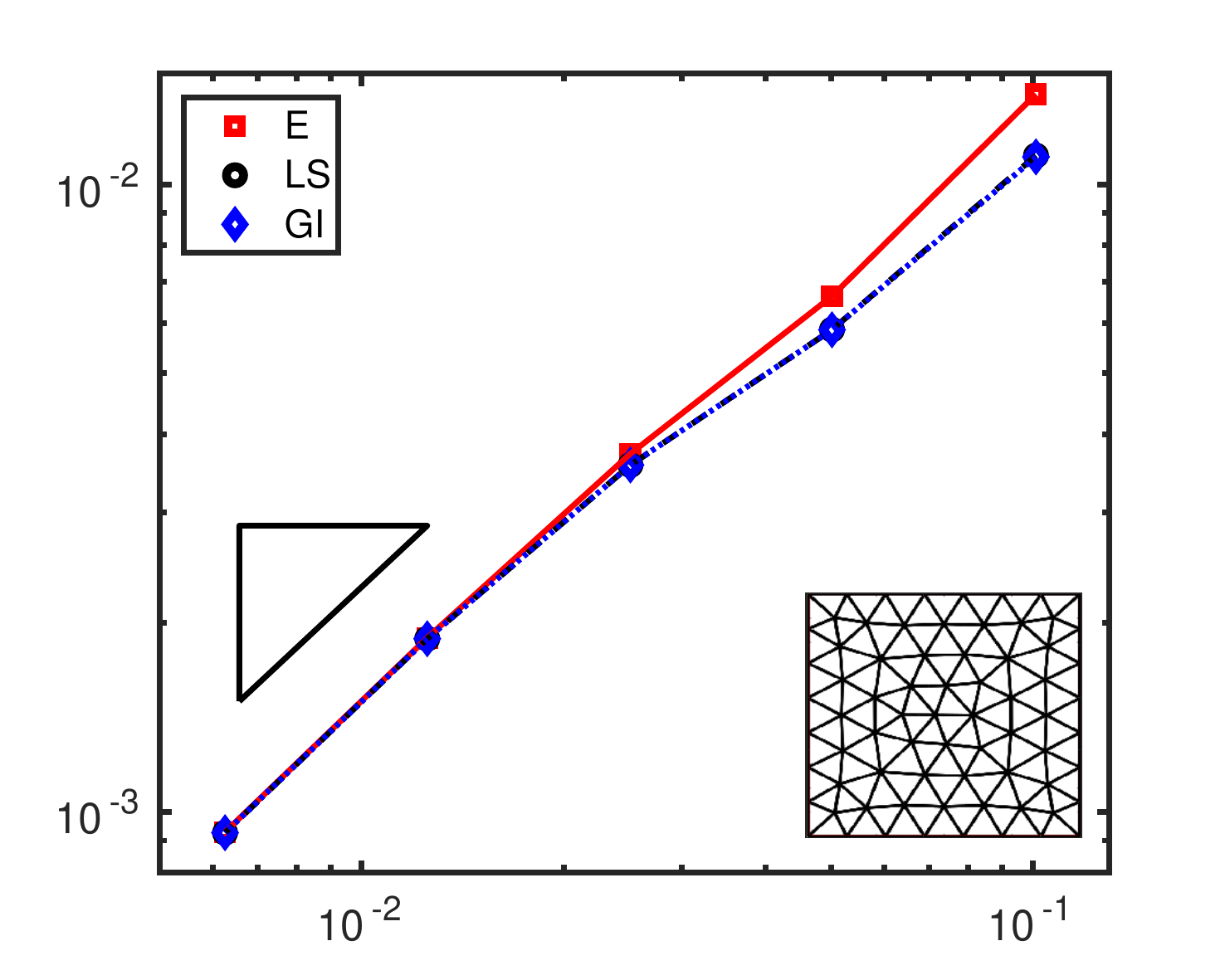} 
        \put( 0,7.5){\begin{sideways}\textbf{Magnetic field relative error}\end{sideways}}
        \put(40,-2) {\textbf{Mesh size $\mathbf{h}$}}
        \put(16,29){\textbf{1}}
        \put(25,38){\textbf{1}}
      \end{overpic}
      \\[1.5em] 
      \begin{overpic}[width=.475\textwidth]{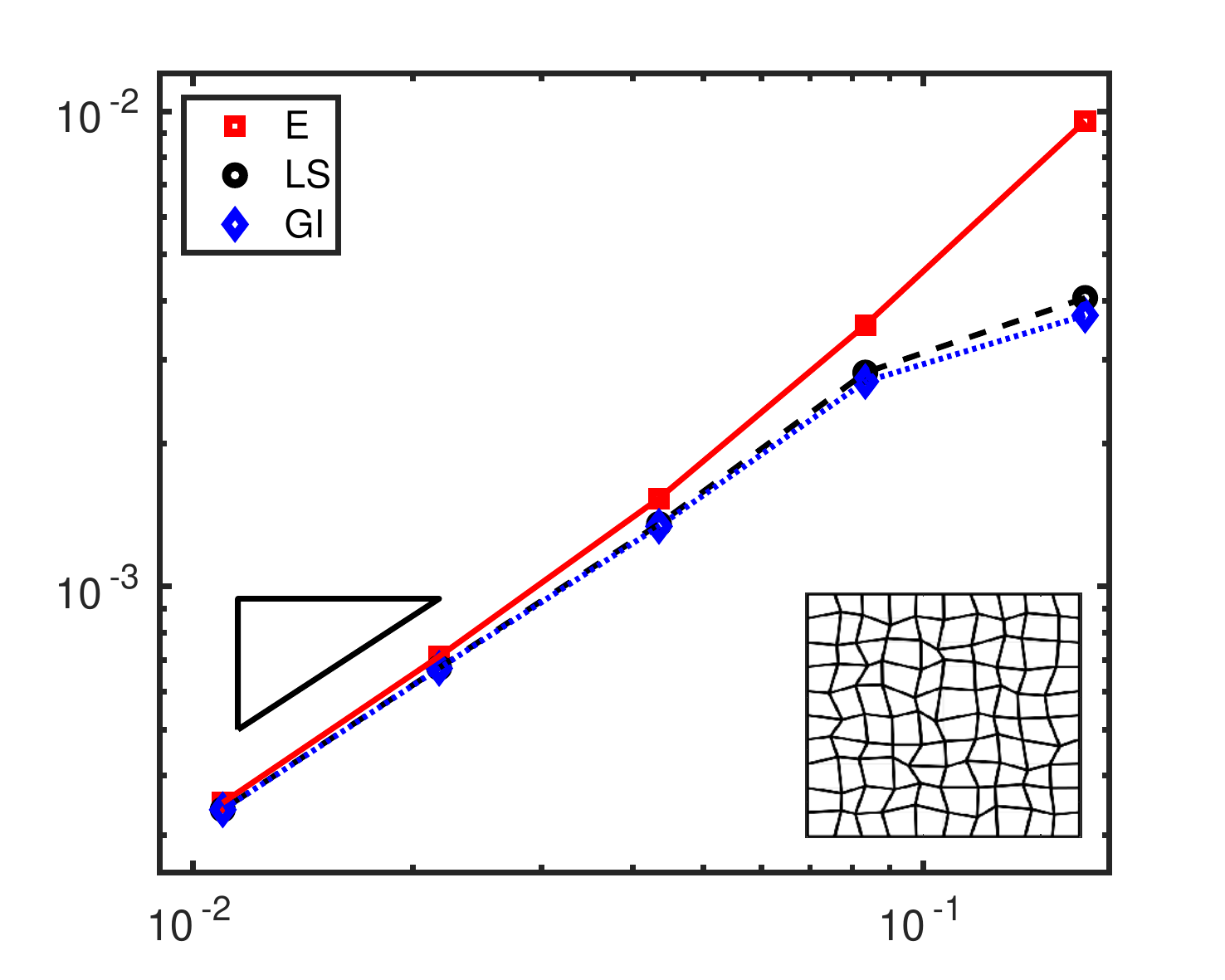} 
       \put( 0,8.5){\begin{sideways}\textbf{Electric field relative error}\end{sideways}}
        \put(40,-2) {\textbf{Mesh size $\mathbf{h}$}}
        \put(16,24){\textbf{2}}
        \put(25,32){\textbf{1}}
      \end{overpic}
      & \qquad
      \begin{overpic}[width=.475\textwidth]{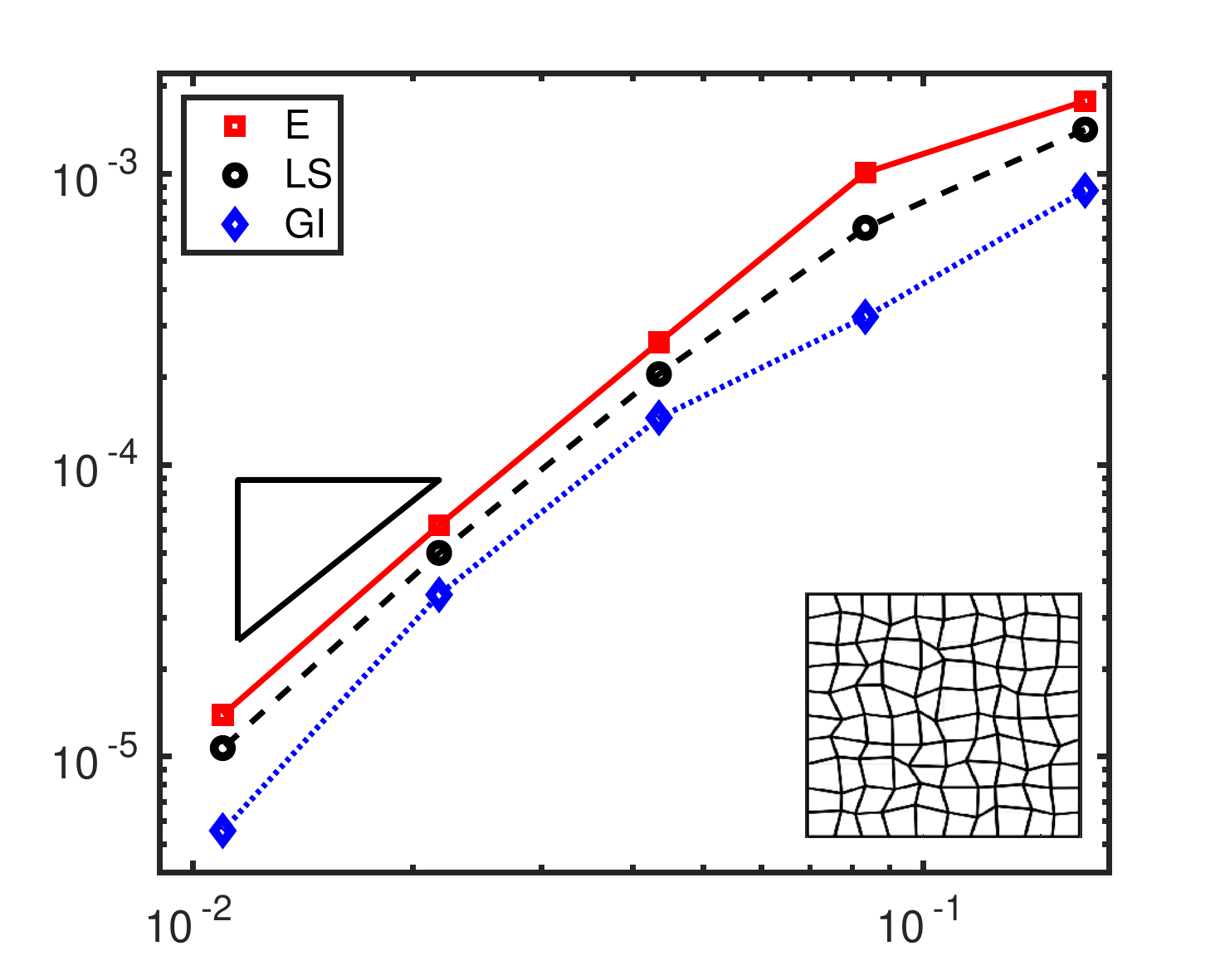} 
       \put( 0,7.5){\begin{sideways}\textbf{Magnetic field relative error}\end{sideways}}
        \put(45,-2) {\textbf{Mesh size $\mathbf{h}$}}
        \put(16,33){\textbf{1}}
        \put(25,43){\textbf{1}}
      \end{overpic}
      \\[1.5em] 
      \begin{overpic}[width=.475\textwidth]{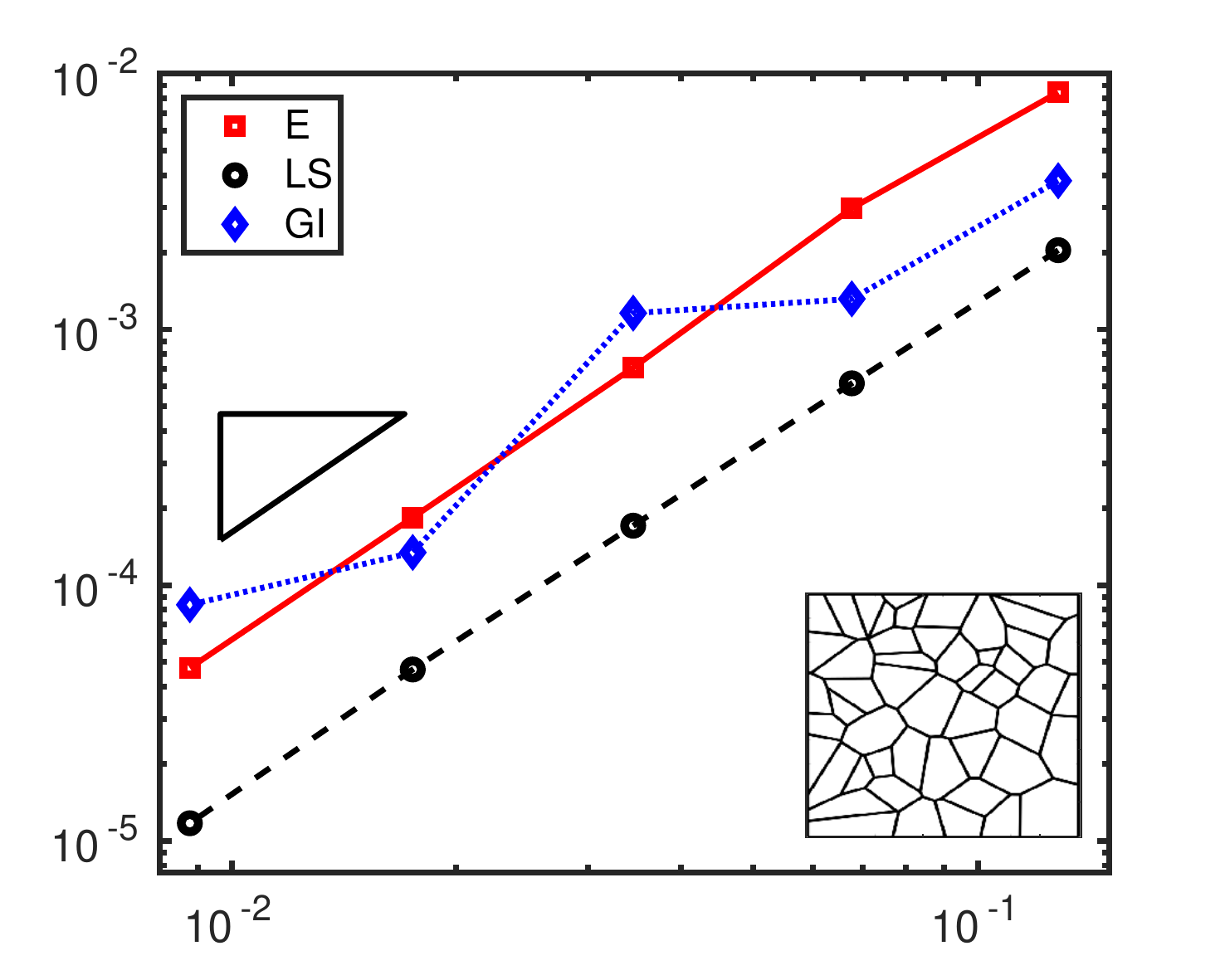} 
       \put( 0,8.5){\begin{sideways}\textbf{Electric field relative error}\end{sideways}}
        \put(40,-2) {\textbf{Mesh size $\mathbf{h}$}}
        \put(15,40){\textbf{2}}
        \put(23,47){\textbf{1}}
      \end{overpic}
      & \qquad
      \begin{overpic}[width=.475\textwidth]{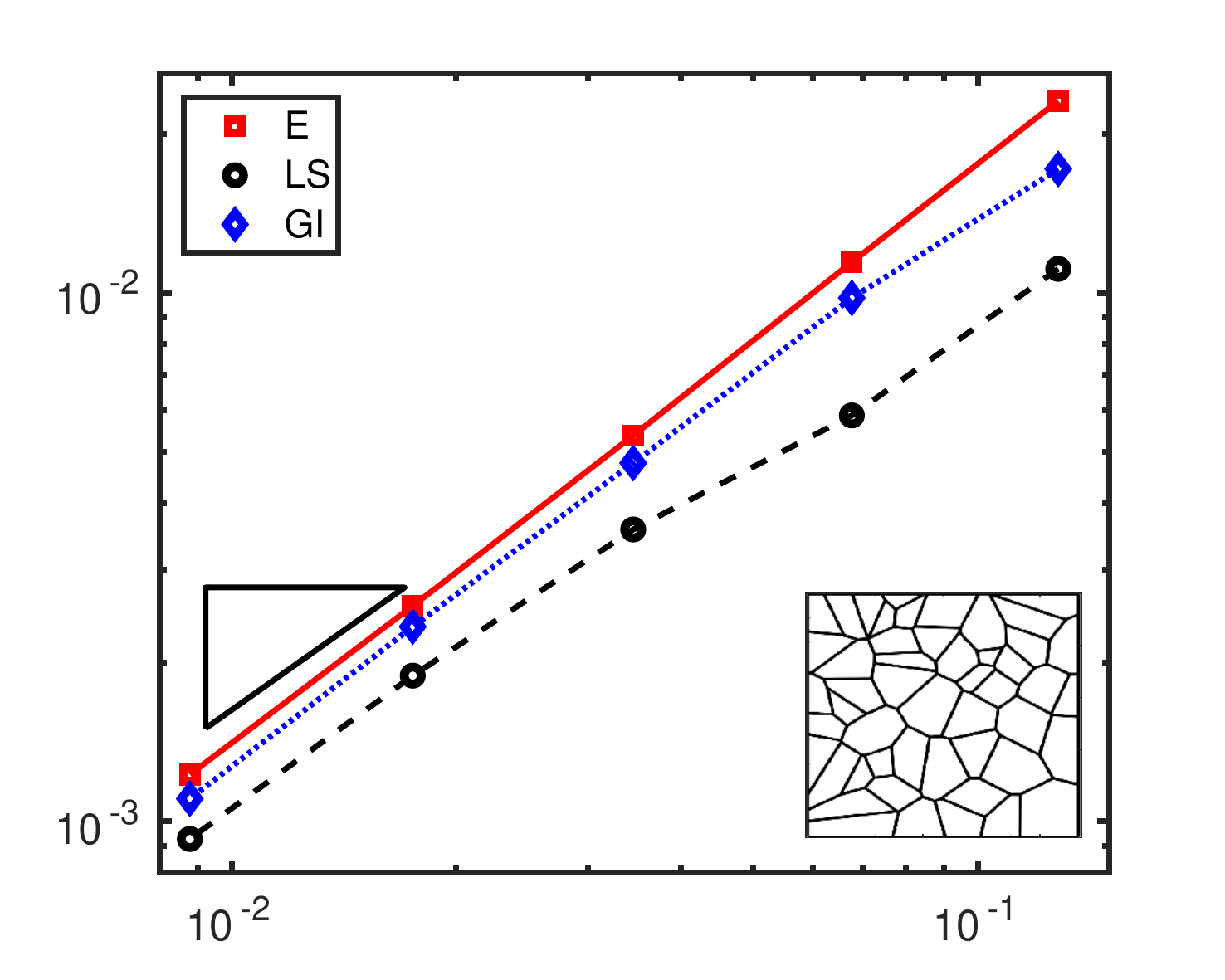} 
       \put( 0,7.5){\begin{sideways}\textbf{Magnetic field relative error}\end{sideways}}
        \put(40,-2) {\textbf{Mesh size $\mathbf{h}$}}
        \put(14,25){\textbf{1}}
        \put(23,33){\textbf{1}}
      \end{overpic}
    \end{tabular}
  \end{center}
  
  \medskip
 \caption{ Convergence plots of the VEM developed for the
   system~\eqref{eq:NumericsStrong}.
   The type of mesh used in the simulation is portrayed in the lower
   right corner of each plot.
   The three convergence curves shown in each plot show the different
   performance between the three possibilities of the inner product in
   the space $\Vh$, see subsubsection~\ref{subsubsec:ObliqueProj}.
   Each of these inner products is associated with a projector, they
   are the elliptic projector (E), the least squares projector (LS)
   and the Galerkin interpolator (GI).}
  \label{fig:test1:convergence-curves}
\end{figure}

An important feature of the VEM is that the divergence of the magnetic
field should remain zero throughout the simulation.
In Figure~\ref{fig:DivPlots} we show plots of the evolution of the
$\LTWO$-norm of the divergence of the magnetic field.
These show that this quantity remains very close, in norm, to the
machine epsilon.
\begin{figure}[htb]
  \begin{center}
    \begin{tabular}{ccc}
      \begin{overpic}[width=.345\textwidth]{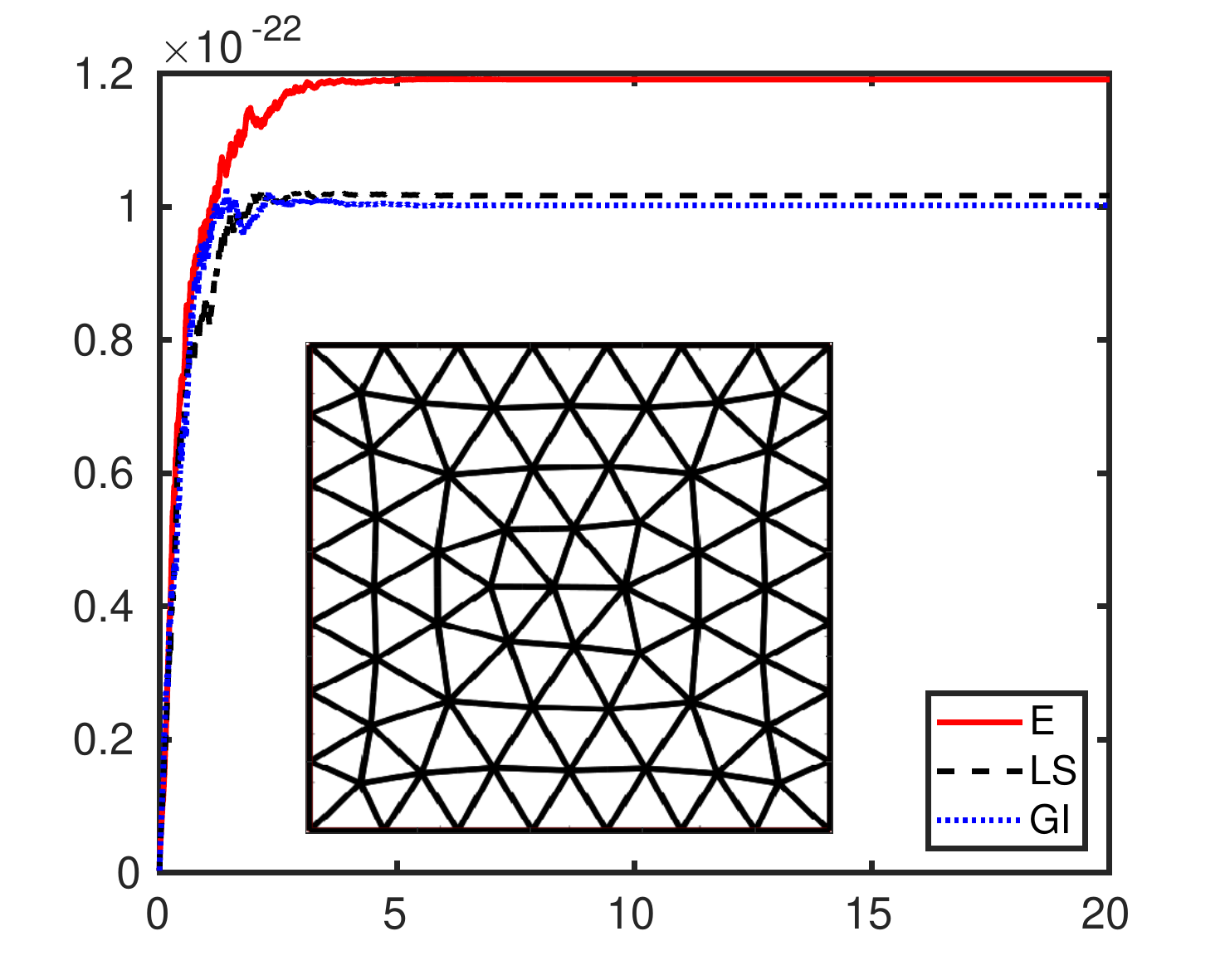} 
        \put(45,-2){{\small\textbf{Time}}}
        \put(-6,25){\begin{sideways}{\small $\TNORM{\DIV\Bvh}{\Ph}^2$}\end{sideways}}
      \end{overpic}
      &
      \hspace{-5mm}
      \begin{overpic}[width=.345\textwidth]{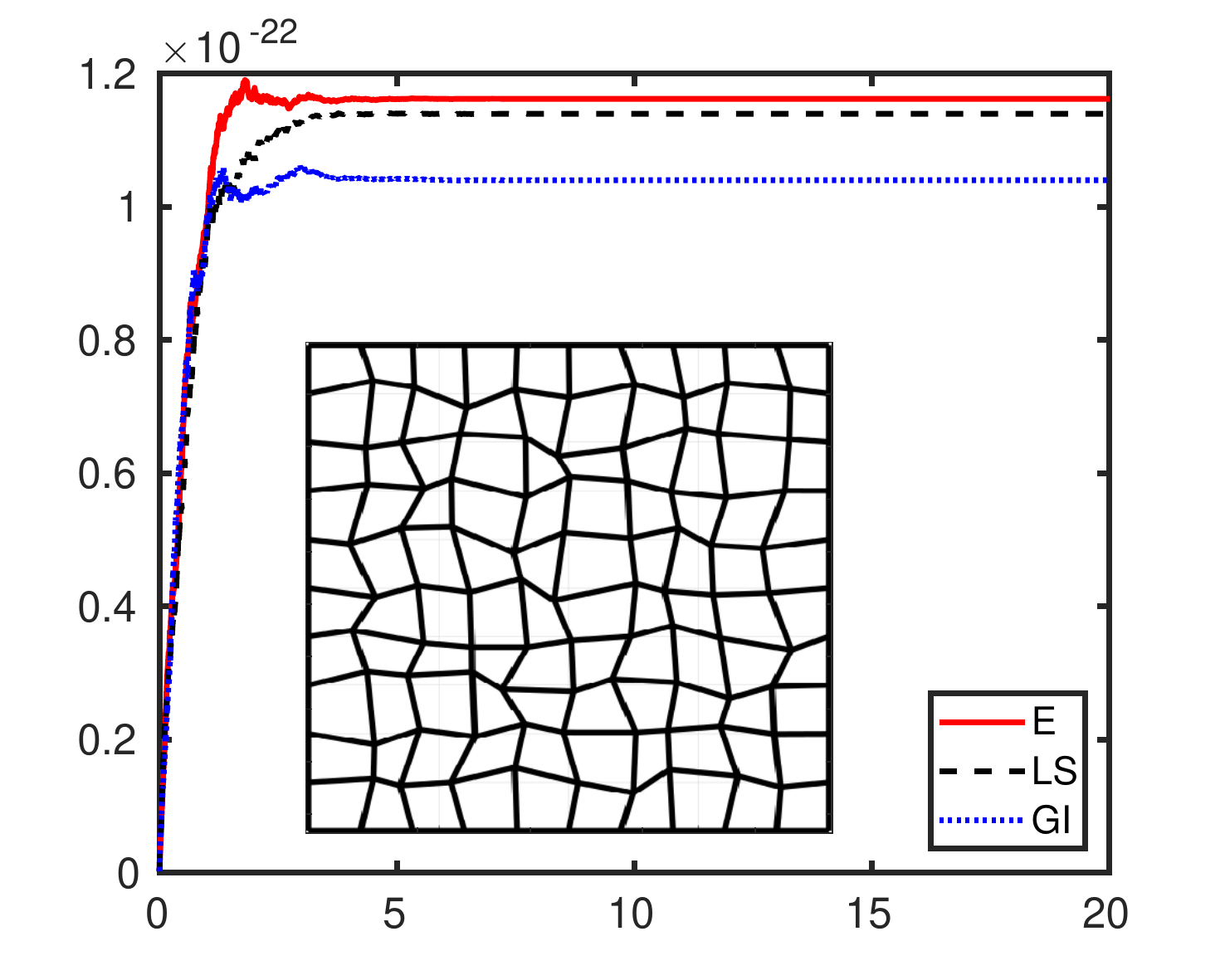} 
        \put(45,-2){{\small\textbf{Time}}}
      \end{overpic}
      &
      \hspace{-5mm}
      \begin{overpic}[width=.345\textwidth]{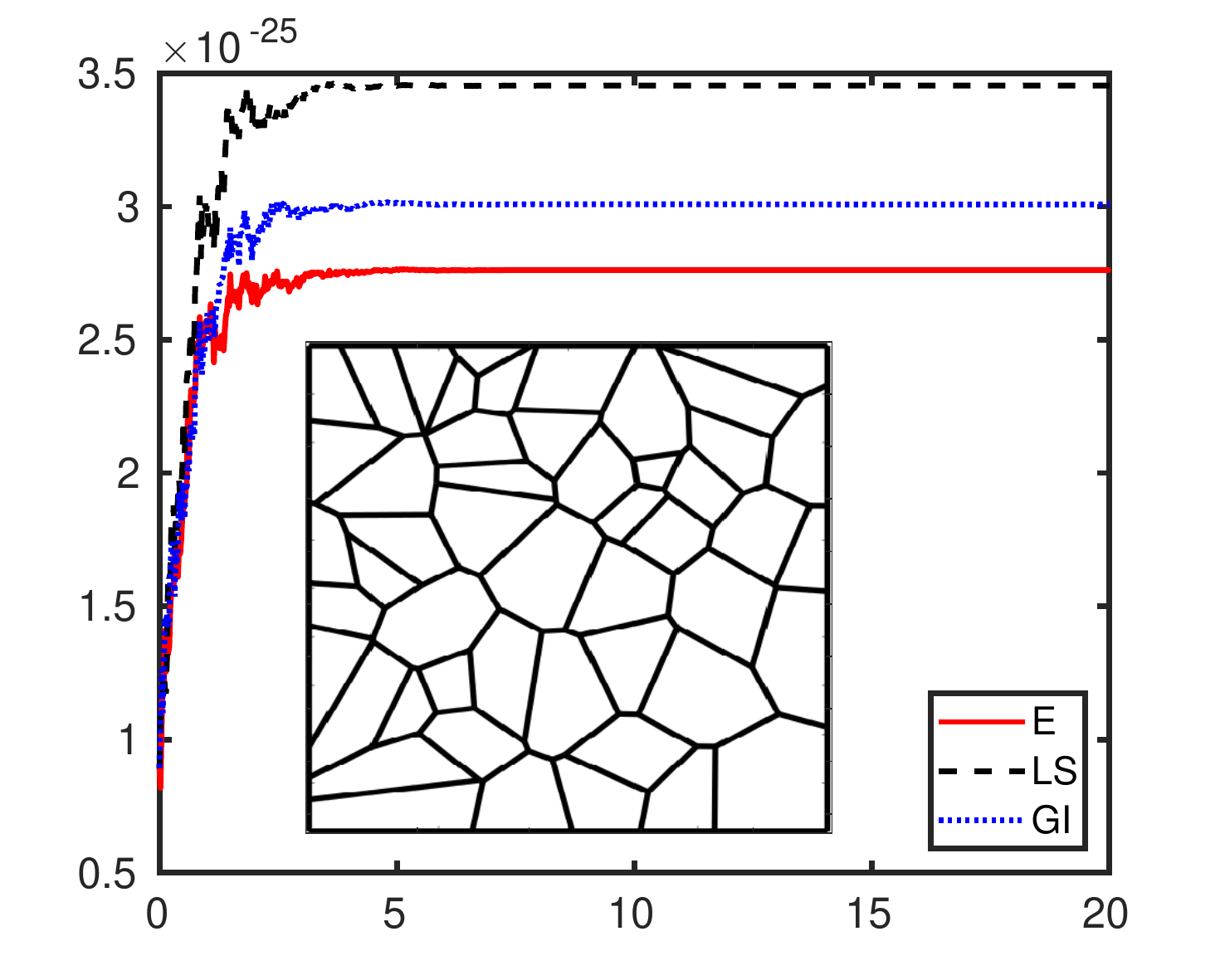} 
        \put(45,-2){{\small\textbf{Time}}}
      \end{overpic}
     \end{tabular}
  \end{center}
  \caption{Plots of the time evolution of the square of the $L^2$ norm
    of the divergence of the numerical magnetic field. We present
    three different types of meshes, these are displayed in the lower
    right hand corner of each plot.
    }
\label{fig:DivPlots}
\end{figure}
\subsection{Magnetic Reconnection}
\label{subsec:MagReconnect}

The next numerical experiment involves a characteristic feature of
resistive MHD -- the phenomenon of magnetic reconnection.
At very large scales, usually in space physics, the behavior of
plasmas can be well-approximated using ideal MHD.
In this case, according to the Alfven's Theorem, the magnetic field
lines will advect with the fluid, see Section 4.3 in
\cite{Davidson:2002} for a full discussion.
This feature is often referred to as the ``frozen-in'' condition on
the magnetic field.
In certain regions of the Earth's magnetosphere, namely the
magnetopause and magnetotail, the magnetic reconnection will lead to
very thin current sheets that separate regions across which the
magnetic field changes substantially.

In this numerical experiment, we consider one Harris sheet 
constrained to the computational domain $\Omega = [-1,1]^2$ and given by 
the following profile of the magnetic field,
first introduced in \cite{Harris:1962},
\begin{equation}\label{eq:Harris sheet}
  \Bv_0(x,y) = (\tanh{y},0).
\end{equation}
The simplicity of this condition has made it a popular choice in
modeling magnetic reconnection.  We will use the expression
\eqref{eq:Harris sheet} as the initial condition for the magnetic
field.
We will further assume that the particles in this sheet are subjected
by some external agent to a flow described by
\begin{equation}
  \uv(x,y,t) =(-x,y).
\end{equation}
This flow will force the magnetic field lines to come together at a
single point making the current density grow.
The system becomes highly unstable and the magnetic field lines begin
to tear apart, this phenomenon is called a tearing instability and
magnetic reconnection happens as a response.
This process is described in detail in
\cite{%
Kivelson-Russell:1995,%
Schindler:2006%
}.
We close this model by imposing the boundary conditions
\begin{equation}
  \forall t>0:\quad\Es_b(t)\in\PS{0}(\partial\Omega),
  \quad\mbox{and}
  \quad \int_{\partial\Omega}\Bv_b(t)\cdot\nv d\ell = 0
\end{equation}
The mesh we are using is refined near the center of the domain
$\Omega$, to guarantee higher resolution in the region of space where
the phenomenon of magnetic reconnection occurs.
The downside of using such a mesh is that a series of hanging nodes
are introduced.
This numerical experiment demonstrates the versatility of the VEM, and
the advantage of this method over more classical methods like the FEM
or FDM.

\medskip
In Fig.~\ref{fig:MagReconnectSet} we display the mesh used along with
a set of frames displaying the evolution in time of the magnetic
field. The phenomenon of magnetic reconnection begins at $T = 0$ and
by $T= 0.450$ a steady state is achieved.

\begin{figure}[htb]
  \begin{center}
    \begin{tabular}{cc}
      \begin{overpic}[width=.33\textwidth]{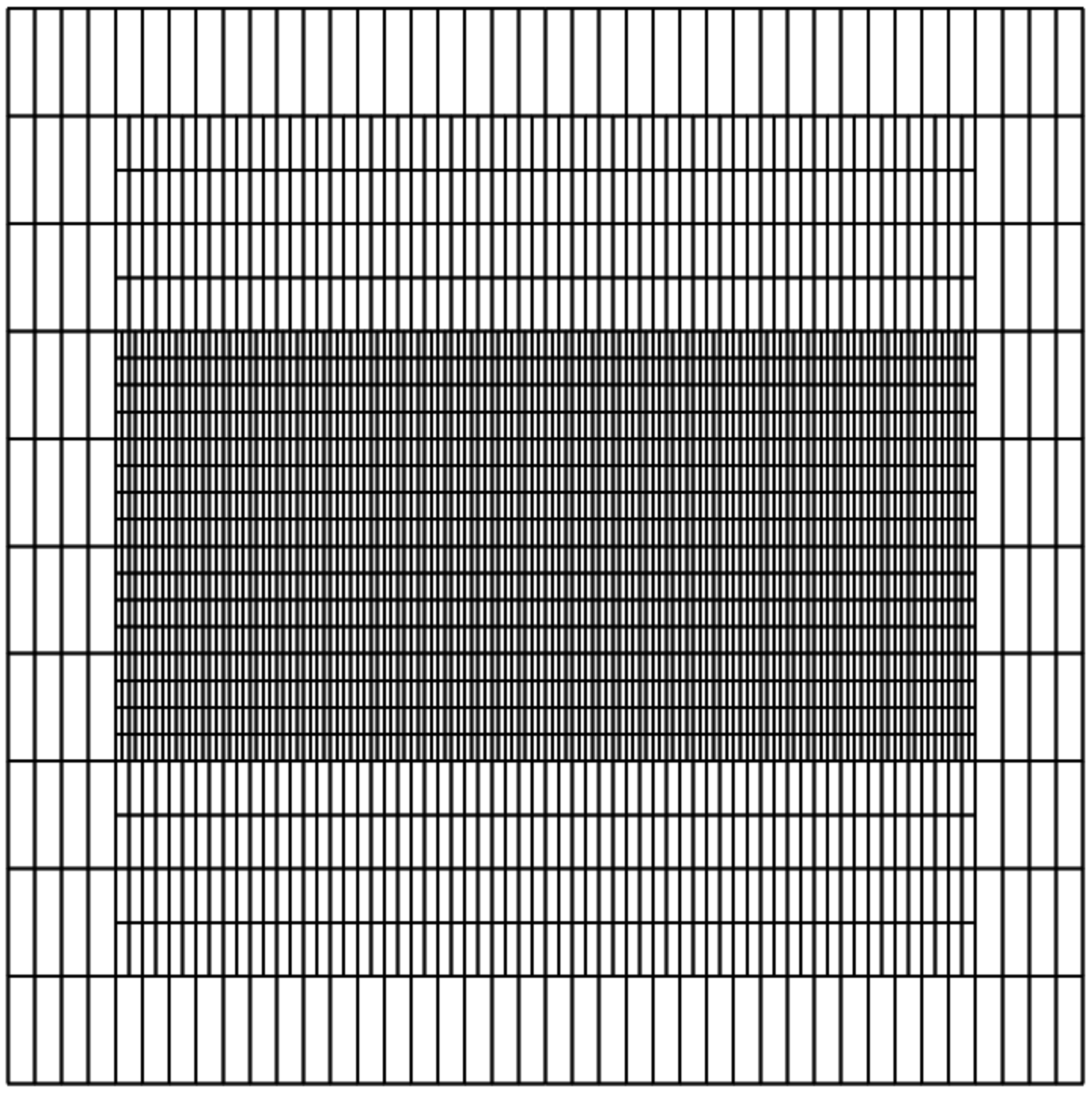} 
        \put(35,105){\textbf{Mesh Type}}
      \end{overpic}
      & \quad
      \begin{overpic}[width=.45\textwidth]{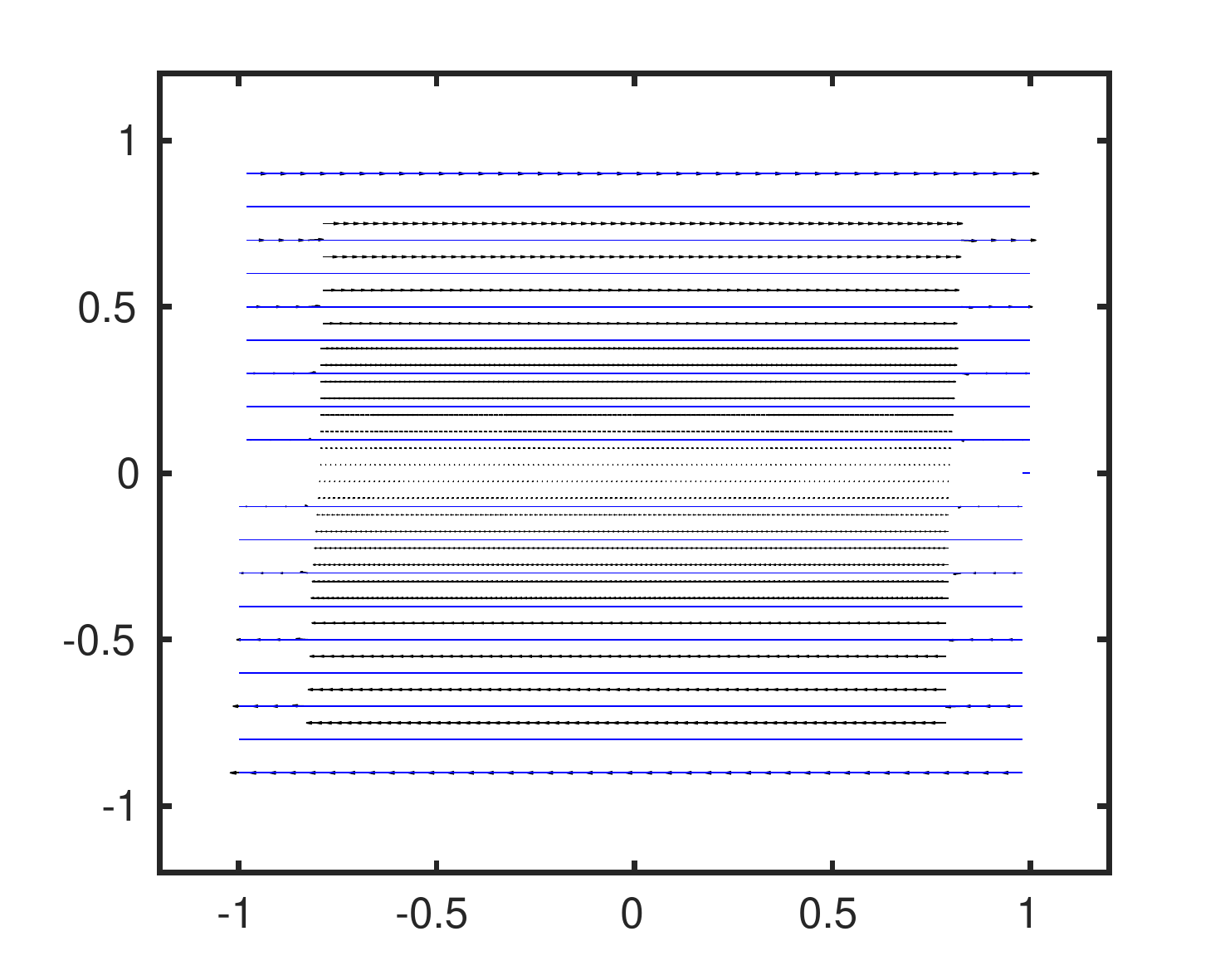} 
        \put(41,75){\textbf{T = 0}}
        \put(45,-3){\textbf{x-axis}}
        \put(-2,34){\begin{sideways}
            \textbf{y-axis}
        \end{sideways}}
      \end{overpic}\\
      \vspace{-5mm} \\[1.5em] 
      \begin{overpic}[width=.45\textwidth]{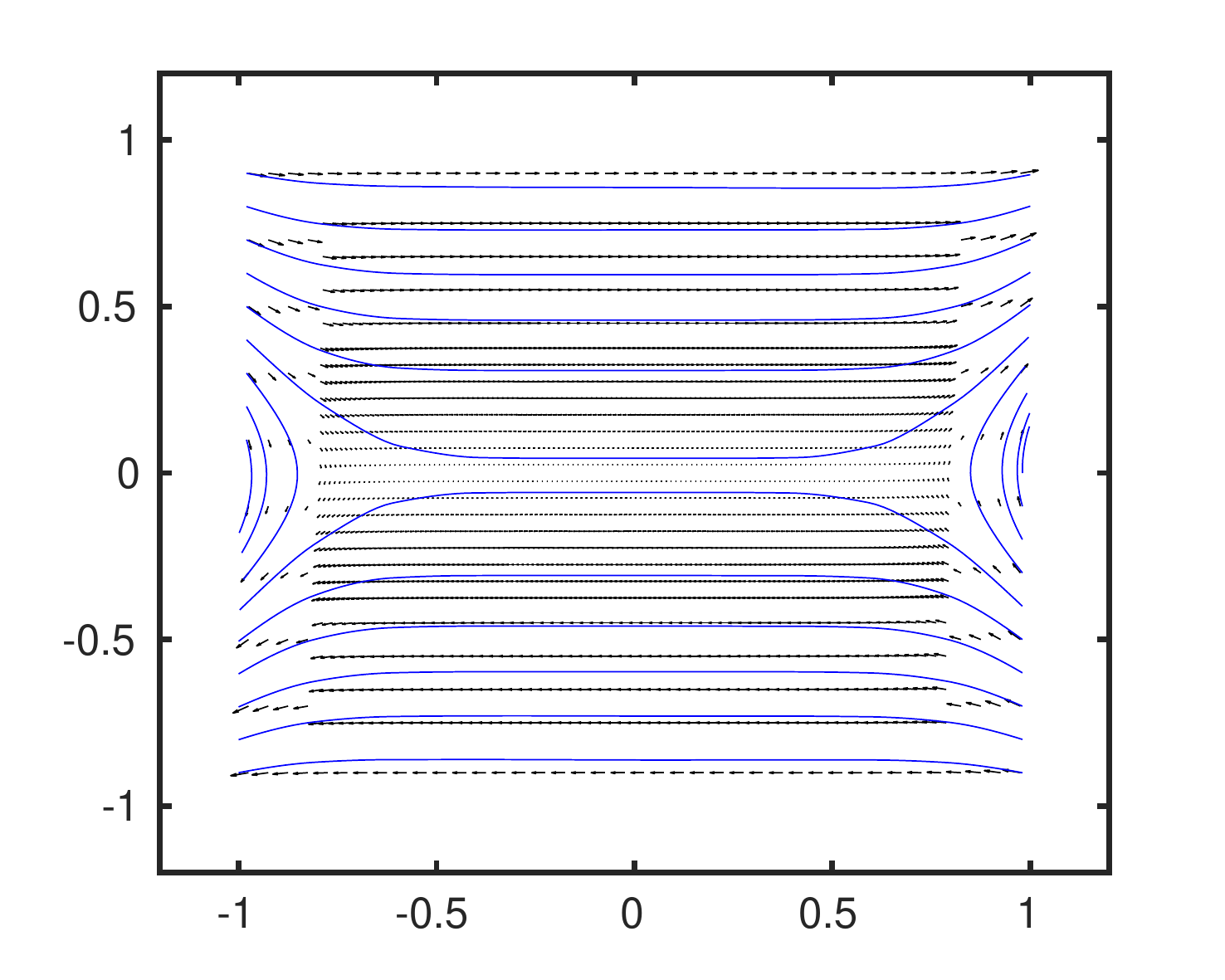} 
        \put(41,75){\textbf{T = 0.021}}
        \put(45,-3){\textbf{x-axis}}
        \put(-2,34){\begin{sideways}
            \textbf{y-axis}
        \end{sideways}}
      \end{overpic}
      & \quad
      \begin{overpic}[width=.45\textwidth]{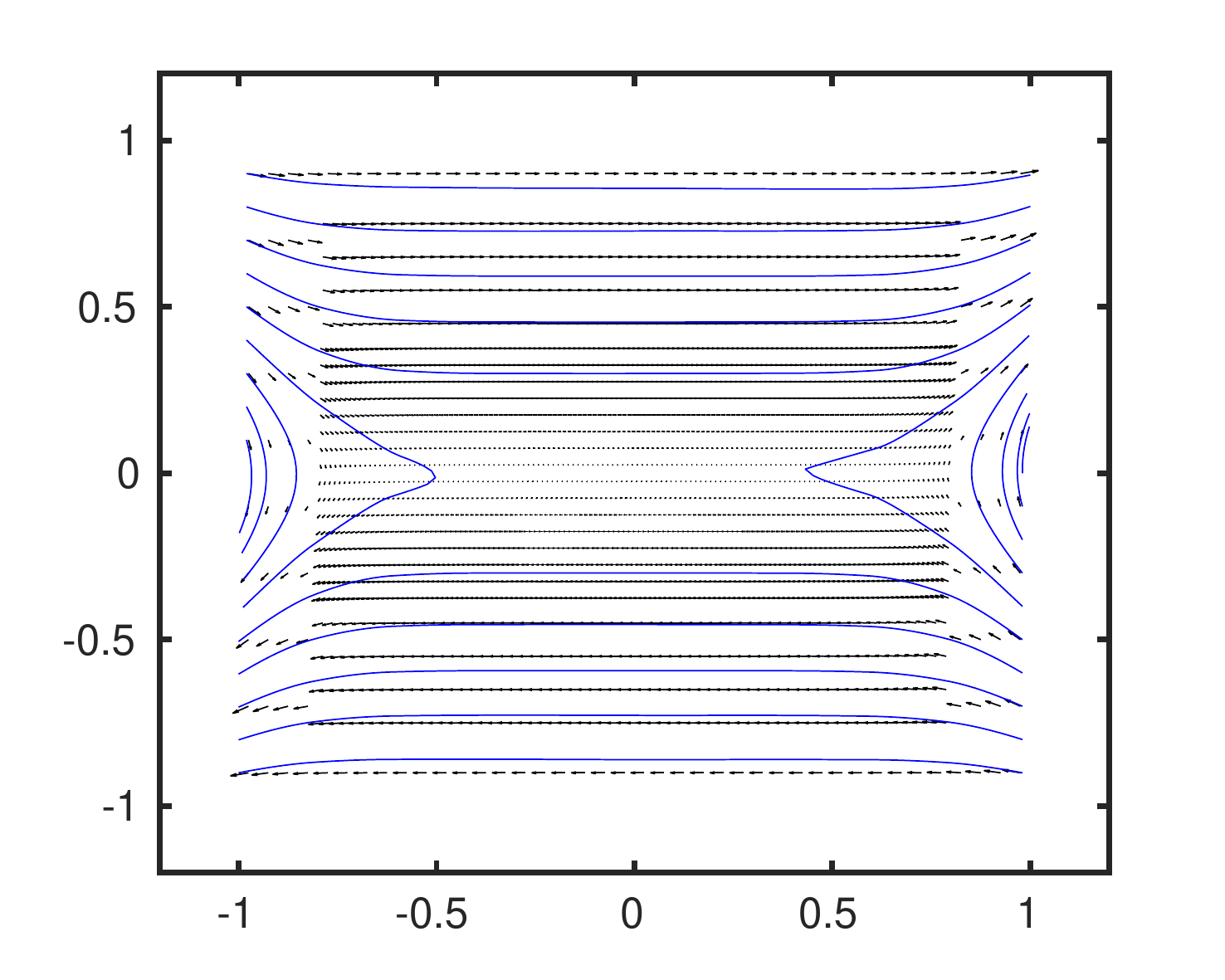} 
        \put(41,75){\textbf{T = 0.022}}
        \put(45,-3){\textbf{x-axis}}
        \put(-2,34){\begin{sideways}
            \textbf{y-axis}
        \end{sideways}}
      \end{overpic}\\
      \vspace{-5mm} \\[1.5em] 
      \begin{overpic}[width=.45\textwidth]{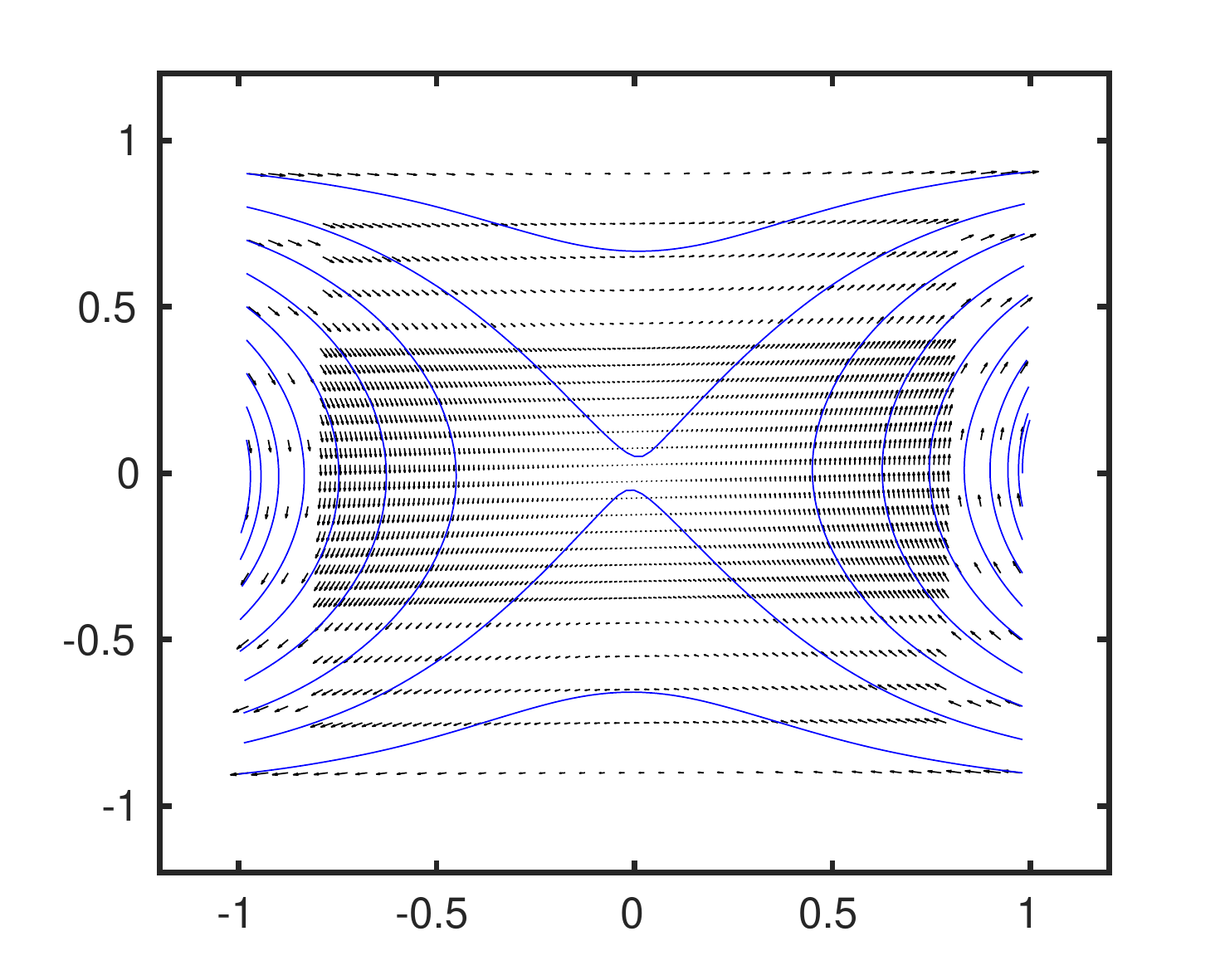} 
        \put(41,75){\textbf{T = 0.410}}
        \put(45,-3){\textbf{x-axis}}
        \put(-2,34){\begin{sideways}
            \textbf{y-axis}
        \end{sideways}}
      \end{overpic}
      & \quad
      \begin{overpic}[width=.45\textwidth]{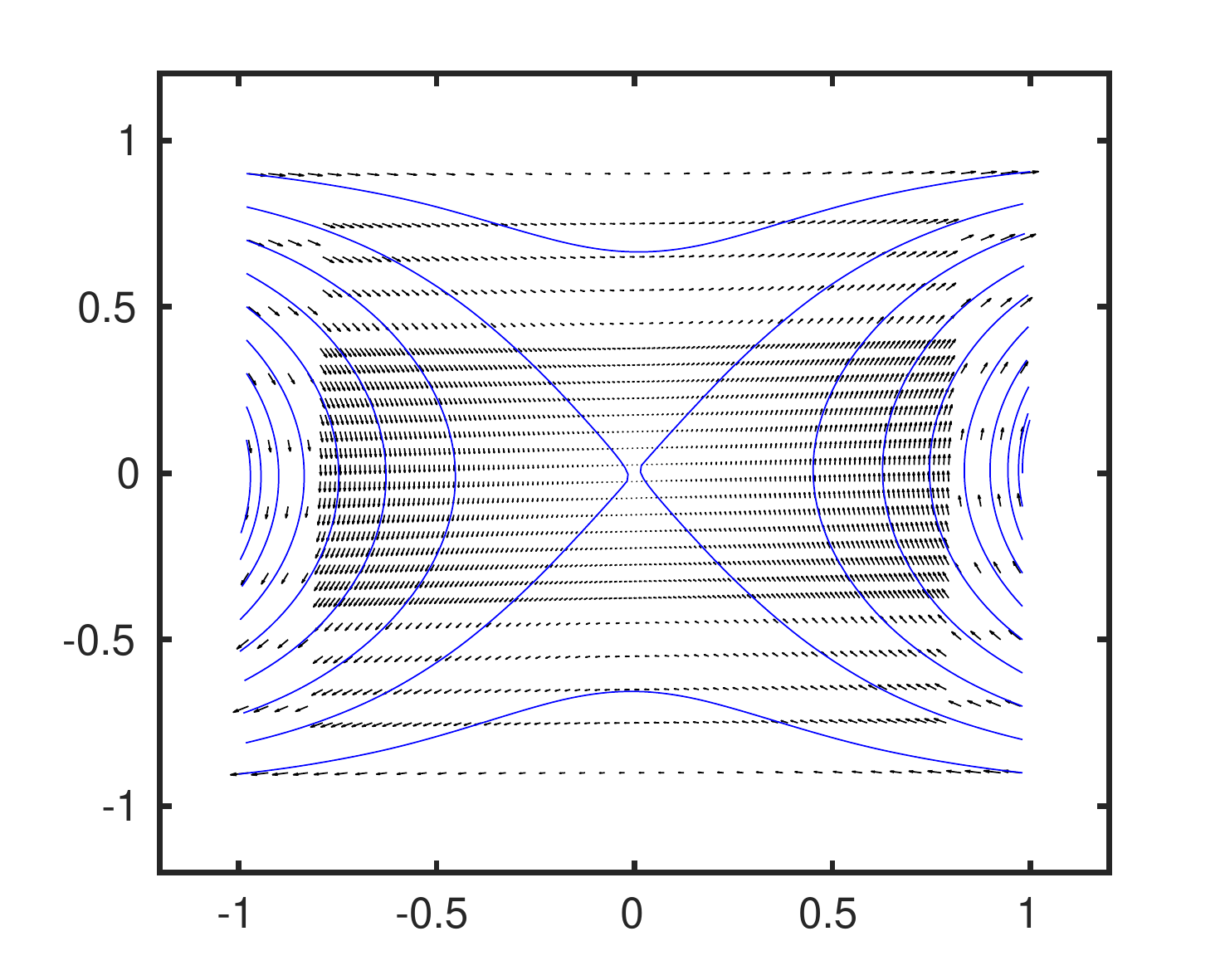} 
        \put(41,75){\textbf{T = 0.450}}
        \put(45,-3){\textbf{x-axis}}
        \put(-2,34){\begin{sideways}
            \textbf{y-axis}
        \end{sideways}}
      \end{overpic}
    \end{tabular}
  \end{center}
  \caption{Frames displaying the evolution, in time, of the magnetic
    field. The phenomenon of magnetic reconnection begins right away
    and by T= 0.450 a steady state is achieved.}
  \label{fig:MagReconnectSet}
\end{figure}
\section{Conclusions}\label{sec:Conclusions}

In this chapter, we developed a VEM for the PDE system of resistive MHD.
In developing this chapter we have introduced two chains of spaces,
see subsections~\ref{subsec:ElectroMagDeRham} and the final
results in \ref{subsec:ElementsFluidFlow}.
One chain of spaces is aimed at approximating the electromagnetics submodel, while the other applies to the  submodel for fluid flow.
There are terms in the MHD equations that couple the two submodels that require information about both the phenomena of electromagnetics and fluid flow. 
Special care needs to be taken in discretizing these terms so that the fully discretized MHD system satisfies discrete (stability) energy estimates.
These estimate are the main result of
section~\ref{sec:EnergyEstimate}.
They guarantee the stability of the method.

The VEM, when applied to MHD, yields a large system of non-linear
equations.
In order to arrive at approximate solutions a linearization strategy has to be developed.
In section~\ref{sec:Linearization} we developed a Newton iteration to
address this issue.
We were able to prove that this linearization strategy will preserve
the divergence of the magnetic field such that if the initial
conditions are divergence free then these non-linear iterations will preserve this divergence free property in the discrete mesh. 
The analysis of section~\ref{sec:WellPosedness} shows that the set of
linear systems that need to be solved are, in fact, well-posed saddle
point problems.
This well-posedness result serves as a first step into developing
robust preconditioners following the framework presented in
\cite{Loghin-Wathen:2004}.

In section~\ref{sec:NumericalExperiments} we also presented a series
of numerical experiments exploring the subsystem that describes the
electromagnetics.
These experiments show that the (lowest-order) VEM is convergent, the speed of
convergence is quadratic for the electric field and linear for the
magnetic field.
Further experimentation shows that the divergence of the magnetic
field remains well below machine epsilon.
We also present a model for a phenomenon characteristic of resistive MHD, that of magnetic reconnection.
This model accurately describes the behavior of plasmas in tokamaks as well as many electromagnetic interactions in the magnetosphere of a planet.
Our compatible discretization closely mimics the behavior of the exact solution to our model.
This model was discretized on a mesh that was refined near the center of the computational domain where reconnection will happen.
Using this mesh will provide higher resolution in the parts of the
domain where such resolution is required and aids in saving computational resources.
Other numerical methods struggle with this type of mesh because it
introduces a series of hanging nodes.
However, the VEM performs just as well in this type of mesh.

\section*{Acknowledgement}
Dr. S. Naranjo Alvarez's work was supported by the National Science
Foundation (NSF) grant \#1545188, ``\emph{NRT-DESE: Risk and
uncertainty quantification in marine science and policy}'', which
provided a one year fellowship and internship support at Los Alamos
National Laboratory.
Dr. S. Naranjo Alvarez also received graduate research funding from
Professor V. A. Bokil's DMS grant \#1720116 and \# 2012882, an INTERN
supplemental award to Professor Bokil's DMS grant \# 1720116 for a
second internship at Los Alamos National Laboratory, and teaching
support from the Department of Mathematics at Oregon State University.
In addition, S. Naranjo Alvarez was also supported by the DOE-ASCR AM
(Applied Math) base program grant for a summer internship.
  
Professor V. A. Bokil was partially supported by NSF funding from the
DMS grants \# 1720116 and \# 2012882.
  
Dr. V. Gyrya and Dr. G. Manzini were supported by the LDRD-ER program
of Los Alamos National Laboratory under project number 20180428ER.

The authors would like to thank Dr. K. Lipnikov and Dr. L. Chacon, T-5
Group, Theoretical Division, Los Alamos National Laboratory, for their
advice during the writing of this article.
  
Los Alamos National Laboratory is operated by Triad National Security,
LLC, for the National Nuclear Security Administration of
U.S. Department of Energy (Contract No. 89233218CNA000001).



\end{document}